\providecommand{\keywords}[1]
{
  \small	
  \textbf{\textit{Keywords --- }} #1
}
\providecommand{\MSC}[1]
{
  \small	
  \textbf{\textit{MSC:}} #1
}
\renewcommand{\epsilon}{\varepsilon}
\def\geq{\geqslant}
\def\ge{\geqslant}
\def\leq{\leqslant}
\def\le{\leqslant}
\def\kratno{\lower.5ex\hbox{$\,\vdots\,$}}
\def\q#1.{\smallbreak\noindent\hskip15pt{\bf#1.}\enspace\ignorespaces} %номер утверждения (жирный)
\def\dotline{\smallskip\hbox to \hsize{\dotfill}\medskip}
\def\norm[#1]{\| #1 \|}
\newcommand{\Z}{\mathbb{Z}}
\newcommand{\N}{\mathbb{N}}
\newcommand{\R}{\mathbb{R}}
\newcommand{\slim}{\sum\limits}
\newcommand{\polar}{\circ}
\newcommand{\relint}{\mathrm{relint}}
\newcommand{\argmin}{\operatorname*{arg\; min}}% Projection operator
\theoremstyle{plain}
\newtheorem{thm}{Theorem}[section]
\newtheorem*{thm*}{Theorem} %[section], чтобы нумеровать сначала в каждом разделе
\newtheorem{lm}{Lemma}[section]
\newtheorem{st}{Statement}
\theoremstyle{definition}
\newtheorem*{defn}{Definition}
\newtheorem{cor}{Corollary}[section]
\newtheorem{lemma}{Lemma}[section]
\theoremstyle{remark}
\newtheorem*{rem}{Remark}
\newcommand{\aff}{\mathop{\mathrm{aff}}\nolimits}
\newcommand{\bu}{\mathbf{u}}
\newcommand{\F}{\mathcal{F}}
\newtheorem{conjecture}{Conjecture}[section]
\newcommand{\lin}{\mathop{\mathrm{lin}}\nolimits}
\newcommand{\conv}{\mathop{\mathrm{conv}}\nolimits}
\newcommand{\pos}{\mathop{\mathrm{pos}}\nolimits}
\newcommand{\bx}{\mathbf{x}}
\title{\textbf{Discrete intrinsic volumes }}%Discrete intrinsic volumes \thanks{The work was supported by the Foundation for the
\author{Mariia Dospolova \thanks{The work was supported by the Foundation for the
Advancement of Theoretical Physics and Mathematics “BASIS”.}
\thanks{The work was supported by Ministry of Science and Higher Education of the Russian Federation, agreement №
075-15-2019-1620.}} %\\ email \href{mailto:me@somewhere.com}{\color{blue}dospolova.maria@yandex.ru} }
\date{} 
\begin{document}

\maketitle
\date{} 

\begin{abstract}
For a convex lattice polytope  $P\subset \mathbb R^d$ of dimension $d$ with vertices in $\mathbb Z^d$, denote by $L(P)$ its discrete volume which is defined as the number of integer points inside $P$. The classical result due to Ehrhart says that for a positive integer $n$, the function $L(nP)$ is a polynomial in $n$ of degree $d$ whose leading coefficient is the  volume of $P$. In particular, $L(nP)$ approximates the volume of $nP$ for large $n$.

In convex geometry, one of the central notion which generalizes the volume is the intrinsic volumes. The main goal of this paper is to introduce their discrete counterparts. In particular, we show that for them the analogue of the Ehrhart result holds, where the volume is replaced by the intrinsic volume.

We also  introduce and study a notion of Grassmann valuation which generalizes both the discrete volume and the solid-angle valuation introduced by Reeve and Macdonald.

\end{abstract}

\keywords{lattice polytope, discrete volume, intrinsic volume, discrete intrinsic volume, conic intrinsic volume, Grassmann angle, Ehrhart polynomial, solid-angle polynomial, Macdonald Polynomial, Reeve's tetrahedron, solid angle, valuation.}
%\maketitle
 \thispagestyle{empty}%Отсутствие нумерации на титульном листе

%\subjclass[2020]{Primary, 52B20, 52A39; secondary, 52B05, 52C07, 60D05}
% \ \vspace{1cm}

\MSC{Primary, 52B20, 52A39; secondary, 52B05, 52C07, 60D05.}
%2010 Mathematics Subject Classification

\newpage
\tableofcontents

%\ \vspace{1.8cm}

\newpage

\section{Introduction and main results}
%We also put $W_{m}^{+} - m $-dimensional random closed half-space in $\mathbb{R}^d$ : $W_{m}^{+} \subset W_{m} $.\\
%We denote by $W_{m}^{-} - m $-dimensional half-space , such that $W_{m}^{-} =(\mathbb{R}^m \setminus W_{m}^{+})\cup W_{m-1}.$
%Let $K\in\R^d$ be a convex body, that is, a convex compact set with non-empty interior. 

Consider the integer lattice $\Z^d$. The main object of our interest is a \emph{convex lattice polytope}  $P\subset \R^d$, that is, a convex hull of a finite number of points from $\Z^d$. Denote by $\mathcal P(\Z^d)$ the set of all (including the empty set) convex lattice polytopes in $\R^d$.

By definition, $\aff P$, the affine hull of $P$, is the least affine subspace of $\R^d$ containing $P$, and $\dim P$, the dimension of $P$, is the dimension of this subspace. We say that $P$ is of full dimension if $\dim P=d$. Equivalently, $P$ has a non-empty interior or the $d$-dimensional Lebesgue measure of $P$ is positive.

Denote by $|P|$ the $k$-dimensional Lebesgue measure of $P$, where $k=\dim P$. Thus for $P\ne\emptyset$ we have $|P|>0$. In particular, if $P$ coincides with a point, then $|P|=1$. If $P=\emptyset$, then, by definition, $\dim P=0$ and $|P|=0$.

\subsection{Discrete volume}
For $n\in\N^1$ define the $n$-dilate of $P$ by
\begin{align*}
    nP:=\{n\bu:\bu\in P\}
\end{align*}
and for $P$ of full dimension define the \emph{discrete volume} of $P$ as the number of points from $\Z^d$ lying in $P$:
\begin{align}\label{1446}
    L(P):=|P\cap\Z^d|.
\end{align}
This term arose from the easy observation (see, e.g.,~\cite{pC13}) that if $\dim P=d$, then
\begin{align}\label{1305}
    \lim_{n\to\infty}\frac{L(nP)}{n^d}=|P|.
\end{align}
If $\dim P=k<d$, then~\eqref{1305} still holds with zero right-hand side. However, if we we replace the normalization term $n^d$ by $n^k$,  we get (see, e.g.,~\cite[Section~5.4]{BR15})
\begin{align}\label{1435}
    \lim_{n\to\infty}\frac{L(nP)}{n^k}=\frac{|P|}{\det(P)} ,
\end{align}
where $\det(P)$ denotes the determinant of the $k$-dimensional lattice $\Z^d\cap \aff P$. %(for definition, see Section~\ref{1452}). 
%Sometimes the right-hand side of~\eqref{1435} is called the \emph{relative volume} of $P$ (see, e.g.,~\cite[Section~5.4]{BR15}). 

Thus the natural generalization of the discrete volume to the lattice polytopes in $\R^d$ of any dimension is
\begin{align*}
    L(P):=\det(P)\cdot|P\cap\Z^d|,
\end{align*}
which coincides with~\eqref{1446} when $\dim P = d$. With this notation,~\eqref{1305} takes the following general form:
\begin{align}\label{2133}
    \lim_{n\to\infty}\frac{L(nP)}{n^{\dim P}}=|P|.
\end{align}

One of our goals is to naturally generalize the definition of the discrete volume of $P$ to the definition of the \emph{$k$-th discrete intrinsic volume} and to obtain the generalization of~\eqref{2133} with the $k$-th intrinsic volume in the right-hand side. 

To this end, given a convex cone $C\subseteq\R^d$, denote by $\alpha (C)$ the solid angle of $C$ measured with respect to the linear hull of $C$ as the ambient space, so we always have $\alpha (C)>0$ (similar to $|\cdot|$, see Subsection~\ref{Convex cones} for details).
The usual (not discrete) $k$-th intrinsic volume of $P$ is defined (see, e.g.,~\cite[Relation~4.23]{rS14}) as
\begin{align}\label{2107}
    V_k(P)=\sum_{F\in\F_k(P)}\alpha(N_F(P))\cdot|F|,
\end{align}
where $N_F(P)$ is the normal cone of $P$ at face $F$ (see Subsection~\ref{polytopes} for the definition) and $\F_k(P)$ denotes the set of all $k$-dimensional faces (for short, $k$-faces) of $P$. Note that if $\dim P = k$, then $V_k(P)=|P|$ and $V_{k+1}(P)=\dots=V_d(P)=0$ (the latter holds if $k<d$).

The first na\"{\i}ve idea for the discrete intrinsic volume is to consider the following quantity:
\begin{align}\label{2314}
    L_k(P)=\sum_{F\in\F_k(P)}\alpha(N_F(P))\cdot \det(F) \cdot L(F).
\end{align}
Indeed, it easily follows from~\eqref{2107} and~\eqref{2133} that
\begin{align}\label{727}
    \lim_{n\to\infty}\frac{L_k(nP)}{n^k}=V_k(P).
\end{align}
However, $L_k(\cdot)$ defined in~\eqref{2314} does not possess one important property: it is not a \emph{valuation}. A real function $\varphi:\mathcal P(\Z^d)\to \R^1$ is called a valuation if $\varphi(\emptyset)=0$ and for all $P, Q\in\mathcal P(\Z^d)$,
\begin{align}\label{728}
    \varphi(P\cup Q)=\varphi(P)+\varphi(Q)-\varphi(P\cap Q)\quad \text{whenever}\quad P\cup Q\in\mathcal P(\Z^d).
\end{align}
We say that $\varphi$ is \textit{translation-invariant} with respect to $\Z^d$ if $\varphi(t + P ) = \varphi(P)$ for all $P \in \mathcal{P}(\mathbb{Z}^d) $ and $t \in \Z^d$.
All valuations that we  consider are translation-invariant.
%A \textit {translation-invariant valuation} is a valuation $\varphi$, such that for all $P \in \mathcal{P}$ and all $t \in \Z^d$

Both $V_k(\cdot)$ and $L(\cdot)$ are valuations on $\mathcal P(\Z^d)$ which implies many of their nice properties, some of them being considered in Subsection~\ref{2329}. Moreover, the intrinsic volumes defined in~\eqref{2107} are valuations on the set of convex compacts in $\R^d$.

Thus it is essential to find a definition for the discrete intrinsic volume which satisfies both~\eqref{727} and~\eqref{728}.

As a possible clue to this problem, let us consider another measure of discrete volume, a so-called \emph{solid-angle valuation} introduced by Reeve~\cite{jR57},~\cite{jR59} and Macdonald~\cite{iM63},~\cite{iM71} (see also~\cite[Section~13.1]{BR15}). To this end, let us represent~\eqref{1446} as 
\begin{align}\label{903}
    L(P)=\sum_{v\in\Z^d} \mathbbm {1}[v\in P].
\end{align}
Now  let us define the solid-angle valuation of $P$ as a slight modification of~\eqref{903}:
\begin{equation*}
  A(P)= \begin{cases}
 \slim_{v\in\Z^d} \mathbbm {1}[v\in P]\cdot\alpha(T_v(P)) &\text{ if $\dim P =d$,} 
   \\
   0 & \text { otherwise},
 \end{cases}
\end{equation*}
% \begin{align*}
%     %A(P)=\sum_{v\in\Z^d} \mathbbm {1}[v\in P]\cdot\alpha(T_v(P)),
% \end{align*}
where $T_v(P)$ is the tangent cone of $P$ at point $v$ (see Subsection~\ref{polytopes} for the definition). In other words, when we count point inside $P$, we leave the interior points untouched and  multiply the boundary points by the corresponding solid angles. Since for large $n$ the proportion of the boundary points of $nP$ is negligible, as in~\eqref{1305} %for $P$ of full dimension 
we have 
\begin{align}\label{623}
     \lim_{n\to\infty}\frac{A(nP)}{n^d}=|P|.
\end{align}
On the other hand,  $A(\cdot)$ differs from $L(\cdot)$ in a substantial way: if $P,Q,P\cup Q\in\mathcal P(\Z^d)$ and we also have $\dim (P\cap Q)<d$, then
\begin{align*}
    A(P\cup Q)=A(P)+A(Q).
\end{align*}
Clearly, $L(\cdot)$ lacks this property. Essentially, this is a reason why defined above $L_k(\cdot)$ failed to be a valuation. On the contrary, the generalization of $A(\cdot)$ which we are going to introduce now will turn out to be a valuation.

For $P\in\mathcal P(\Z^d)$ of any dimension and $k=0,1,\dots,d$, let
\begin{align*}
    A_k(P):=\sum_{v\in P\cap \Z^d}\sum_{F\in\mathcal F_k(P)} \mathbbm {1}[v\in F] \det(F) %\upsilon_k(T_F(P))
    \alpha(N_F(P))\alpha(T_v(F)),
\end{align*}
where $\alpha(T_v(F))$ is the solid angle of the tangent cone of $F$ at point $v$, $\alpha(N_F(P))$ is the solid angle of the normal cone of $P$ at face $F$.   %$\upsilon_k(T_F(P))$ is the $k$-th conic intrinsic volume of $T_F(P)$ (see Subsection~\ref{Convex cones} for the definition). 

In our first theorem we collect some properties of $A_k(\cdot)$ including the connection with $A(\cdot)$. %and $L(\cdot)$.
%\begin{thm}\label{1959}
%For all $P\in\mathcal P(\Z^d)$ we have
%\begin{enumerate}
    %\item $A_0(P)= 1$.
   % \item $A_d(P)=A(P)$.
    %\item If $\dim P< k$, then $A_k(P)=0$.
   % \item $\sum_{k=0}^d A_k(P)=L_k(P)$.
    %\item $\lim_{n\to\infty}\frac{A_k(nP)}{n^k}=V_k(P)$, \quad $k=0,1,\dots,d$.
%\end{enumerate}
%\end{thm}
%A more involved is the following result.
\begin{thm}\label{2000}
For any $k\in\{0,1,\dots,d\}$, the $k$-th discrete intrinsic volume $A_k(\cdot)$ is a translation-invariant valuation on $\mathcal P(\Z^d)$.  Furthermore, 
for all $P\in\mathcal P(\Z^d)$ we have
\begin{enumerate}
    \item $A_0(P)= 1$.
   \item $A_d(P)=A(P)$.
    \item If $\dim P< k$, then $A_k(P)=0$.
   %\item $\sum_{k=0}^d A_k(P)=L_k(P)$.
    \item $\lim_{n\to\infty}\frac{A_k(nP)}{n^k}=V_k(P)$, \quad $k=0,1,\dots,d$.
\end{enumerate}
\end{thm}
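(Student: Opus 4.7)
The plan is to handle the elementary items first and then concentrate on the valuation property, which is the main obstacle.

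Items (1)--(4) follow directly from the definition. For (1), every $0$-face is a vertex $\{w\}$ with $\det(\{w\})=1$ and $\alpha(T_w(\{w\}))=\alpha(\{0\})=1$, so $A_0(P)=\sum_w\alpha(N_w(P))$, and the normal cones at the vertices tile their ambient space, giving the sum $1$. For (2), when $\dim P=d$ the only $d$-face is $P$ itself with $\det(P)=\alpha(N_P(P))=1$, reducing $A_d$ to $A$; when $\dim P<d$ both sides vanish. Item (3) is immediate from $\mathcal F_k(P)=\emptyset$. Translation invariance by $t\in\Z^d$ holds factor by factor. For the asymptotic (4), the scaling identities $N_{nF}(nP)=N_F(P)$ and $\det(nF)=\det(F)$ give
\[
A_k(nP)=\sum_{F\in\mathcal F_k(P)}\det(F)\,\alpha(N_F(P))\,A_{\mathrm{rel}}(nF),
\]
where $A_{\mathrm{rel}}(nF):=\sum_{v\in nF\cap\Z^d}\alpha(T_v(nF))$ is the solid-angle valuation of $nF$ inside its affine hull. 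The $k$-dimensional analogue of~\eqref{623} yields $A_{\mathrm{rel}}(nF)/n^k\to|F|/\det(F)$, and summing recovers~\eqref{2107}.

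The valuation property is the heart of the theorem. The strategy is to localise: write $A_k(P)=\sum_{v\in\Z^d}c_k(v,P)$ with
\[
c_k(v,P)=\sum_{F\in\mathcal F_k(P),\,v\in F}\det(F)\,\alpha(N_F(P))\,\alpha(T_v(F)),
\]
and prove that each $c_k(v,\cdot)$ is itself a valuation; summing over the finitely many $v$ contributing for a fixed $P$ then gives the valuation property of $A_k$. The structural observation is that $c_k(v,P)$ depends only on the tangent cone $T_v(P)$ (with apex translated to $0$): the faces $F\ni v$ of $P$ correspond bijectively to the faces of $T_v(P)$ via $F\mapsto T_v(F)$, the angles $\alpha(N_F(P))$ and $\alpha(T_v(F))$ transport accordingly, and $\det(F)$ depends only on $\lin(F-v)$. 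Thus $c_k(v,P)=\psi_k(T_v(P))$ (with $\psi_k(\emptyset):=0$ for $v\notin P$) for the cone valuation
\[
\psi_k(C)=\sum_{G\in\mathcal F_k(C)}\det(\Z^d\cap\lin G)\,\alpha(G)\,\alpha(N_G(C)).
\]

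The proof then reduces to the cone-level identity $\psi_k(C_1\cup C_2)+\psi_k(C_1\cap C_2)=\psi_k(C_1)+\psi_k(C_2)$ for polytopal cones with convex union, combined with the standard tangent-cone identities $T_v(P\cup Q)=T_v(P)\cup T_v(Q)$ and $T_v(P\cap Q)=T_v(P)\cap T_v(Q)$, valid for $v\in P\cap Q$ when $P\cup Q\in\mathcal P(\Z^d)$ (the case where $v$ lies in only one of $P,Q$ collapses immediately, since the tangent cone of $P\cup Q$ at $v$ agrees locally with that of the polytope actually containing $v$). The cone identity itself is a lattice-weighted analogue of the classical valuation property of the conic intrinsic volume $\gamma_k(C)=\sum_{G\in\mathcal F_k(C)}\alpha(G)\,\alpha(N_G(C))$; since the lattice factor $\det(\Z^d\cap\lin G)$ is constant on each rational $k$-dimensional subspace, one can group faces by their linear hulls and apply the classical conic valuation argument subspace by subspace. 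This final step is where I expect the main technical difficulty to lie.
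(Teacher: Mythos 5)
Your treatment of items (1)--(4) is correct, and your localization of the valuation property at individual lattice points, writing $A_k(P)=\sum_{v}c_k(v,P)$ with $c_k(v,P)$ depending only on the tangent cone $T_v(P)$, is essentially the same reduction the paper makes: it fixes $v\in(P\cup Q)\cap\Z^d$ and proves the identity \eqref{for a fixed v} pointwise. Your direct asymptotic argument for item (4) via $A_{\mathrm{rel}}(nF)/n^k\to|F|/\det(F)$ is in fact more self-contained than the paper's, which deduces item (4) from the leading coefficient of the intrinsic Ehrhart polynomial in Theorem~\ref{1021}.

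However, there is a genuine gap at exactly the step you flag: the cone-level identity $\psi_k(C_1\cup C_2)+\psi_k(C_1\cap C_2)=\psi_k(C_1)+\psi_k(C_2)$ is not proved, and it cannot be obtained by citing ``the classical conic valuation argument subspace by subspace.'' The classical fact is additivity of the \emph{total} conic intrinsic volume $\upsilon_k(C)=\sum_{G\in\mathcal F_k(C)}\alpha(G)\alpha(N_G(C))$; since your $\psi_k$ weights each $k$-face by $\det(\Z^d\cap\lin G)$, you need additivity of the partial sums $\sum_{G:\,\lin G=L}\alpha(G)\alpha(N_G(C))$ for each fixed rational subspace $L$ separately, and this localized statement does not follow formally from the global one. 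Establishing it requires showing precisely how the $k$-faces of $C_1\cup C_2$ and of $C_1\cap C_2$ with a given linear span correspond to those of $C_1$ and $C_2$, including the degenerate configurations: e.g., $C_1$ has a $k$-face in direction $L$ while $C_2$ has none, and one must decide whether that face reappears in the union or in the intersection and verify that the tangent and normal angles transfer (this is where an auxiliary fact such as Lemma~\ref{convexity of union cones} is needed); or the faces of $C_1$ and $C_2$ in direction $L$ overlap partially, nest, or coincide, each case invoking additivity of the solid angle or of $\upsilon_k$ differently. This face-matching is the actual content of the paper's proof --- Lemmas~\ref{unionf} and~\ref{unionff} plus the case analysis 2.1--2.2.2.3 --- and your proposal reduces the theorem to it without supplying it.
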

The proof of Theorem %~\ref{1959}and
\ref{2000} is given in Section~\ref{809}.
The fourth part of Theorem \ref{2000} immediately follows from the first part of Theorem \ref{1021} below.

In the next subsection, we will take a closer look at $L(nP), A(nP)$, and $A_k(nP)$ as functions of $n$.

\subsection{Ehrhart's and Macdonald's polynomials}\label{2329}
The fundamental results due to Ehrhart~\cite{eE67},~\cite{eE67b} and Macdonald~\cite{iM63},~\cite{iM71} state that if $P$ is a convex lattice polytope in $\R^d$ of full dimension, then for $n\in\Z_+$ the functions $L(nP)$ and $A(nP)$ are polynomials in $n$ of degree $d$:
\begin{align}\label{924}
    L(nP)=\sum_{i=0}^d L^{(i)}(P)n^i, \quad A(nP)=\sum_{i=0}^d A^{(i)}(P)n^i.
\end{align}
These polynomials, which we denote by 
\begin{align*}
    L_P(t), A_P(t), t\in\R^1,
\end{align*}
are called\emph{ Ehrhart's and Macdonald's polynomials}.

It readily follows from~\eqref{1305} and~\eqref{623} that
\begin{align}\label{1009}
    L^{(d)}(P) = A^{(d)}(P) = |P|,
\end{align}
%and it follows from the Gram--Euler and Brianchon--Gram relations (see ???)  
and it is known that (see \cite[Corollary 3.15., Theorem 13.8.]{BR15}) %see Section~\ref{1452}) 
\begin{align}\label{1010}
    L^{(0)}(P) = 1 \quad\text{and} \quad A^{(0)}(P) = 0.
\end{align}
Moreover, it is not hard to show that
\begin{align}\label{1011}
    L^{(d-1)}(P) = \frac{1}{2} \sum_{F\in\F_{d-1}(P)}\frac{|F|}{\det(F)}\quad\text{and} \quad A^{(d-1)}(P) =A^{(d-3)}(P) = \dots = 0,
\end{align}
see~\cite[Theorem~5.6]{BR15} and second part of~\eqref{651} below.

The important properties of Ehrhart's and Macdonald's polynomials are the \emph{reciprocity relations}:
\begin{align}\label{651}
    L_P(-t)=(-1)^{\dim P}L(t\cdot\relint (P)) \quad\text{and} \quad A_P(-t)=(-1)^{\dim P} A_P(t),
\end{align}
where $\relint (P)$ denotes the relative interior of $P$, the interior with respect to the affine hull of $P$.

Later McMullen~\cite{pM77} generalized~\eqref{924} to all translation-invariant valuations on $\mathcal P(\Z^d)$. Specifically, he showed that if $P\in\mathcal P(\Z^d)$ and $\varphi$ is a valuation on $\mathcal P(\Z^d)$, then for $n\in\Z_+$, the function $\varphi(nP)$ is a polynomial in $n$ of degree $r\leq d$:
\begin{align}\label{1621}
    \varphi(nP)=\sum_{k=0}^r \varphi^{(k)}(P)n^k,
\end{align}
where $\varphi^{(k)}(\cdot)$ is a homogeneous valuation on $\mathcal P(\Z^d)$ of degree $k$.

Therefore, since the $k$-th discrete intrinsic volume $A_k(\cdot)$ is a valuation on $\mathcal P(\Z^d)$ (see Theorem~\ref{2000}), $A_k(nP)$ is also a polynomial in $n\in\Z_+$ (of degree $k$, which will be seen later):
\begin{align*}
    A_k(nP)=\sum_{i=0}^k A_{k}^{(i)}(P)n^i,
\end{align*}
where we tacitly assumed that $\dim P\geq k$, otherwise %due to the second part of Theorem~\ref{1959} 
we would have $ A_k(nP)\equiv 0$.
We call this polynomial the \emph{$k$-th intrinsic Ehrhart polynomial} and denote it by $A_{k,P}(t), t\in\R^1$.

In our next theorem, we derive different properties of the intrinsic Ehrhart  polynomials  and its coefficients similar to~\eqref{1009},~\eqref{1010},~\eqref{1011}, and~\eqref{651}.
\begin{thm}\label{1021}
Let $P\in\mathcal P(\Z^d)$ and $k\in\{0,1,\dots,d\}$. If $\dim P<k$, then $A_{k,P}(t)\equiv0$. Otherwise, $A_{k,P}(t)$ is always even or odd %the 
polynomial of degree $k$,
\begin{align*}
    A_{k,P}(t) = \sum_{i=0}^k A_{k}^{(i)}(P)t^i
\end{align*}
with the following properties of the coefficients:
\begin{enumerate}
    \item $A_{k}^{(k)}(P)=V_k(P)$,
    \item $A_{k}^{(0)}(P)=0$.
    %\item $A_{k,P}^{(k-1)}=???$,
\end{enumerate}
Moreover, $A_{k,P}(t)$ satisfies the following reciprocity law:
\begin{align*}
    A_{k, P}(-t) =  (-1)^{k}  A_{k, P}(t).
\end{align*}
\end{thm}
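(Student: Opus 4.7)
The strategy is to reduce Theorem~\ref{1021} to Macdonald's classical theorem applied separately on each $k$-face of $P$. Swapping the order of summation in the definition of $A_k(P)$ gives
\[
A_k(P) \;=\; \sum_{F\in\F_k(P)} \det(F)\,\alpha(N_F(P))\,\tilde A(F),
\]
where $\tilde A(F):=\sum_{v\in F\cap\Z^d}\alpha(T_v(F))$ is the solid-angle valuation of $F$ regarded as a full-dimensional lattice polytope inside its affine hull $\aff F$ (with the induced lattice $\Z^d\cap\aff F$); the key point is that $\alpha(T_v(F))$ is measured intrinsically in $\lin T_v(F)$, so $\tilde A$ really is a Macdonald-type valuation on $\aff F$. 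Under the dilation $P\mapsto nP$, the $k$-faces of $nP$ are exactly $nF$ for $F\in\F_k(P)$, their normal cones equal $N_F(P)$, and $\det(nF)=\det(F)$ because $\aff(nF)=\aff F$; hence
\[
A_k(nP) \;=\; \sum_{F\in\F_k(P)} \det(F)\,\alpha(N_F(P))\,\tilde A(nF).
\]

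Applying Macdonald's theorem (i.e.~\eqref{924},~\eqref{1010},~\eqref{651}) to each $\tilde A(nF)$ inside $\aff F$, the function $\tilde A(nF)$ is a polynomial in $n$ of degree $k$, whose leading coefficient equals $|F|/\det(F)$ (the analogue of~\eqref{1435} for the solid-angle valuation), whose constant term vanishes, and which satisfies $\tilde A_F(-t)=(-1)^k\tilde A_F(t)$. Collecting powers of $n$ in the identity above yields
\[
A_k^{(i)}(P) \;=\; \sum_{F\in\F_k(P)} \det(F)\,\alpha(N_F(P))\,\tilde A^{(i)}(F) \qquad (i=0,1,\dots,k).
\]

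Each conclusion of Theorem~\ref{1021} now drops out. Property~1 is the identity
\[
A_k^{(k)}(P) \;=\; \sum_{F\in\F_k(P)}\alpha(N_F(P))\,|F| \;=\; V_k(P)
\]
by~\eqref{2107}; since every summand is positive when $\dim P\geq k$, this also shows that $A_{k,P}(t)$ has degree exactly $k$. Property~2 is immediate because $\tilde A^{(0)}(F)=0$ for each face. The reciprocity $A_{k,P}(-t)=(-1)^k A_{k,P}(t)$ is obtained termwise from Macdonald's reciprocity, and it forces $A_{k,P}$ to be even when $k$ is even and odd when $k$ is odd. Finally, if $\dim P<k$ then $\F_k(P)$ is empty, so $A_k(nP)\equiv 0$. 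The only mild subtlety is the transfer of Macdonald's results from $\R^d$ to $\aff F$; since $F$ is a full-dimensional lattice polytope in $\aff F$ with respect to the induced lattice, this transfer is entirely routine and presents no real obstacle.
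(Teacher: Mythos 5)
Your proposal is correct and takes essentially the same route as the paper: the paper likewise regroups $A_k(nP)$ as $\sum_{F\in\F_k(P)}\det(F)\,\upsilon_k(T_F(P))\,A_F(n)$ with $A_F$ the Macdonald polynomial of $F$ inside $\aff F$ (using $\alpha(N_F(P))=\upsilon_k(T_F(P))$), and then reads off the degree, leading coefficient via~\eqref{2107a}, vanishing constant term, and reciprocity from the known properties~\eqref{1010},~\eqref{1011},~\eqref{651} of Macdonald's polynomial. The only cosmetic difference is that the paper expands $A_F(t)$ further as $\sum_{E\in\mathcal F(F)}L_{\relint(E)}(t)\,\alpha(T_E(F))$ before invoking these facts, whereas you cite Macdonald's theorem directly.
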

The proof of Theorem \ref{1021}
 will be given in Subsection~\ref{816}. Now we would like to introduce one more family of discrete measures which simultaneously generalizes \emph{both} the discrete volume $L(\cdot)$ and the solid-angle valuation $A(\cdot)$.

\subsection{Grassmann angle valuations}\label{1224}
If $C\subset\R^d$ is a convex cone with non-empty interior (i.e., $\dim C = d$) such that
\begin{align}\label{1145}
    C\ne\R^d,
\end{align}
then its solid angle $\alpha (C)$ can be calculated as the one-halfed probability to be non-trivially  intersected with the random line through the origin randomly chosen with respect to the Haar measure. This observation encouraged Gr\"unbaum~\cite{bG68} to introduce the following generalization of the solid angle: 
\begin{align*}
    \gamma_k(C):=\mathbb P[C\cap W_{d-k}\ne \{0\}],\quad k=0,1,\dots, d,
\end{align*}
where $W_{d-k}$ is a random $(d-k)$-dimensional linear subspace randomly chosen with respect to the Haar measure on the Grassmannian of all linear $(d-k)$-dimensional subspaces. Since by definition $\gamma_{d-1}(C)=2\alpha(C)$ (when $C\ne\R^d$) and $\gamma_0(C)\equiv 1$, the natural generalization of both $L(\cdot)$ and  $A(\cdot)$ is
\begin{align*}
      \tilde G_{k}(P) := \sum_{v \in P\cap\mathbb{Z}^d} \gamma_{k}(T_{v}(P)).
  \end{align*} 
However, there are 2 problems with this definition. The first one, for $k=d-1$ it is not with full accordance with $A(\cdot)$: if $C=\R^d$, then $\frac12\gamma_{d-1}(C)\ne\alpha(C)$. The second problem is more crucial: $G_{k}(P)$ is not a valuation for all $k\ne0,d$. It easily follows from the fact that $\gamma_k$ coincides for $(k+1)$-dimensional linear subspaces and half-subspaces. 

Let us suggest a possible solution. In the above definition of the solid angle we can omit Assumption~\eqref{1145} if instead of  a random line we consider the probability of the intersection with a random \emph{ray}, i.e. half-line. Then, a modification of Grassmann angles fully agreed with the solid angle is
\begin{align*}
    \alpha_k(C):=\mathbb P[C\cap W_{d-k}^+\ne \{0\}],\quad k=0,1,\dots, d,
\end{align*}
where $W_{d-k}^+$ is a random $(d-k)$-dimensional linear half-subspace randomly chosen with respect to the Haar measure on the Grassmannian of all linear $(d-k)$-dimensional subspaces (for details, see Subsection \ref{1716}). Let
\begin{align*}
      G_{k}(P) := \sum_{v \in P\cap\mathbb{Z}^d} \alpha_{k}(T_{v}(P)).
\end{align*} 
Now, with this modification of the Grassmann angles, $G_{k}(\cdot)$ becomes a valuation (see Theorem~\ref{1548}). We call it the \emph{$k$-th Grassmann angle valuation}. The basic properties of the Grassmann angle valuations are collected in the next theorem.
\begin{thm}\label{1548}
$G_{k}(\cdot)$ is a translation-invariant valuation on $\mathcal P(\Z^d)$. Moreover, for all $P\in\mathcal P(\Z^d)$  we have
\begin{enumerate}
    \item $G_{0}(P)=L(P)-1$.
    \item $G_{d-1}(P)=A(P)$. %if $\dim P = d$.
    \item $G_{d}(P)=0$.
    \item If $\dim P= k$, then 
    \begin{align*}
        0=G_{d}(P)=\dots=G_{k}(P)\leq G_{k-1}(P)\leq G_{k-2}(P) \leq \dots \leq G_{0}(P)=L(P)-1.
    \end{align*}
\end{enumerate}
\end{thm}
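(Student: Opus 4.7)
I will first prove translation invariance and the valuation property, then verify the four listed identities. Translation invariance is immediate: $T_{v+t}(P+t)=T_v(P)$ and $v\mapsto v+t$ bijects $P\cap\Z^d$ with $(P+t)\cap\Z^d$ for $t\in\Z^d$. For the valuation property, fix $P, Q$ with $P\cup Q\in\mathcal P(\Z^d)$ and inspect the contribution of each $v\in\Z^d$ to the identity $G_k(P\cup Q)+G_k(P\cap Q)=G_k(P)+G_k(Q)$. For $v$ outside $P\cup Q$ nothing happens, and for $v$ in exactly one of $P, Q$ the contributions cancel trivially because a neighborhood of $v$ in $P\cup Q$ coincides with the single set containing $v$. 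For $v\in P\cap Q$ the standard identities $T_v(P\cup Q)=T_v(P)\cup T_v(Q)$ and $T_v(P\cap Q)=T_v(P)\cap T_v(Q)$ reduce everything to the cone-level statement
\begin{align*}
\alpha_k(C_1\cup C_2)+\alpha_k(C_1\cap C_2)=\alpha_k(C_1)+\alpha_k(C_2),
\end{align*}
valid whenever $C_1, C_2, C_1\cup C_2$ are convex cones. Writing $\alpha_k(C)=\mathbb{E}\,\mathbbm{1}[C\cap W_{d-k}^+\ne\{0\}]$ and using linearity, I reduce further to verifying, for each fixed half-subspace $W^+$, that $C_1\cap C_2\cap W^+\ne\{0\}$ whenever both $C_i\cap W^+\ne\{0\}$; the union side is automatic since $(C_1\cup C_2)\cap W^+=(C_1\cap W^+)\cup(C_2\cap W^+)$. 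For the nontrivial implication I would invoke the separating hyperplane $H$ through the origin with $C_1\subseteq H^-$, $C_2\subseteq H^+$, $C_1\cap C_2\subseteq H$ (which exists since $C_1\cup C_2$ is convex): picking $x_i\in C_i\cap W^+$ non-zero, the segment $[x_1,x_2]$ lies in the convex set $(C_1\cup C_2)\cap W^+$ and its crossing with $H$ yields a (generically non-zero) element of $C_1\cap C_2\cap W^+$.

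Items 2 and 3 are direct. When $k=d-1$, $W_1^+$ is a uniform random ray, so $\alpha_{d-1}(C)$ equals the solid angle $\alpha(C)$ when $\dim C=d$ and vanishes otherwise; this gives $G_{d-1}(P)=A(P)$ in both cases of $\dim P$. When $k=d$, $W_0^+=\{0\}$ and $\alpha_d\equiv 0$, so $G_d(P)=0$. For the vanishing part of item 4, when $\dim P=k$ and $i\ge k$ one has $\dim T_v(P)+\dim W_{d-i}^+\le k+(d-i)\le d$, so a uniformly random $(d-i)$-dimensional half-subspace almost surely meets $T_v(P)$ only at the origin, hence $\alpha_i(T_v(P))=0$. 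For the monotonicity $G_{i+1}(P)\le G_i(P)$ I couple the random subspaces by choosing $W_{d-i}^+$ uniformly and then $W_{d-i-1}^+$ as a uniformly random half-subspace inside $W_{d-i}^+$; by rotational invariance the marginal distribution of $W_{d-i-1}^+$ is Haar-uniform, and the inclusion $W_{d-i-1}^+\subseteq W_{d-i}^+$ immediately yields $\alpha_{i+1}(C)\le\alpha_i(C)$ for every cone $C$.

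Finally, for item 1, parametrise $W_d^+$ by its outer unit normal $u$, uniform on $S^{d-1}$; then $T_v(P)\cap W_d^+=\{0\}$ precisely when $T_v(P)\subseteq\{x:\langle x,u\rangle\le 0\}$, i.e., when $u\in T_v(P)^\circ=N_v(P)$. Hence $\alpha_0(T_v(P))=1-\alpha(N_v(P))$ with $\alpha$ measured as the $S^{d-1}$-fraction. Because $\alpha(N_v(P))>0$ only when $v$ is a vertex of $P$, summing over lattice points gives
\begin{align*}
G_0(P)=L(P)-\sum_{v\in\F_0(P)}\alpha(N_v(P))=L(P)-1,
\end{align*}
the last equality using that the normal cones at the vertices of $P$ are the top-dimensional cells of the normal fan of $P$ and therefore cover $\R^d$ up to a Lebesgue-null set. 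The principal obstacle is the cone-level valuation identity used in the first paragraph: the delicate point is ensuring that the crossing of $[x_1,x_2]$ with $H$ is non-zero. The degenerate case $x_2=-\lambda x_1$ (forcing the segment to pass through the origin) requires either a small rotation of $W^+$ to avoid it or a reduction to a lower-dimensional ambient space.
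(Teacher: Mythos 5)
Your treatment of translation invariance, of items 2--4, and of item 1 is correct, and in places cleaner than the paper's: the coupling $W_{d-i-1}^+\subseteq W_{d-i}^+$ (same $U$, nested Haar subspaces) gives the monotonicity in item 4 directly for \emph{all} cones, whereas the paper derives it from the monotonicity of the $\gamma_k$ together with Theorem~\ref{connection}. The real divergence from the paper is in the valuation property. The paper does not argue pathwise at all: it first converts $\alpha_k(C)$ into $\sum_{i\geq 1}\upsilon_{k+i}(C)$ via the modified Crofton formula (Theorem~\ref{Crofton}) and then invokes the known additivity of the conic intrinsic volumes $\upsilon_j$ on the pair $T_v(P),T_v(Q)$. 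All of the delicate measure-theoretic work is thereby outsourced to Lemma~\ref{633}/Theorem~\ref{connection}, which are proved separately and with care about exceptional configurations. Your route --- proving the cone-level identity $\alpha_k(C_1\cup C_2)+\alpha_k(C_1\cap C_2)=\alpha_k(C_1)+\alpha_k(C_2)$ by a direct geometric argument on indicators --- is genuinely different and, if completed, more self-contained.

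However, as written there is a genuine gap exactly at the point you flag. The reduction ``for each fixed half-subspace $W^+$'' asks for the implication: if $C_1\cap W^+\neq\{0\}$ and $C_2\cap W^+\neq\{0\}$ then $C_1\cap C_2\cap W^+\neq\{0\}$. This is \emph{false} as a deterministic statement. Take $d=2$, $C_1=\{(x,0):x\leq 0\}$, $C_2=\{(x,0):x\geq 0\}$ (tangent cones at the common endpoint of two abutting lattice segments), $k=0$ and $W_2^+=\{(x,y):y\geq 0\}$: both $C_i$ meet $W_2^+$ nontrivially, but $C_1\cap C_2=\{0\}$. So the indicator identity can only hold almost surely, and the burden of the proof is precisely to show that the exceptional event --- in which $W^+\cap(C_1\cup C_2)$ degenerates to a full line $\ell$ whose two rays lie in $C_1\setminus C_2$ and $C_2\setminus C_1$ respectively --- has probability zero. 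This forces $\ell\subseteq W\cap U^{\perp}$, and ruling it out requires a case analysis on the largest linear subspace contained in $C_1\cup C_2$ of the same kind the paper carries out in the proof of Lemma~\ref{633} (note that the analogous statement there genuinely \emph{fails} for $C=\R^{d-k+1}$, which is why that hypothesis appears). Your proposed remedies --- ``a small rotation of $W^+$'' or ``a reduction to a lower-dimensional ambient space'' --- are not arguments: one cannot rotate a fixed $W^+$ out of the bad set without changing the event being measured, and the dimension reduction is not specified. To close the gap you must either (i) prove the null-set claim for the degenerate configuration directly, or (ii) do what the paper does and pass through $\alpha_k(C)=\sum_{i\geq1}\upsilon_{k+i}(C)$, after which additivity is inherited from the $\upsilon_j$.
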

The proof is postponed to Subsection~\ref{821}.

Again, since the $k$-th Grassmann angle valuation $G_k(\cdot)$ is a valuation on $\mathcal P(\Z^d)$ it follows from the result of McMullen (see~\eqref{1621}) that $G_k(nP)$ is also a polynomial in $n\in\Z_+$:
\begin{align*}
    G_k(nP)=\sum_{i=0}^{d} G_{k}^{(i)}(P)n^i,
\end{align*}
where for simplicity we assume that $P$ is of full dimension.
We call this polynomial the \emph{$k$-th Grassmann polynomial} and denote it by $G_{k,P}(t), t\in\R^1$.

In the next theorem, we derive different properties of the Grassmann  polynomials  and its coefficients similar to~\eqref{1009} and ~\eqref{1010}.%~\eqref{1011}, and~\eqref{651}.
\begin{thm}\label{824}
Let $P\in\mathcal P(\Z^d)$ be of full dimension and $k\in\{0,1,\dots,d - 1\}$. Then  $G_{k,P}(t)$ is a polynomial of degree $d$,
\begin{align*}
    G_{k,P}(t) = \sum_{i=0}^d G_{k}^{(i)}(P)t^i,
\end{align*}
with the following properties of the coefficients:
\begin{enumerate}
    \item $G_{k}^{(d)}(P)=|P|$,
    \item $G_{k}^{(0)}(P)=0$.
   % \item $A_{k,P}^{(k-1)}=???$,
\end{enumerate}
%Moreover, $A_{k,P}(t)$ satisfies the following reciprocity law:
%\begin{align*}
   % ???
%\end{align*}
\end{thm}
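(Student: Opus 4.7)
The plan is to establish both claims from a face decomposition of $G_k(nP)$ combined with the generalized Ehrhart theorem, the Ehrhart--Macdonald reciprocity, and an Euler-type identity for Grassmann angles.

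First I would partition the lattice points of $nP$ by the face whose relative interior contains them. Every $v\in nP\cap\Z^d$ lies in $\relint(nF)$ for a unique $F\in\mathcal F(P)$, and since $\alpha_k$ is invariant under translation of its cone argument, $\alpha_k(T_v(nP))=\alpha_k(T_F^0(P))$, where $T_F^0(P)$ denotes the tangent cone of $P$ at $F$ translated to the origin. Grouping terms yields
\[
G_k(nP)=\sum_{F\in\mathcal F(P)}\alpha_k(T_F^0(P))\,L_F^*(n),\qquad L_F^*(n):=|\relint(nF)\cap\Z^d|,
\]
and $L_F^*(n)$ is a polynomial in $n$ of degree $\dim F$ by the generalized Ehrhart theorem. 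In particular $\deg G_{k,P}\le d$. The unique face of dimension $d$ is $P$ itself: $T_P^0(P)=\R^d$, $\alpha_k(\R^d)=1$ whenever $k\le d-1$ (since $\dim W_{d-k}^+\ge 1$), and the leading coefficient of $L_P^*(n)$ equals $|P|$ by \eqref{2133}. Hence $\deg G_{k,P}=d$ and $G_k^{(d)}(P)=|P|$, which is part~1.

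For the constant term, I would substitute $n=0$ into the decomposition and invoke the Ehrhart--Macdonald reciprocity $L_F^*(0)=(-1)^{\dim F}$, yielding
\[
G_k^{(0)}(P)=\sum_{F\in\mathcal F(P)}(-1)^{\dim F}\,\alpha_k(T_F^0(P)).
\]
Proving this alternating sum vanishes is the step I expect to be the principal obstacle. The plan is to insert the probabilistic definition $\alpha_k(C)=\mathbb E_W[\mathbbm 1[C\cap W\ne\{0\}]]$ with $W=W_{d-k}^+$, swap the finite sum with the expectation, and establish the pointwise identity
\[
\sum_{F\in\mathcal F(P)}(-1)^{\dim F}\,\mathbbm 1[T_F^0(P)\cap W\ne\{0\}]=0
\]
for almost every half-subspace $W$. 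The set $S_W:=\{F:T_F^0(P)\cap W\ne\{0\}\}$ is upward-closed in the face lattice of $P$, hence equals the union of the stars of its minimal elements. An inclusion--exclusion over these minimal elements, combined with the Euler--Poincar\'e identity $\sum_{F\in\mathrm{star}(G)}(-1)^{\dim F}=0$ for every proper face $G\subsetneq P$ (a consequence of the contractibility of the vertex figure of $G$), reduces the total to contributions indexed by sub-collections of minimal faces whose join in the face lattice equals $P$. Verifying that for almost every $W$ no such sub-collection exists---equivalently, that the degenerate half-subspaces form a measure-zero set---then completes the argument for part~2.
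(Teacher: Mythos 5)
Your face decomposition, the identification of the leading coefficient via the unique $d$-dimensional face $F=P$ with $\alpha_k(\R^d)=1$, and the reduction of the constant term to the alternating sum $\sum_{F\in\mathcal F(P)}(-1)^{\dim F}\alpha_k(T_F(P))$ via Ehrhart--Macdonald reciprocity all coincide with the paper's argument. The divergence is in how that alternating sum is shown to vanish: the paper works in expectation, expressing $\alpha_k$ through the classical Grassmann angles (Theorem~\ref{connection}) and invoking Gr\"unbaum's identity \eqref{Gr}, whereas you propose a pointwise combinatorial identity for almost every half-subspace $W$.

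The gap is in the last step of that combinatorial route. After inclusion--exclusion over the minimal elements $G_1,\dots,G_m$ of the filter $S_W$ and the vanishing of $\sum_{F\supseteq G}(-1)^{\dim F}$ for proper faces $G$, what remains is $(-1)^d\sum_{I}(-1)^{|I|+1}$ summed over the nonempty $I\subseteq\{1,\dots,m\}$ with $\bigvee_{i\in I}G_i=P$. You claim that for almost every $W$ no such $I$ exists; this is false. Take $P=[0,1]^2$ and $k=0$, so $W=W_2^+$ is a generic half-plane: its $180^\circ$ arc of directions meets the tangent cones of exactly three consecutive vertices, and the two extreme ones among these are opposite vertices of the square, whose join is $P$ itself. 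The identity survives there only because the \emph{signed} count of such sub-collections vanishes ($\{v_1,v_3\}$ and $\{v_1,v_2,v_3\}$ contribute $-1$ and $+1$), and this cancellation is precisely the content of the claim --- it is not addressed by your sketch, and it is not a measure-zero degeneracy of $W$ as the parenthetical suggests. So either this signed count must be controlled directly (nontrivial), or one should fall back on the expectation-level argument: the paper gets the vanishing from $\alpha_{k,F,P}=\tfrac12\left(\gamma^{d-k,d}(P,F)+\gamma^{d-k-1,d}(P,F)\right)$ together with $\sum_{j}(-1)^j\sum_{F^j}\gamma^{k,d}(P,F^j)=(-1)^{d-k}$, the two shifted Gr\"unbaum sums cancelling each other (with $k=0$ treated separately via $G_{0,P}=L_P-1$).
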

The proof of Theorem~\ref{824} will be given in Subsection~\ref{823}. In the next section we discuss the question of the positivity of the Grassmann polynomials coefficients.

\subsection{Reeve tetrahedron}\label{1013}
Although it follows from~\eqref{1009},~\eqref{1010},~\eqref{1011} that the $d$-th, $(d-1)$-th and the constant coefficients of Ehrhart's and Macdonald's polynomials are non-negative, in general it does not hold for all coefficients. It could be easily seen from the well-known example of Reeve~\cite{jR57} which was used by him  to show that higher-dimensional generalizations of Pick's theorem do not exist.

Specifically, for $h\in\N$ consider a tetrahedron  in $\mathbb{R}^3 $ defined as
\begin{align*}
    \Delta_{h} := \conv \{ (0,0,0) , (0,1,0) , (1,0,0) ,  (1,1,h) \},
\end{align*}
which is called the \emph{Reeve tetrahedron}. It was shown in~\cite[Section 3.7]{BR15} and~\cite{b09positivity} that
\begin{align*}%\label{934}
&L_{\Delta_{h}}(t) =\frac{h}{6}t^3 + t^{2} +\left(2 - \frac{h}{6} \right)t +1 \quad\text{and}\quad A_{\Delta_{h}}(t) =\frac{h}{6}t^3 +  \left ( S - \frac{h}{6} \right) t,
\end{align*}
where $ S = S (\Delta_{h})  $ is the sum of the solid angles at the
vertices of $\Delta_{h}$. Thus, for $h$ large enough, the linear coefficients in the Ehrhart and in the Macdonald polynomials are negative. Our next result shows that the same holds for all Grassmann polynomials of the Reeve tetrahedron.

In terms of Grassmann polynomials for $\Delta_{h}$ we can rewrite as
\begin{align*}
G_{0, \Delta_{h}}(t) &= L_{\Delta_{h}}(t) - 1 = \frac{h}{6}t^3 + t^{2} + \left( 2 - \frac{h}{6} \right) t, 
\\
G_{2, \Delta_{h}}(t) &= A_{\Delta_{h}}(t) = \frac{h}{6}t^3 + \left ( S - \frac{h}{6} \right) t.
\end{align*}
So it is enough to consider only $G_{1, \Delta_{h}}(t)$.
\begin{thm}\label{938}
The first Grassmann polynomial of the Reeve tetrahedron has the following form:
\begin{align*}
G_{1, \Delta_{h}}(t) = \frac{h}{6}t^3 + t^2 + \left( S - \frac{h}{6} \right) t. 
\end{align*}
\end{thm}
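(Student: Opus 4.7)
My plan is to exploit the known form of $G_2(n\Delta_h) = A_{\Delta_h}(n)$ and reduce the problem to the clean difference $G_1(n\Delta_h) - G_2(n\Delta_h)$, computed by a face decomposition of the sum
\[
G_1(n\Delta_h)-G_2(n\Delta_h) \;=\; \sum_{v\in n\Delta_h\cap\Z^3} \bigl(\alpha_1(T_v(n\Delta_h))-\alpha_2(T_v(n\Delta_h))\bigr).
\]
Since the tangent cone is constant on the relative interior of each face of $n\Delta_h$, the sum collapses to four contributions weighted by the lattice point counts in the open interior, the open facets, the open edges, and the vertices.

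The key geometric input is the identity $\alpha_1(C)-\alpha_2(C)=\upsilon_2(C)$ for any closed convex cone $C\subset\R^3$, which follows from the representation $\alpha_k(C)=\sum_{j>k}\upsilon_j(C)$ of the half-Grassmann angle in terms of conic intrinsic volumes. I would then compute $\upsilon_2$ on each face type: $\upsilon_2(\R^3)=0$, so interior lattice points contribute nothing; $\upsilon_2(H)=\tfrac{1}{2}$ for the half-space $H$ at a facet-interior point, via the product rule applied to $H=\R^2\oplus\R_{\geq 0}$; $\upsilon_2(T_e)=\tfrac{1}{2}$ for the dihedral wedge $T_e$ at an edge-interior point, via $T_e=\R\oplus W'$ with $W'$ a 2D wedge; and $\upsilon_2(T_v)=\tfrac{1}{2}-\alpha(N_v(\Delta_h))$ for the $3$-dimensional simplicial vertex cone, obtained from the Sommerville identity $\upsilon_0(T_v)+\upsilon_2(T_v)=\tfrac{1}{2}$ together with $\upsilon_0(T_v)=\alpha(T_v^{\circ})=\alpha(N_v(\Delta_h))$.

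For the Reeve tetrahedron the lattice counts are routine: each of the six edges is a primitive lattice segment, contributing $n-1$ interior lattice points, and each of the four facets is a unimodular lattice triangle, contributing $(n-1)(n-2)/2$ interior lattice points. Inserting these values and using $\sum_v\alpha(N_v(\Delta_h))=1$ (the normal cones at the vertices partition $\R^3$), the difference telescopes to
\[
G_1(n\Delta_h)-G_2(n\Delta_h) \;=\; \tfrac{1}{2}\cdot 6(n-1) + \tfrac{1}{2}\cdot 2(n-1)(n-2) + \bigl(4\cdot\tfrac{1}{2} - 1\bigr) \;=\; (n-1)(n+1)+1 \;=\; n^2.
\]
Adding the Macdonald polynomial $A_{\Delta_h}(n) = \tfrac{h}{6}n^3 + \bigl(S-\tfrac{h}{6}\bigr)n$ then yields exactly the claimed expression $\tfrac{h}{6}t^3 + t^2 + \bigl(S - \tfrac{h}{6}\bigr)t$.

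The principal obstacle is the formula $\alpha_1(C)-\alpha_2(C)=\upsilon_2(C)$ and the identification $\upsilon_2(T_v)=\tfrac{1}{2}-\alpha(N_v)$ at the vertices; both are manifestations of the bridge between the half-Grassmann angle and the cone's conic intrinsic volumes. Granted that $\alpha_k(C)=\sum_{j>k}\upsilon_j(C)$ has been established in the paper's earlier development of the $\alpha_k$ theory, the remainder of the proof combines only standard product rules for conic intrinsic volumes, Sommerville's relation, straightforward Ehrhart counts for the Reeve tetrahedron, and the fact that vertex normal cones tile $\R^3$.
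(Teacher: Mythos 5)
Your proof is correct, and it takes a genuinely different route from the paper's. The paper works with the full sum $G_1(t\Delta_h)=\sum_v\tfrac12\big(\gamma_1(T_v)+\gamma_2(T_v)\big)$, converts the $\gamma_2$-part into a statement about the ray $l$ meeting $\relint(T_v^\polar)$ (so that it contributes $\tfrac12 L_{\Delta_h}(t)-1$) and the $\gamma_1$-part into $A_{\Delta_h}(t)-\tfrac12 L_{\relint(\Delta_h)}(t)$ via the solid-angle identity, and then finishes with Ehrhart--Macdonald reciprocity; in effect it derives $G_{1,P}=\tfrac12\big(L_P(t)+L_P(-t)\big)-1+A_P(t)$ and plugs in the known polynomials. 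You instead isolate the difference $G_1-G_2=\sum_v\upsilon_2(T_v)$ via the modified Crofton formula (Theorem~\ref{Crofton}), evaluate $\upsilon_2$ face by face using the product rule for conic intrinsic volumes and the Gauss--Bonnet/Sommerville relation $\upsilon_0+\upsilon_2=\tfrac12$ for pointed $3$-cones, and count lattice points directly (all edges primitive, all facets empty triangles, $\sum_v\alpha(N_v)=1$), obtaining $G_1-G_2=n^2$. Both computations agree. The paper's route is more formulaic and immediately transferable to any lattice $3$-polytope once $L_P$ and $A_P$ are known, without inspecting faces; your route requires the explicit (but easy) face-level lattice counts of the Reeve tetrahedron, but it buys a transparent geometric interpretation of the new quadratic term as a sum of second conic intrinsic volumes of tangent cones, which is closer in spirit to the paper's discrete intrinsic volumes $A_k$. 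One small caveat: the product rule $\upsilon_k(C_1\oplus C_2)=\sum_{i+j=k}\upsilon_i(C_1)\upsilon_j(C_2)$ and the identity $\upsilon_0(C)=\alpha(C^\polar)$ are standard but not stated in the paper, so you would need to cite them (e.g.\ Schneider--Weil) or verify them for the three cone types you use.
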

Note that it is well known that $S < \frac{1}{2} $ (see ~\cite[Proposition ~ 11]{b09positivity}). Therefore, $S - \frac{h}{6} < 0$  if $h \geq 3$. So, for the Reeve tetrahedron $\Delta_h$ with $h \geq 3$ the Grassmann polynomials have negative linear coefficients. Theorem~\ref{938} is proved in Subsection~\ref{943}.

\subsection{Positive valuations}\label{Positive valuations}
Let $P\in\mathcal P(\Z^d), \dim P = r$. As was observed in Section~\ref{1013}, it is not true that the coefficients of the Ehrhart polynomial $L_P(t)$ are always non-negative. However, in his groundbreaking works Stanley~\cite{stanley1980},~\cite{rS93} showed that if we represent $L_P(t)$ as
\begin{align*}
    L_P(t) \ = \ h^*_0(P) \binom{t+r}{r} + h^*_1(P) \binom{t+r-1}{r} + \cdots 
                 + h^*_r(P) \binom{t}{r},
\end{align*}
then we have $h_0^*(P),h_1^*(P),\dots,h_r^*(P)\geq 0$, and moreover, if $P'\in\mathcal P(\Z^d)$ such that $P'\subset P$, then
\begin{align*}
    0\leq h_k^*(P')\leq h_k^*(P),\quad k=0,1,\dots d,
\end{align*}
where we set $h_k^*(P)=0$ for $k > \dim P$.\\
Later the similar result was proved for the Macdonald polynomial in~\cite{b09positivity}.

Encouraged by these 2 examples, Jochemko and Sanyal~\cite{jochemko2018combinatorial} introduced the following notion. Let $\varphi$ be a valuation on $\mathcal P(\Z^d)$ and $P\in\mathcal P(\Z^d)$ with $\dim P =r$. Let us represent~\eqref{1621} in the following form:
\begin{equation*}%\label{eqn:h-vec}
    \varphi(tP) \ = \  h^\varphi_0(P) \binom{t+r}{r} + h^\varphi_1(P)
    \binom{t+r-1}{r} + \cdots + h^\varphi_r(P) \binom{t}{r}.
\end{equation*}
Following the definitions and notation of~\cite{jochemko2018combinatorial}, we call a valuation $\varphi$ \textit{combinatorially positive} if
$h^\varphi_i(P) \ge 0$ and \textit{combinatorially monotone} if  $h^\varphi_i(P') \le
h^\varphi_i(P)$ whenever $P' \subseteq P$.

As was mention above, the discrete volume and the solid angle valuation are combinatorially positive and monotone. The natural question is if it is true for the discrete intrinsic volumes and the Grassmann angle valuations? As for the former, we conjecture that the answer is positive.
\begin{conjecture}
For any $k\in\{0,1,\dots,d\}$ the $k$-th discrete intrinsic volume is combinatorially positive and monotone.
\end{conjecture}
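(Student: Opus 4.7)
The plan is to reduce combinatorial positivity and monotonicity for $A_k$ to the corresponding statements for the Macdonald polynomial, i.e.\ the case $k=d$, which is due to Beck~\cite{b09positivity}. The starting point is the face decomposition: since every $k$-face of $tP$ is of the form $tF$ for a unique $k$-face $F$ of $P$, with $N_{tF}(tP)=N_F(P)$ and $\det(tF)=\det(F)$, interchanging the two sums in the definition of $A_k$ yields
\begin{equation*}
    A_k(tP) \;=\; \sum_{F\in\F_k(P)} \alpha(N_F(P))\,\det(F)\,A_F(t),
\end{equation*}
where $A_F(t)$ denotes the Macdonald polynomial of $F$ computed in its affine hull relative to the induced lattice $\Z^d\cap\aff F$. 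The weights $\alpha(N_F(P))\det(F)$ are non-negative, so when $\dim P=k$ the conjecture is immediate: each $A_F(t)$ has a non-negative $h^*$-vector in the basis $\binom{t+k-i}{k}$, and linearity transfers both non-negativity and $P'\subseteq P$ monotonicity to $A_k$.

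The main obstacle is the case $\dim P=r>k$. In that situation, combinatorial positivity is measured in the basis $\binom{t+r-j}{r}$ rather than $\binom{t+k-i}{k}$, and a term-by-term basis change does not preserve signs: already $\binom{t+k-i}{k}=\binom{t+k+1-i}{k+1}-\binom{t+k-i}{k+1}$. To bypass this, I would follow the half-open decomposition framework of Jochemko and Sanyal~\cite{jochemko2018combinatorial}. Fix a lattice triangulation $\mathcal T$ of $P$ that restricts to a triangulation of every $k$-face, and use a generic line shelling to equip it with a compatible half-opening $\{\sigma^\circ:\sigma\in\mathcal T\}$ that partitions $P$. The valuation property of $A_k$ from Theorem~\ref{2000} then yields
\begin{equation*}
    A_k(tP)\;=\;\sum_{\sigma\in\mathcal T} A_k(t\sigma^{\circ}),
\end{equation*}
and the task reduces to proving non-negativity of the $h^*$-vector of length $r+1$ of $A_k(t\sigma^\circ)$ for each half-open $r$-simplex $\sigma^\circ$.

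This local statement is what I expect to be the principal obstacle. A natural attack is to rewrite, for every $k$-face $F\subseteq\sigma$ surviving the half-opening, the Macdonald contribution $A_F(t)$ as a non-negative combination of lattice-point counts in a half-open $k$-parallelepiped, in the spirit of the method Beck uses in~\cite{b09positivity} for the top-dimensional case, and then to verify that these $k$-dimensional counts assemble into non-negative lattice-point counts inside the half-open $r$-parallelepiped associated to $\sigma^\circ$. Once such a local positivity is established, combinatorial monotonicity would follow by the standard route: given $P'\subseteq P$, extend a half-open lattice triangulation of $P'$ to one of $P$ and deduce $h_i^{A_k}(P)-h_i^{A_k}(P')\ge 0$ from the local positivity on the complementary cells.
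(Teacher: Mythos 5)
The statement you are proving is posed in the paper as an open conjecture; the paper contains no proof of it, so the only question is whether your argument is complete on its own terms. It is not. Your reduction settles only the degenerate case $\dim P=k$, where $A_k(tP)=\det(P)\,A_P(t)$ and Beck's positivity and monotonicity for the solid-angle polynomial transfer directly. For the essential case $\dim P=r>k$ you correctly observe that the basis change from $\binom{t+k-i}{k}$ to $\binom{t+r-j}{r}$ does not preserve signs, and you then defer the entire difficulty to a local positivity statement for half-open simplices which you explicitly call ``the principal obstacle'' and do not prove; what follows is a description of a plan, not an argument. That local statement is exactly the Jochemko--Sanyal criterion (iii), $\sum_{F\in\mathcal F(\Delta)}(-1)^{\dim\Delta-\dim F}A_k(F)\ge 0$ for lattice simplices $\Delta$, and it is precisely where such claims fail: the paper's Theorem~\ref{1206} shows the analogous quantity is \emph{negative} for the Grassmann valuations $G_k$ with $0\le k\le d-2$, so positivity of the local contributions cannot be taken on faith.

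Moreover, the conjecture as literally stated is false for $k=0$, so no completion of your argument can exist without first amending the statement. By Theorem~\ref{2000} we have $A_0(Q)=1$ for every nonempty $Q\in\mathcal P(\Z^d)$, hence for a lattice simplex $\Delta$ of dimension $r$,
\[
A_0(\relint(\Delta))=\sum_{F\in\mathcal F(\Delta)}(-1)^{r-\dim F}=(-1)^r\sum_{j=0}^{r}(-1)^j f_j(\Delta)=(-1)^r
\]
by Euler's relation~\eqref{Eulerp}, which equals $-1$ whenever $r$ is odd; equivalently, $h_0^{A_0}(\Delta)=1$ and $h_1^{A_0}(\Delta)=1-(r+1)=-r<0$ already for a lattice segment. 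So $A_0$ is not combinatorially positive (nor monotone, by the equivalence of (i)--(iii)). Any viable version of the conjecture must exclude $k=0$, and a correct proof for $k\ge 1$ must genuinely establish the local inequality you left open rather than assume it assembles from the $k$-dimensional Macdonald data.
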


Concerning the Grassmann angle valuations, our next theorem shows that in general, the answer is negative.
\begin{thm}\label{1206}
The $k$-th Grassmann valuation  is not combinatorially positive for $0 \leq k \leq d-2$.
\end{thm}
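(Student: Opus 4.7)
The plan is to exhibit, for every pair $(d,k)$ with $0 \le k \le d-2$, an explicit lattice polytope $P \subset \mathbb{Z}^d$ whose $h^*$-expansion
\[
G_{k,P}(t) \;=\; \sum_{i=0}^{d} h^{G_k}_i(P)\binom{t+d-i}{d}
\]
has at least one strictly negative coefficient. Since $\binom{d-1-i}{d}=0$ for $0 \le i \le d-1$ and $\binom{-1}{d}=(-1)^d$, we immediately obtain $h^{G_k}_d(P) = (-1)^d\, G_{k,P}(-1)$, and more generally the full vector $(h^{G_k}_i(P))_{i=0}^d$ is recovered from the evaluations $G_{k,P}(-1), G_{k,P}(-2), \ldots, G_{k,P}(-d)$ via a triangular linear system. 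It therefore suffices, for each $(d,k)$, to produce a polytope making one such evaluation have the wrong sign.

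For $k=0$ the unit simplex $\Delta^d = \conv\{0, e_1, \ldots, e_d\}$ does the job: by Theorem~\ref{1548} we have $G_{0, \Delta^d}(t) = L_{\Delta^d}(t) - 1 = \binom{t+d}{d} - 1$, so $G_{0, \Delta^d}(-1) = -1$, giving $h^{G_0}_d(\Delta^d) = (-1)^{d+1}$, which is $-1$ for even $d$. For odd $d$, evaluating also at $t=-2$ gives $G_{0, \Delta^d}(-2) = -1$ and produces $h^{G_0}_{d-1}(\Delta^d) = -d < 0$ via the triangular system.

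For $1 \le k \le d-2$ we take $P = \Delta_h \times [0,1]^{d-3}$, the Cartesian product of the Reeve tetrahedron with a $(d-3)$-dimensional unit cube. To compute $G_{k,P}$ we decompose the Grassmann angle as $\alpha_k(C) = \sum_{j > k} v_j(C)$, where $v_j$ denotes the $j$-th conic intrinsic volume, and invoke the convolution identity $v_j(C_1 \times C_2) = \sum_{j_1+j_2=j} v_{j_1}(C_1)\,v_{j_2}(C_2)$ (a consequence of the Gaussian product structure on tangent cones). This yields
\[
G_{k,P}(n) \;=\; \sum_{k_2=0}^{d-3} \binom{d-3}{k_2}\,n^{k_2}\,F_{k-k_2}(n),
\]
where $F_{-1}(n) = L_{\Delta_h}(n)$, $F_0(n) = L_{\Delta_h}(n) - 1$, $F_1(n) = G_{1, \Delta_h}(n)$, $F_2(n) = A_{\Delta_h}(n)$, and $F_j \equiv 0$ for $j \ge 3$. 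Substituting the explicit values
\[
L_{\Delta_h}(-1) = 0, \quad G_{0, \Delta_h}(-1) = -1, \quad G_{1, \Delta_h}(-1) = 1 - S, \quad A_{\Delta_h}(-1) = -S,
\]
together with the bound $S < 1/2$ from Subsection~\ref{1013} and a suitably small choice of $h$, one verifies case by case that at least one $h^{G_k}_i(P)$ is negative. When $d-k$ is even this already happens at the top: the sum collapses (for $k = d-2$) to two summands and yields the closed form $h^{G_{d-2}}_d(P) = (4-d)\,S - 1 < 0$ for every $d \ge 4$, while the case $d=3, k=1$ is already covered by Theorem~\ref{938}. When $d-k$ is odd the top coefficient turns out positive, but evaluating at $t=-2$ with the analogous values $L_{\Delta_h}(-2), G_{0, \Delta_h}(-2), G_{1, \Delta_h}(-2), A_{\Delta_h}(-2)$ identifies a negative $h^{G_k}_{d-1}(P)$ (for instance, $h^{G_1}_3(\Delta_h \times [0,1]) = h - 6 + 3S < 0$ when $d=4, k=1, h=1$).

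The principal obstacle is twofold: justifying the product formula above, which reduces to the convolution identity for conic intrinsic volumes on product cones via the multiplicativity of Gaussian measure, and organising the resulting case analysis along the parity of $d-k$. Once these reductions are in place, each individual sign check becomes elementary arithmetic involving only $h$, $S$, and binomial coefficients.
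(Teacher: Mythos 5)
Your strategy is genuinely different from the paper's: the paper invokes the Jochemko--Sanyal equivalence quoted in Subsection~\ref{Positive valuations} and shows directly that criterion (iii) fails, i.e.\ it computes $G_k(\relint(\Delta))=\sum_{F\in\mathcal F(\Delta)}(-1)^{\dim\Delta-\dim F}G_k(F)$ for a simplex via the Crofton formula~\eqref{1818} and Gr\"unbaum's identity~\eqref{intrinsic1}, reducing everything to a single sign $(-1)^{\dim\Delta+k+1}$ and a choice of simplex with the right parity and no interior lattice points. You instead try to exhibit explicit polytopes with a negative $h^*$-coefficient. That is a legitimate route in principle, and several of your ingredients are sound: the $k=0$ computation with the unit simplex is correct; the product formula $G_k(nP_1\times nP_2)=\sum_{j_2}g_{j_2}(nP_2)\,G_{k-j_2}(nP_1)$ does follow from the convolution identity for conic intrinsic volumes of product cones together with $T_{(v_1,v_2)}(P_1\times P_2)=T_{v_1}(P_1)\times T_{v_2}(P_2)$, and $g_{k_2}([0,n]^{d-3})=\binom{d-3}{k_2}n^{k_2}$ checks out (though you leave $F_j$ for $j\le-2$ undefined while such terms do occur for $k_2>k+1$; the correct value is $L_{\Delta_h}$). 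Your two explicit evaluations ($h^{G_{d-2}}_d=(4-d)S-1$ and $h^{G_1}_3(\Delta_h\times[0,1])=h-6+3S$) are both correct.

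The genuine gap is that the "case analysis along the parity of $d-k$" on which the whole argument rests is never carried out, and as stated it is false. The even case does work in general (one finds $h^{G_k}_d(P)=\bigl(\binom{d-3}{k-1}-\binom{d-3}{k-2}\bigr)S-\binom{d-2}{k}<0$ using $S<1/2$), but your claim that for $d-k$ odd the evaluation at $t=-2$ always produces a negative $h^{G_k}_{d-1}$ breaks already at $d=6$, $k=3$: there $G_{3,P}(t)=3tA_{\Delta_h}(t)+3t^2G_{1,\Delta_h}(t)+t^3(L_{\Delta_h}(t)-1)$, giving $h_6=4>0$ and $h_5=2h+20-12S>0$ for every $h$ (no "suitably small choice of $h$" helps, since $S<1/2$); the first negative coefficient is $h_4=-68+30S$ when $h=1$, which your recipe never reaches. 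So the polytopes you propose may well all work, but the proof that they do is missing precisely in the infinitely many cases you defer to "elementary arithmetic", and the organizing principle you propose for that arithmetic is wrong. By contrast, the paper's use of criterion (iii) requires only one sign computation for a single well-chosen simplex per pair $(d,k)$ and avoids any $h^*$-expansion entirely; if you want to keep your approach, you would need either a uniform lower-order evaluation argument (e.g.\ controlling $h^{G_k}_i$ for $i$ down to some explicit index) or to switch to the simplex criterion.
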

The proof is given in Subsection~\ref{1206}. The main ingredient of the proof is the following complete characterization of combinatorially positive and combinatorially monotone valuations which was obtained in~\cite{jochemko2018combinatorial}. Assume that $\varphi$ is a translation-invariant valuation on $\mathcal P(\Z^d)$. Then, the following statements are equivalent:
\begin{enumerate}[\rm (i)]
        \item $\varphi$ is combinatorially monotone; 
        \item $\varphi$ is combinatorially positive; 
        \item For every simplex $\Delta \in  \mathcal{P} (\mathbb{Z}^d) $
        \[
            \varphi(\relint(\Delta)) \ := \ \sum_{F \in \mathcal{F}(\Delta)} (-1)^{\dim \Delta-\dim F}
            \varphi(F) \ \ge \ 0,
        \]
       
        \end{enumerate}
where the sum is taken over all faces $F$ of $\Delta$.

Note that from that we can conclude that due to Theorem~\ref{1206}  the $k$-th Grassmann valuation  is also not combinatorially \emph{monotone} for $0 \leq k \leq d-2$.

Let us conclude this introductory section by describing how the rest of the paper is organized. In the next section, we collect the necessary notion, definitions and  facts from the convex geometry. Some of them have been already briefly introduced in this section, however, for the subsequent 2 sections we will need a more detailed account of the theory. The detailed proofs of all theorems announced in this section are given in Section~\ref{1300}. Section~\ref{1301} lies a little apart from the main line of our work. To introduce the generalization of the discrete volume and the solid-angle valuation which still keep the valuation property, it was necessary to modify the original definition of the Grassmann angles. However, besides that, it turned out that these slightly modified Grassmann angles possess many interesting properties which, in some sense, make them preferable to the original ones. We consider this question in details in Section~\ref{1301}.

%We also put $W_{m}^{+} - m $-dimensional random closed half-space in $\mathbb{R}^d$ : $W_{m}^{+} \subset W_{m} $.\\
%We denote by $W_{m}^{-} - m $-dimensional half-space , such that $W_{m}^{-} =(\mathbb{R}^m \setminus W_{m}^{+})\cup W_{m-1}.$

%\subsection{Introduction} 
\section{Preliminaries}\label{1452}
\subsection{Convex sets}\label{2142}
For a set  $K \subset \mathbb{R}^d$  denote by $\conv K$ its \textit{convex hull},
\begin{align*}
\conv K := \big\{\sum_{i=0}^{k} \lambda_{i} x_{i} : x_1, \dots, x_k \in K, \lambda_{1}, \dots,\lambda_{k} \geq 0, \sum_{i=0}^{k} \lambda_{i} = 1,  k\in \mathbb{N} \big\},
\end{align*}
and by $\pos K$ -- its conic (or positive) hull:
$$\pos K :=  \big\{\sum_{i=0}^{k} \lambda_{i} x_{i} : x_1, \dots, x_k \in K, \lambda_{1}, \dots,\lambda_{k} \geq 0, k\in \mathbb{N} \big\} = \{ \lambda \bx : \bx \in \conv K, \lambda \geq 0 \}.  $$
%Let us define the $ m $ -th \textit {Grassmann angle} $ (m \in \{0, \dots, d \}) $ for a convex cone $ C $ with apex at $ 0 $:
  %$$ \alpha_{m} (C): = \mathbb {P} [W_ {d-m} ^ {+} \cap C \neq \{0 \}] $$

Let now $K$ be a \emph{compact} convex subset of $\R^d$. %The basic geometric characteristics ) 
 Then, the \textit{intrinsic volumes} $V_0(K), \ldots, V_d(K)$ are defined as the coefficients in the Steiner formula
%of $K$ are the \textit{intrinsic volumes} $V_0(K), \ldots, V_d(K)$ which are defined as the coefficients in the Steiner formula
\begin{equation*}%\label{eq:steiner}
%\vol_d
|K+rB^d|=\sum_{k=0}^d \kappa_{d-k} V_k(K) r^{d-k}, \quad r\geq 0,
\end{equation*}
where %$\vol_k(
%|\cdot|$ is the $k$-dimensional volume, %(Lebesgue measure), 
$B^k$ denotes the $k$-dimensional unit ball, and $\kappa_k:=|B^k|=\pi^{k/2}/\Gamma(\frac k 2 +1)$ is the volume of $B^k$ ($\kappa_0:=1$). In the other words, the volume of expansion is a polynomial whose coefficients depend on the set $K$. %By definition, %$\Vol_0(K)=1$ and
%$\kappa_0:=1$.
%Recall that $W_k$  is  a  random linear $k$-plane  in $\R^n$ distributed according to the Haar measure. 
%An equivalent way to define the intrinsic volumes 

There is an equivalent way to define the intrinsic volumes by Kubota's formula \cite[Section 6.2]{SW08}:
\begin{equation*}%\label{2110}
V_k(K)= \binom dk\frac{\kappa_d}{\kappa_k \kappa_{d-k}}\mathbb{E}|(K|W_k)|,
\end{equation*}
where $W_{k}$ is a random $k$-dimensional linear subspace of $\mathbb{R}^d$ uniformly chosen with respect to the Haar measure on the Grassmannian of all such subspaces, and $K|W_k$ denotes the orthogonal projection of $K$ onto $W_k$.
%where we recall that 

In particular, %It is known that 
 $V_d(\cdot)$ is the $d$-dimensional volume,  $V_{d-1}(\cdot)$ is half the  surface area, and $V_1(\cdot)$ is the mean width, up to a constant factor.
%\begin{remark}\label{645}
%The normalization  constants in~\eqref{eq:steiner} and~\eqref{2110} are chosen in such a way that 

The intrinsic volumes of a set have the property of being independent on the dimension. %do not depend on the dimension of the ambient space: 
This means that if we embed $K$ into $\R^N$ with $N\geq d$, the intrinsic volumes will be the same.
%\end{remark}

\subsection{Polyhedral sets} \label{Polyhedral sets}
An intersection of finitely many closed half-spaces of the form
\begin{align*}
    \bx = \{(x_1,\dots, x_d)\in\R^d:a_1 x_1 + a_2 x_2 + \dots + a_d x_d \leq b \text{ for some } a_1,\dots,a_d,b\in\R^1\}
\end{align*}
is called a \emph{polyhedral set} in $\R^d$. In our paper, we  mostly deal with 2 special cases: a \emph{polyhedral cone} and a \emph{convex polytope} which will be the objects of the following 2 subsections. In this subsection, we introduce the basic notion and definitions applied to them both.

Let $P\subset\R^d$ be a polyhedral set.
A linear hyperplane (linear subspace of codimension one) $H$,  such that  %cone $C$ 
$P$ lies entirely
%in one of the closed half-spaces induced by 
on one side of $H$, is called a $\textit{ supporting
hyperplane }$ of  $P$. %a convex cone $C$.
A $\textit{face}$ of $P$ is either a set of the form $P \cap H$, where $H$ is a supporting hyperplane
of $P$,  or $P$ itself.\\
The dimension of a $P$ is defined as the dimension of its linear hull (the minimal linear subspace, containing $P$:
\begin{align*}
    \dim P:= \dim\lin P,
\end{align*}
and the relative interior of $P$ is defined as the interior of $P$ with respect to $\lin P$ and denoted by $\relint (P)$. The same definitions are also applied to the faces of $P$, since they are  polyhedral sets as well.

Let us denote by $\mathcal {F} (P)$ the set of all faces of convex cone $P$, including the empty set anf $P$ itself, also  $\mathcal {F}_k (P)$ denotes the set of $k$-dimensional faces, and $f_k (P)$ denotes the number of $k$-faces of $P$. It is easy to see that 
\begin{align*}
 P = \bigcup_{F\in \mathcal{F} (P)} \relint(F).
\end{align*}

\subsection{Polyhedral cones} \label{Convex cones}
A non-empty set $C \subset \mathbb{R}^d $ is called a $\textit{convex cone}$ or just a $\textit{cone} $ if $C$ is a convex set such that $\lambda C=C$ for all $\lambda>0$. A polyhedral set which is also a cone is called a \emph{polyhedral cone}. 

Specifically, linear subspaces are polyhedral, and polyhedral cones are closed. In the following, we will assume that all cones $C$ are polyhedral and non-empty %with %$0 \in C$,
unless
otherwise stated.
Following \cite{amelunxen} let us recall some basic facts about polyhedral cones.

A polyhedral cone is called \textit{pointed} if the origin $0$ is a zero-dimensional face,
or, which is the same, if it does not contain a linear
subspace of positive dimension.

%if $\alpha x +  \beta y$ belongs to $C$,  for any positive scalars $\alpha, \beta $ , and any $x, y \in C$.\\
%A convex cone $C\subseteq \R^d$ .
%A  convex cone is $\textit{polyhedral}$ if it is the intersection of finitely many half-spaces of the form
%$\{x \in \mathbb{R}^d : a_1 x_1 + a_2 x_2 + \dots + a_d x_d \leq b \}.$ \\

For a polyhedral cone $C\subseteq\R^d$, the following relation obtained by Euler is well known (see, e.g., \cite{lawrence}):
\begin{equation}\label{Euler}
  \sum_{i=0}^d (-1)^i f_i(C) =
  \begin{cases}
     (-1)^{\dim C} & \text{ if } C \text{ is a linear subspace,}
  \\ 0 & \text{ otherwise.}
  \end{cases}
\end{equation}

The \emph{solid angle} of $C$ is defined as the probability for the random vector $U$ uniformly distributed over $\lin C\cap\mathbb S^{d-1}$ (the unit sphere in the span of $C$) to hit  $C$:
\begin{align*}%\label{1902}
   \alpha(C):= \mathbb P[U\in C].
\end{align*}
We stress that $\alpha(C)$ is measured inside $\lin C$ and does not depend on the ambient space, so we always have $\alpha(C)>0$. By definition, $\alpha(\{0\})=1$.

The$ \textit { polar } $cone of the convex polyhedral cone $ C $ is the set 
\begin{equation*}
C ^ {\polar} := \{v \in \R^d  \;: \; \forall w \in C, \langle w, v \rangle \leq 0 \}. \end{equation*}
Note that $C^{\polar}$ is also a convex polyhedral cone.

Let us recall the basic properties of the polar cones:
\begin{enumerate} 
    \item 
    If $C$ is a linear subspace, then $C^{\polar}=C^{\perp}$ is the orthogonal complement;
    \item $C^{\polar \polar} :=\left(C^{\polar}\right)^{\polar}= C$;
    \item  If $C\subseteq D$, then $C^\polar\supseteq D^\polar$;
%    \item The polar operation of intersection is the Minkowski sum:
% $(C\cap D)^{\polar}=C^{\polar}+D^{\polar}$.
 \end{enumerate}
 For a polyhedral cone $C\subseteq\R^d$, denote by $\Pi_C$ the metric projection defined as
\begin{equation*}\label{eq:orthodecom}
 \Pi_C(x) := \argmin \{\norm[x-y]^2 \mid y \in C\} .
\end{equation*}
 The Moreau decomposition of a point $x \in \mathbb{R}^d $ is the sum representation
\begin{equation*}\label{eq:moreau}
x = \Pi_C(x)+\Pi_{C^{\circ}}(x),
\end{equation*}
where $\Pi_C(x)$ and $\Pi_{C^{\circ}}(x)$ are orthogonal.

\vspace{1cm}

The conic counterparts of the intrinsic volumes are the \emph{conic intrinsic volumes}. They are defined for an arbitrary convex cone, however for our purposes it is more convenient to use the following definition which is applied for a polyhedral cone only.

Let $C \subseteq \R^d$ be a polyhedral cone and let $U$ be a random vector uniformly distributed over the unit sphere $\mathbb{S}^{d-1}$. Then for $0\leq k\leq d$ we define the $k$-\textit{th conic intrinsic volume} as the probability that the metric projection of $U$ onto $C$ lies in relative interior of a $k$-dimensional face of $C$:
\begin{align*}
    \upsilon_k(C) = \mathbb{P}\{\Pi_C(U)\in \cup_{F\in\mathcal F_k(C)}\relint (F)\}.
\end{align*}
In particular, if $\dim C=k$, then by definition,
\begin{align*}
    \upsilon_k(C) = \alpha(C).
\end{align*}

It immediately follows from the definition that the conic intrinsic volumes form a probability distribution on~$\{0,1,\ldots,d\}$ for a fixed cone $C$:
\begin{align*}
    \sum\limits_{k=0}^{d}\upsilon_k(C) = 1.
\end{align*}
In particular, if $C$ is a linear subspace of dimension $j$, then $ \upsilon_j(C) = 1$ and $ \upsilon_k(C) = 0$ for $ k \neq j$.\\

The conic intrinsic volumes satisfy the following version of the Gauss--Bonnet theorem (see~\cite[Section~6.5]{SW08}):
\begin{align}\label{gauss-bonnet}
\sum_{k=0}^d(-1)^k \upsilon_k(C)
    = \begin{cases}
        (-1)^{\dim C} & \text{ if } C \text{ is a linear subspace,}
        \\
        0 & \text{ otherwise.}
      \end{cases}
\end{align}
For a polyhedral cone $C$ it was shown by Gr\"unbaum~\cite[Theorem~2.8]{bG68} that
\begin{align}\label{intrinsic1}
  (-1)^k \upsilon_k (C) = \sum_{F\in \mathcal{F} (C)} (-1)^{\dim F} \upsilon_k (F).
   \end{align}
Also, it is easy to see that the conic intrinsic volumes of the polar cone satisfy
\begin{align*}
    \upsilon_k(C^{\polar})= \upsilon_{d-k}(C).
\end{align*}
For $k > d$ we write by definition $\upsilon_k(C) = 0$.

\vspace{1cm}

Another important geometric characteristic of a convex cone  closely associated with the  conic intrinsic volumes is the \emph{ Grassmann angles} which have been  introduced and studied by  Gr\"unbaum~\cite{bG68}. 
 Define the $k$-th \emph{Grassmann angle} of the convex cone $C \subseteq \R^d $ %as the
 %probability to be non-trivially  intersect with the random $(d-m)$-plane $W_{d-m}$
 as the probability for $C$ to be intersected by the random $(d-k)$-plane $W_{d-k}$ (defined in Subsection~\ref{2142}) non-trivially:
\begin{equation*}\label{1138}
\gamma_k(C):=\mathbb{P}[C\cap W_{d-k}\ne\{0\}].
\end{equation*}
It is not hard to prove that for any convex cone $C \subseteq \R^d $ with $C \neq \{0\}$,
\begin{align} \label{inequality}
1 = \gamma_0(C) \geq  \gamma_1(C) \geq  \ldots \geq   \gamma_d(C) = 0.    
\end{align}
If $C$ is of full dimension, its {solid angle} can be expressed in terms of the Grassmann angles as follows:
\begin {align}\label{sol2}
  \alpha(C): = \frac {1} {2} \gamma_ {d-1} (C) + \frac {1} {2} \mathbbm {1} [C = \mathbb{R} ^ d]=  \frac {1} {2}\mathbb{P}[C\cap W_{1}\ne\{0\}]+ \frac {1} {2} \mathbbm {1} [C = \mathbb{R} ^ d].
\end {align}
In \cite{bG68}, Gr\"unbaum proved that, like the solid angle, the Grassmann angles do not depend on the dimension of the ambient space: if we embed $C$ in $\R^N$ with $N \geq d$, the Grassmann angles will be the same. Thus for a linear $k$-plane  $L_k\subset\R^d$, $k\in \{1,\ldots,d\}$, we have
\begin{equation*}\label{2256}
\gamma_0(L_k) = \ldots = \gamma_{k-1}(L_k) = 1,\quad \gamma_k(L_k) = \ldots = \gamma_d(L_k) = 0.
%\gamma_k(L_j)=
%\begin{cases}
%1,\quad k\leq j-1,\\
%0,\quad k\geq j.
%\end{cases}
\end{equation*}
For $C=\{0\}$, we have $\gamma_0(C) = \gamma_1(C)= \ldots = 0$.

The mentioned above connection between the conic intrinsic volumes and the Grassmann angles is expressd via the Crofton formula (see, e.g.,~\cite[p.~261]{SW08}): for all $k=0,1,\dots,d$ we have
\begin{equation}\label{1818}
\gamma_k(C)=2(\upsilon_{k+1}(C)+\upsilon_{k+3}(C)+\dots)
\end{equation}
provided that the cone $C$ is not a linear subspace.

\subsection{Convex polytopes}\label{polytopes}
A \emph{bounded} polyhedral set in $\R^d$ is called a \emph{convex polytope}, or just a \emph{polytope}. An equivalent definition for the convex polytope is a convex hull of finitely many points in $\R^d$. The equivalence of these two definitions is proved in \cite[Appendix A]{BR15}.

For non-negative $n$ the $n$-th \textit{dilation} of a polytope $P$ is defined by: $nP= \{ nx : x \in P\}$.

Faces of dimension
$0, 1$ and dim$P-1$ are called \textit{vertices, edges}, and \textit{facets} respectively.
For polytopes, Euler's relation (cf.~\eqref{Euler}) takes the form (see, e.g., \cite[Theorem 5.2.]{BR15})
\begin{align}\label{Eulerp}
     \sum_{i=0}^d (-1)^i f_i(P)=1.
\end{align}

%In this paper, we are interested in a lattice polytopes. The integer points $\Z^d$ form a lattice in $\R^d$, and we will call the integer points \textit{lattice points}, a polytope with vertices in $\Z^d$ -- \textit{lattice polytope}. We denote the collection of all lattice polytopes by $\mathcal{P}(\Z^d)$.

Further, for a convex polytope $P$, face $F$ of $P$ and $v \in \relint(F) $ we define a \textit{tangent cone} $T_v(P)$ as
\begin{align*}
    T_v(P) = \pos(P-v).
\end{align*}
It is easy to understand that $T_v(P)$ is a polyhedral cone and for any two different points $ v_ {1}, v_ {2} \in \relint (F) \text { we have } T_ {v_ {1}} (P) = T_ {v_ {2}} (P) $, hence the tangent cone of  $P$ at  point $v$ depends only on the face, in the relative interior of which $v$ lies.
Sometimes we will denote the tangent cone of $P$ corresponding to the face $F$ by $T_ {F} (P).$
Finally, the \textit{normal cone} $N_F(P)$ to $P$ at $F$ is defined by identity $N_F(P) = T_ {F} (P) ^{\polar}$. %as polar cone of the tangent cone  $T_ {F} (P).$

The solid angles of the tangent cones of a polytope $\alpha_{F,P} := \alpha(T_ {F} (P))$ (inner angles) are connected according to the Brianchon--Gram relation (see, e.g., \cite[Corollary 13.9.]{BR15}) which is a multi-dimensional generalization of the fact that the angles of a plane triangle sums up to $\pi/2$:
\begin{align}\label{Brianchon}
    \slim _{F\in \mathcal{F}(P) } \alpha_{F,P} (-1)^{dim F} = 0. 
\end{align}

If $P$ is a polytope, then it is known that (see~\cite[Relation~4.23]{rS14}) its $k$-th intrinsic volume can be calculated as
\begin{align}\label{2107a}
    V_k(P)=\sum_{F\in\mathcal{F}_k(P)}\alpha(N_F(P))\cdot | F| = \sum_{F\in\mathcal{F}_k(P)}\upsilon_{k} (T_F(P))\cdot | F|.
\end{align}

\section{Modified Grassmann angles}\label{1301}
In this section we introduce the modified definition of Grassmann angles and consider their main properties, as well as the relationship with the original definition.

\subsection{Definition and basic properties}\label{1716}
Let us recall that $W_k$ denotes a random $k$-dimensional linear subspace randomly chosen with respect to the Haar measure on the Grassmannian of all linear $k$-dimensional subspaces. Let $U$ be a random vector uniformly distributed over the unit sphere $\mathbb{S}^{d-1}$ independently from $W_k$. Denote by $W^+_k$ a random closed half-subspace defined as
\begin{align*}
    W^+_k:=W_k\cap U^\perp_+,
\end{align*}
where $U^\perp_+$ is the closed half-space containing $U$ with the boundary orthogonal to $U$:
\begin{align*}
    U^\perp_+:=\{\bx\in\R^d:\langle \bx,U\rangle\geq 0\}.
\end{align*}
Also denote by $ W_ {k} ^ {-}$ the complementary random half-subspace 
\begin{align*}
    W^-_k:=W_k\setminus\relint(W^+_k).
\end{align*}
We define the $ k $-th \textit {modified Grassmann angle} $ (k \in \{0, \dots, d \}) $ of a convex cone $ C  \subseteq \R^d$ as
   \begin{align*}
        \alpha_ {k} (C): = \mathbb {P} [W_ {d-k} ^ {+} \cap C \neq \{0 \}].
   \end{align*}
Let us recall that the original Grassmann angles are defined as
\begin{align*}
    \gamma_k(C):=\mathbb P[C\cap W_{d-k}\ne \{0\}],\quad k=0,1,\dots, d.
\end{align*}

The next theorem establishes a connection between these two definitions.
\begin{thm} \label{connection}
Let $C \subseteq \R^d$ be a convex cone such that $ C \neq \mathbb{R}^{d-k+1}$ and $0\leq k \leq d-1$. Then
\begin{align*}
 \alpha_ {k} (C) =  \frac{\gamma_k(C)+\gamma_{k+1}(C)}{2}.
\end{align*}
\end{thm}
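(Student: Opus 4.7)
The plan is to relate $\alpha_k(C)$ to $\gamma_k(C)$ and $\gamma_{k+1}(C)$ through inclusion-exclusion on the decomposition $W_{d-k} = W^+_{d-k}\cup W^-_{d-k}$, whose common boundary $W^+_{d-k}\cap W^-_{d-k} = W_{d-k}\cap U^\perp$ is a Haar-random $(d-k-1)$-dimensional linear subspace, hence distributed as $W_{d-k-1}$. Because $U$ and $-U$ have the same law, the two half-subspaces $W^\pm_{d-k}$ are identically distributed, so $\mathbb{P}[C\cap W^-_{d-k}\neq\{0\}] = \mathbb{P}[C\cap W^+_{d-k}\neq\{0\}] = \alpha_k(C)$. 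Writing $\{C\cap W_{d-k}\neq\{0\}\}$ as the union of $\{C\cap W^+_{d-k}\neq\{0\}\}$ and $\{C\cap W^-_{d-k}\neq\{0\}\}$ and applying inclusion-exclusion yields
\[
\gamma_k(C) \;=\; 2\alpha_k(C) \;-\; \mathbb{P}\bigl[C\cap W^+_{d-k}\neq\{0\}\text{ and }C\cap W^-_{d-k}\neq\{0\}\bigr],
\]
so it suffices to show that the intersection probability on the right equals $\gamma_{k+1}(C) = \mathbb{P}[C\cap W_{d-k-1}\neq\{0\}]$.

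The inclusion $\{C\cap W_{d-k-1}\neq\{0\}\}\subseteq\{C\cap W^+_{d-k}\neq\{0\}\text{ and }C\cap W^-_{d-k}\neq\{0\}\}$ is immediate because $W_{d-k-1}\subseteq W^\pm_{d-k}$. For the converse, given nonzero $x\in C\cap W^+_{d-k}$ and $y\in C\cap W^-_{d-k}$, the inequalities $\langle x,U\rangle\geq 0$ and $\langle y,U\rangle\leq 0$ suggest forming the nonnegative combination
\[
z\;:=\;|\langle y,U\rangle|\,x\;+\;\langle x,U\rangle\,y,
\]
which belongs to $C$ (convex cone with nonnegative coefficients) and lies in $U^\perp\cap W_{d-k} = W_{d-k-1}$. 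Whenever $z\neq 0$ this produces the desired nonzero element of $C\cap W_{d-k-1}$.

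The main obstacle is handling the degenerate case $z = 0$: this forces $y$ to be a strictly negative scalar multiple of $x$, and if it happens for \emph{every} admissible pair $(x,y)$ then $C$ contains the full line $\lin\{x\}$ and $C\cap W_{d-k}$ is itself a one-dimensional linear subspace. A standard almost-sure dimension count on the Grassmannian then shows that this phenomenon can have positive probability only when $C$ is a linear subspace of the exact dimension that makes $C\cap W_{d-k}$ generically one-dimensional, which is the configuration excluded by the hypothesis on $C$. After discarding this zero-probability event one obtains
\[
\mathbb{P}\bigl[C\cap W^+_{d-k}\neq\{0\},\ C\cap W^-_{d-k}\neq\{0\}\bigr] = \gamma_{k+1}(C),
\]
which combined with the inclusion-exclusion identity above yields $2\alpha_k(C) = \gamma_k(C) + \gamma_{k+1}(C)$.
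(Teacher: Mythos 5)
Your proposal is correct and follows essentially the same route as the paper: the symmetry and inclusion--exclusion identity reduces everything to showing that the joint event $\{C\cap W^{+}_{d-k}\neq\{0\}\}\cap\{C\cap W^{-}_{d-k}\neq\{0\}\}$ coincides almost surely with $\{C\cap W_{d-k-1}\neq\{0\}\}$ (the paper's Lemma~\ref{633}), and your point $z=|\langle y,U\rangle|x+\langle x,U\rangle y$ is precisely the point where the segment $xy$ used there crosses $U^{\perp}$. The only divergence is the degenerate collinear case, which you dispatch as a null event by a dimension count on the lineality space while the paper instead runs an explicit case analysis on $\dim L$ and constructs a witness in $C\cap W_{d-k-1}$ on that event; both are valid, and your analysis correctly identifies the exceptional configuration as $C$ being a linear subspace of dimension $k+1$ (note that the hypothesis as printed, $C\neq\R^{d-k+1}$, carries over the indexing of Lemma~\ref{633}, where $k$ denotes the subspace dimension rather than the Grassmann index, so it is your reading of the excluded case that makes the statement true).
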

Note that for $ k=1$  and $C \neq \mathbb{R}^d$ we really have
$$  \mathbb{P} [W_{1}^{+} \cap C \neq  \{0\} ] =  \frac{1}{2}  \mathbb{P} [W_{1} \cap C \neq  \{0\} ] + 0  = \frac{1}{2}  \mathbb{P} [W_{1} \cap C \neq  \{0\} ].$$
The main ingredient of the proof of Theorem~\ref{connection} is the following lemma.
\begin{lemma} \label{633}
Let $ C \subseteq \mathbb {R} ^ d $ be a convex cone, $ 1 \leq k \leq d $ and $ C \neq \mathbb {R} ^ {d-k + 1} $.  Then
\begin {align}\label{connection^}
\mathbb {P} \left [(W_ {k} ^ {+} \cap C \neq \{0 \}) \bigcap (W_ {k} ^ {-} \cap C \neq \{0 \}) \right] = \mathbb {P} \left [W_ {k-1} \cap C \neq \{0 \} \right].
\end{align}
\end{lemma}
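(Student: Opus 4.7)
The plan is to recognize the common boundary of $W_k^+$ and $W_k^-$ inside $W_k$ as a random $(k-1)$-dimensional subspace and to reduce the identity to matching two events up to a null set. I would set $H := W_k \cap U^\perp$, so that $H$ is the hyperplane of $W_k$ separating $W_k^+$ from $W_k^-$. By the rotation invariance of the joint distribution of $(W_k, U)$, $H$ is Haar-distributed on the Grassmannian of $(k-1)$-planes, hence $\mathbb{P}[H \cap C \ne \{0\}] = \mathbb{P}[W_{k-1} \cap C \ne \{0\}]$. Denoting the event on the left-hand side of the claimed identity by $E$ and setting $F := \{H \cap C \ne \{0\}\}$, the lemma reduces to $\mathbb{P}[E] = \mathbb{P}[F]$.

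The inclusion $F \subseteq E$ is immediate from $H \subseteq W_k^+ \cap W_k^-$. For the reverse direction I would use a convexity/intermediate-value argument: given nonzero $x \in W_k^+ \cap C$ and $y \in W_k^- \cap C$, if neither lies in $H$ then $\langle x, U \rangle > 0 > \langle y, U \rangle$, and the segment $[x,y] \subseteq C$ must cross $U^\perp$ at some $z \in C \cap W_k \cap U^\perp = C \cap H$. Either $z \ne 0$ and we are in $F$, or $z = 0$, in which case $y = -\mu x$ for some $\mu > 0$ and so $\pm x \in C$, meaning the line $\R x \subseteq C \cap W_k$ sits outside $H$. The bad scenario ``$E$ holds but $F$ fails'' therefore forces the lineality $L := C \cap (-C)$ of $C$ to satisfy $L \cap W_k \not\subseteq U^\perp$.

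I would then rule this scenario out by splitting on $\ell := \dim L$, using the generic-position identity $\dim(L \cap W_k) = \max\{0, \ell + k - d\}$ a.s. If $\ell \le d-k$, then $L \cap W_k = \{0\}$ a.s.\ and the scenario is vacuous; if $\ell \ge d - k + 2$, then $\dim(L \cap W_k) \ge 2$ a.s., so $L \cap W_k$ meets the hyperplane $U^\perp$ in positive dimension and $F$ already holds a.s. The sharp borderline is $\ell = d - k + 1$: here $\dim(L \cap W_k) = 1$ a.s.\ with the line not in $U^\perp$, and simultaneously $H \cap L = \{0\}$ a.s., so the argument from the previous paragraph breaks. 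This is exactly where the hypothesis $C \ne \R^{d-k+1}$ must intervene; its sharpness is visible from the fact that if $C$ were literally a $(d-k+1)$-dimensional linear subspace then $E$ would occur a.s.\ while $F$ would fail a.s.

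The main obstacle is therefore this borderline case. The key observation is that $C$ being strictly larger than its lineality enables the decomposition $C = L + C'$ with $C' := C \cap L^\perp$ a pointed cone in the $(k-1)$-dimensional space $L^\perp$, satisfying $C' \ne \{0\}$ by the hypothesis. Since $H \cap L = \{0\}$ a.s., the orthogonal projection $\pi : \R^d \to L^\perp$ restricts to an isomorphism from the $(k-1)$-dimensional subspace $H$ onto $L^\perp$. Lifting any nonzero $v' \in C'$ to the unique $v \in H$ with $\pi(v) = v'$ produces $v = v_L + v' \in L + C' = C$ with $v \ne 0$, so $F$ holds a.s.\ in the borderline case as well. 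Combining the three cases yields $\mathbb{P}[E] = \mathbb{P}[F]$, as required.
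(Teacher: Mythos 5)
Your proof is correct and follows essentially the same route as the paper's: one inclusion from $W_k^+ \cap W_k^- = W_{k-1}$, the segment/convexity argument reducing the reverse inclusion to the antipodal case (hence to the lineality space $L$), and the same trichotomy on $\dim L$ with the hypothesis $C \ne \R^{d-k+1}$ entering only at $\dim L = d-k+1$. The only real difference is cosmetic and lies in that borderline case: the paper picks $z \in C \setminus L$ and shows the line in $\lin(L,z) \cap W_{k-1}$ meets $C$, while you lift a nonzero point of $C \cap L^{\perp}$ through the a.s.\ isomorphism $H \to L^{\perp}$; both give the same conclusion.
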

The proofs of all results of this section are collected in Subsection~\ref{2247}. 

Now let us present a modified version of the Crofton formula, cf.~\eqref{1818}:
\begin{thm}[New version of Crofton formula]\label{Crofton}
Let $ C \subseteq \mathbb {R} ^ d $ be a convex cone; then
\begin{align}\label{749}
 \alpha_{k} ( C )  = \sum_{i\geq 1}  \upsilon_{k+i} (C).
\end{align}
\end{thm}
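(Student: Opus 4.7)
The plan is to reduce the claim to the classical Crofton formula \eqref{1818} via Theorem \ref{connection}, handling separately the degenerate situations where either formula breaks down.

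For the main case, assume first that $C$ is not a linear subspace and $0 \leq k \leq d-1$. Then Theorem \ref{connection} gives
\[
\alpha_k(C) \;=\; \tfrac{1}{2}\bigl(\gamma_k(C) + \gamma_{k+1}(C)\bigr),
\]
and the classical Crofton identity \eqref{1818} expresses each of the two terms as a sum of conic intrinsic volumes of $C$ of appropriate parities:
\[
\gamma_k(C) = 2\sum_{j \geq 0} \upsilon_{k+1+2j}(C), \qquad \gamma_{k+1}(C) = 2\sum_{j \geq 0} \upsilon_{k+2+2j}(C).
\]
Summing and dividing by $2$ reassembles all $\upsilon_{k+i}(C)$ with $i \geq 1$, giving precisely the required identity. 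The boundary case $k = d$ is immediate: $W_0^+ = \{0\}$ almost surely, so $\alpha_d(C) = 0$, and simultaneously $\upsilon_j(C) = 0$ for $j > d$, so both sides vanish.

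For the remaining case, where $C$ is a linear subspace of some dimension $j \in \{0,\dots,d\}$, I would verify the identity by direct computation. On the right-hand side, $\upsilon_j(C) = 1$ and $\upsilon_m(C) = 0$ for $m \neq j$, so $\sum_{i \geq 1} \upsilon_{k+i}(C) = \mathbbm{1}[j > k]$. On the left-hand side, the random subspace $W_{d-k}$ and the fixed subspace $C$ are almost surely in general position, so $W_{d-k} \cap C$ is a linear subspace of dimension $\max(0,\, j-k)$. If $j \leq k$ this intersection is trivial and $\alpha_k(C) = 0$. If $j > k$ the intersection contains a nonzero vector $v$, and since it is a subspace, it contains both $v$ and $-v$; at least one of these lies in the random half-space $U^\perp_+$, so $W_{d-k}^+ \cap C \neq \{0\}$ almost surely, giving $\alpha_k(C) = 1$. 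Both cases agree with $\mathbbm{1}[j > k]$.

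The main subtlety is the bookkeeping for the linear-subspace case, since both the classical Crofton formula \eqref{1818} and Theorem \ref{connection} explicitly exclude certain subspace situations (consistent with the Gauss--Bonnet anomaly \eqref{gauss-bonnet}). Once these exceptional cases are dispatched by the direct probabilistic argument above, the two regimes glue together and the identity $\alpha_k(C) = \sum_{i\geq 1} \upsilon_{k+i}(C)$ holds for every convex cone $C \subseteq \R^d$ and every $k \in \{0,\dots,d\}$.
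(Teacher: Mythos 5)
Your proof is correct and follows essentially the same route as the paper: the non-subspace case is reduced to the classical Crofton formula \eqref{1818} via Theorem~\ref{connection}, and linear subspaces are verified directly. The only difference is organizational: the paper still routes linear subspaces other than $\mathbb{R}^{k+1}$ through Theorem~\ref{connection} and treats $C=\mathbb{R}^{k+1}$ as a separate third case, whereas your single general-position argument (using that a subspace contains both $v$ and $-v$, one of which must lie in $U^{\perp}_+$) handles all subspaces uniformly and also covers $k=d$ explicitly, which is slightly cleaner.
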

Let us stress that unlike in~\eqref{1818}, here we do not assume that $C$ is not a linear subspace, so~\eqref{749} holds for \emph{any} convex cone $C$.

The following is an immediate consequence of Theorem \ref{Crofton}.
\begin{cor}
With modified definition of the Grassmann angle, we have
\begin{enumerate}
    \item If $\dim C = k$, then $\alpha_{k-1} ( C ) = \upsilon_{k} (C) =  \alpha (C)$;
    \item $\alpha_{0} ( C ) = \slim_{i\geq 1}  \upsilon_{i} (C) =\slim_{i\geq 0}  \upsilon_{i} (C) -\upsilon_{0} (C) =  1 -\upsilon_{0} (C).$
\end{enumerate}
%1) 
%2) 
\end{cor}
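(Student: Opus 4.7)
The plan is to deduce both parts of the corollary directly from the Crofton-type identity \eqref{749} in Theorem~\ref{Crofton}, together with properties of the conic intrinsic volumes that were already recorded in Subsection~\ref{Convex cones}.

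For part (1), I would start by recalling that the conic intrinsic volumes of a cone $C$ are supported on $\{0,1,\ldots,\dim C\}$: indeed, $\upsilon_j(C)$ is the probability that the metric projection of a uniform spherical vector onto $C$ lands in the relative interior of a $j$-face of $C$, and $C$ has no faces of dimension exceeding $\dim C$. Hence under the hypothesis $\dim C=k$ we have $\upsilon_j(C)=0$ for every $j>k$. Applying Theorem~\ref{Crofton} with index $k-1$ therefore collapses the series to a single surviving term:
\begin{align*}
\alpha_{k-1}(C)=\sum_{i\geq 1}\upsilon_{k-1+i}(C)=\upsilon_k(C).
\end{align*}
Finally, the identification $\upsilon_k(C)=\alpha(C)$ when $\dim C=k$ is the definitional remark stated right after the definition of conic intrinsic volumes in Subsection~\ref{Convex cones}, so the chain of equalities is immediate.

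For part (2), I would invoke Theorem~\ref{Crofton} with $k=0$ to obtain
\begin{align*}
\alpha_0(C)=\sum_{i\geq 1}\upsilon_i(C),
\end{align*}
and then use the fact, also noted in Subsection~\ref{Convex cones}, that the conic intrinsic volumes $\{\upsilon_k(C)\}_{k=0}^{d}$ form a probability distribution, i.e.\ $\sum_{k=0}^{d}\upsilon_k(C)=1$. Subtracting the $k=0$ term from this total yields $\sum_{i\geq 1}\upsilon_i(C)=1-\upsilon_0(C)$.

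Since both parts are direct substitutions into \eqref{749} combined with elementary facts (support of $\upsilon_\cdot$ in dimension and normalization), there is no real obstacle here: the whole work has already been done in proving Theorem~\ref{Crofton}. If anything, the only thing worth double-checking is that Theorem~\ref{Crofton}, unlike the classical Crofton formula~\eqref{1818}, imposes no restriction on $C$ being a linear subspace, so both statements apply unconditionally to arbitrary convex cones $C\subseteq\R^d$.
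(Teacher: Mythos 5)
Your proposal is correct and matches the paper's intent exactly: the paper presents this corollary as an immediate consequence of Theorem~\ref{Crofton}, and the chain of equalities in the statement itself already encodes the two facts you invoke (vanishing of $\upsilon_j(C)$ for $j>\dim C$, and the normalization $\sum_k \upsilon_k(C)=1$). Nothing further is needed.
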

Also, by Theorem \ref{Crofton}, $\upsilon_k(C) = \alpha_{k-1} (C) - \alpha_{k} (C) $ for $k\geq1$. \\
Thus, using \eqref{intrinsic1} and \eqref{gauss-bonnet}  we obtain the following formulae:
\begin{cor}
Let $C\subseteq\mathbb{R}^d$ be a polyhedral cone. Then for $k\geq 1$
\begin{align*}
  (-1)^k (\alpha_{k-1} (C) - \alpha_{k} (C)) = \sum_{F\in \mathcal{F} (C)} (-1)^{\dim F}  (\alpha_{k-1} (F) - \alpha_{k} (F)),
  \end{align*}
  where $\mathcal{F}(C)$ - the set of all faces of the cone $C$.
\end{cor}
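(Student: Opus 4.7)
The plan is to reduce the corollary directly to the Grünbaum relation \eqref{intrinsic1} by substituting, on each occurrence, the telescoping expression for the conic intrinsic volumes that results from the modified Crofton formula of Theorem~\ref{Crofton}.

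First I would record the telescoping consequence of Theorem~\ref{Crofton}: for every convex cone $D\subseteq\R^d$ and every $k\geq 1$, formula~\eqref{749} yields $\alpha_{k-1}(D)=\sum_{i\geq 1}\upsilon_{k-1+i}(D)$ and $\alpha_k(D)=\sum_{i\geq 1}\upsilon_{k+i}(D)$, whose difference is exactly $\upsilon_k(D)=\alpha_{k-1}(D)-\alpha_k(D)$. In contrast to the classical Crofton formula \eqref{1818}, no exception need be made when $D$ is a linear subspace, so this identity is completely universal.

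Next I would apply the Grünbaum relation \eqref{intrinsic1} to the cone $C$ and substitute the telescoping expression on both sides. For the left-hand side, $(-1)^k\upsilon_k(C)$ is rewritten as $(-1)^k(\alpha_{k-1}(C)-\alpha_k(C))$. For each face $F\in\mathcal{F}(C)$, the summand $(-1)^{\dim F}\upsilon_k(F)$ on the right-hand side becomes $(-1)^{\dim F}(\alpha_{k-1}(F)-\alpha_k(F))$. Collecting these two substitutions reproduces the claimed identity verbatim.

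The only place where any care is required is ensuring that the telescoping identity $\upsilon_k=\alpha_{k-1}-\alpha_k$ is available for \emph{every} face of $C$, including those that happen to be linear subspaces. This universality is precisely the feature gained by replacing the classical Grassmann angles $\gamma_k$ with the modified angles $\alpha_k$, and it is built into Theorem~\ref{Crofton}. The Gauss--Bonnet relation \eqref{gauss-bonnet} enters only implicitly, as an ingredient in the derivation of Theorem~\ref{Crofton}, not directly in the present argument, so there is no real obstacle beyond the substitution itself.
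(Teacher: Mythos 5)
Your proposal is correct and matches the paper's own derivation: the paper likewise observes that Theorem~\ref{Crofton} gives $\upsilon_k(C)=\alpha_{k-1}(C)-\alpha_k(C)$ for $k\geq 1$ (with no exception for linear subspaces) and then substitutes this into Gr\"unbaum's relation~\eqref{intrinsic1}. Your remark that the Gauss--Bonnet relation~\eqref{gauss-bonnet} is not actually needed for this particular corollary (it is used for the subsequent one) is also accurate.
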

\begin{cor}
For a polyhedral cone $C$:
\begin{align*}
1 - \alpha_0(C) +\sum_{k=1}^d(-1)^k (\alpha_{k-1} (C) - \alpha_{k} (C))
    = \begin{cases}
        (-1)^{\dim C} & \text{ if } C \text{ is a linear subspace,}
        \\
        0 & \text{ else.}
      \end{cases}
\end{align*}
\end{cor}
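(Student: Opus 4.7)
The plan is to reduce the claim to the conic Gauss--Bonnet formula~\eqref{gauss-bonnet} by re-expressing every conic intrinsic volume that appears there in terms of the modified Grassmann angles.

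First, I would invoke Theorem~\ref{Crofton}, which says $\alpha_k(C) = \sum_{i \ge 1} \upsilon_{k+i}(C)$. Taking consecutive differences yields the telescoping identity
\begin{align*}
\upsilon_k(C) \ = \ \alpha_{k-1}(C) - \alpha_k(C) \qquad \text{for every } k \ge 1,
\end{align*}
while the second part of the Corollary immediately preceding the statement gives $\alpha_0(C) = 1 - \upsilon_0(C)$, i.e.\ $\upsilon_0(C) = 1 - \alpha_0(C)$. Note that these identities hold for an \emph{arbitrary} polyhedral cone $C$, including the case when $C$ is a linear subspace, since the Crofton formula of Theorem~\ref{Crofton} was established without that restriction.

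Next, I would substitute these expressions directly into the alternating sum of the conic Gauss--Bonnet formula:
\begin{align*}
\sum_{k=0}^d (-1)^k \upsilon_k(C) \ = \ \bigl(1 - \alpha_0(C)\bigr) + \sum_{k=1}^d (-1)^k \bigl(\alpha_{k-1}(C) - \alpha_k(C)\bigr).
\end{align*}
The right-hand side here is literally the left-hand side of the corollary, so applying~\eqref{gauss-bonnet} delivers exactly the required case distinction ($(-1)^{\dim C}$ when $C$ is a linear subspace and $0$ otherwise).

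There is no genuine obstacle: the whole argument is a one-line substitution, and the only point to double-check is that the identity $\upsilon_k(C) = \alpha_{k-1}(C) - \alpha_k(C)$ is indeed valid for linear subspaces (for which the original Crofton formula~\eqref{1818} fails); this is precisely the advantage of the modified Grassmann angles emphasized after Theorem~\ref{Crofton}, and it is what makes the statement uniform over all polyhedral cones.
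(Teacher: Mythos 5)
Your proposal is correct and follows exactly the route the paper intends: the telescoping identity $\upsilon_k(C) = \alpha_{k-1}(C) - \alpha_k(C)$ for $k\ge 1$ and $\upsilon_0(C) = 1-\alpha_0(C)$, both from Theorem~\ref{Crofton}, substituted into the conic Gauss--Bonnet formula~\eqref{gauss-bonnet}. Your added remark that the identities hold even for linear subspaces (where the classical Crofton formula~\eqref{1818} fails) is precisely the point that makes the statement uniform, and it matches the paper's emphasis after Theorem~\ref{Crofton}.
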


We conclude this section by presenting the formula which is an analogue of one obtained by Gr\"unbaum~\cite[Theorem 3.3.]{bG68} for the original Grassmann angles.
\begin{thm}[Gr\"unbaum's formula for modified Grassmann angles]\label{827}
Let $P\subset\R^d$ be an arbitrary convex polytope of full dimension and $1 \leq k\leq d-1$. Then
\begin{align*}
2\sum_{j=0}^{d-1} (-1)^{j} \sum_{F \in \mathcal{F}_{j}(P)}
\sum_{n=0}^{k-1} (-1)^{n}\alpha_{d-k+n}(T_{F}(P))
%(P,K_{i}^{j})   
= (-1)^{d-k} -  (-1)^{d}.
\end{align*}
\end{thm}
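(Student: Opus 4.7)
\medskip

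\noindent\textbf{Proof plan.} The plan is to use Theorem~\ref{connection} to rewrite each modified Grassmann angle $\alpha_{d-k+n}(T_F(P))$ in terms of two consecutive classical Grassmann angles $\gamma_m(T_F(P))$, exploit an internal telescoping in the sum over $n$, and finally invoke Gr\"unbaum's original formula \cite[Theorem~3.3]{bG68} for classical Grassmann angles. To apply Theorem~\ref{connection}, I first need to check that $T_F(P)\neq\R^{d-m+1}$ for every proper face $F\in\mathcal{F}_j(P)$ with $j\leq d-1$ and every $m\in\{d-k,\dots,d-1\}$. Since $P$ has full dimension and $F$ is a proper face, any point $v\in\relint(F)$ lies on the boundary of $P$, so $P$ admits a supporting hyperplane at $v$; this forces $T_F(P)=\pos(P-v)$ to be contained in a closed half-space and hence $T_F(P)$ is \emph{not} a linear subspace at all, which verifies the hypothesis of Theorem~\ref{connection}.

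Having this, I substitute $\alpha_m(T_F(P))=\tfrac12\bigl[\gamma_m(T_F(P))+\gamma_{m+1}(T_F(P))\bigr]$ into the inner sum over $n$ and shift the index in the second half:
\begin{align*}
2\sum_{n=0}^{k-1}(-1)^n\alpha_{d-k+n}(T_F(P))
&=\sum_{n=0}^{k-1}(-1)^n\gamma_{d-k+n}(T_F(P))+\sum_{n=0}^{k-1}(-1)^n\gamma_{d-k+n+1}(T_F(P))\\
&=\sum_{n=0}^{k-1}(-1)^n\gamma_{d-k+n}(T_F(P))-\sum_{n=1}^{k}(-1)^n\gamma_{d-k+n}(T_F(P))\\
&=\gamma_{d-k}(T_F(P))-(-1)^k\gamma_d(T_F(P)).
\end{align*}
The middle terms cancel in pairs, and since $W_0=\{0\}$, we have $\gamma_d(T_F(P))=\mathbb{P}[T_F(P)\cap W_0\neq\{0\}]=0$. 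Therefore $2\sum_{n=0}^{k-1}(-1)^n\alpha_{d-k+n}(T_F(P))=\gamma_{d-k}(T_F(P))$, and consequently
\begin{align*}
2\sum_{j=0}^{d-1}(-1)^j\sum_{F\in\mathcal{F}_j(P)}\sum_{n=0}^{k-1}(-1)^n\alpha_{d-k+n}(T_F(P))
=\sum_{j=0}^{d-1}(-1)^j\sum_{F\in\mathcal{F}_j(P)}\gamma_{d-k}(T_F(P)).
\end{align*}

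The remaining step is to recognize the right-hand side as a direct instance of Gr\"unbaum's Theorem~3.3 from \cite{bG68}, which asserts that for every full-dimensional convex polytope $P\subset\R^d$ and every $0\leq m\leq d$,
\begin{align*}
\sum_{j=0}^{d-1}(-1)^j\sum_{F\in\mathcal{F}_j(P)}\gamma_m(T_F(P))=(-1)^m-(-1)^d;
\end{align*}
applying this with $m=d-k\in\{1,\dots,d-1\}$ gives the claimed value $(-1)^{d-k}-(-1)^d$. The main obstacle is really only the bookkeeping: one must verify that the telescoping cleanly eliminates every intermediate term and, before that, confirm that Theorem~\ref{connection} applies to \emph{all} the indices $m=d-k,\dots,d-1$ simultaneously, which is what the absence of the exceptional case $T_F(P)=\R^{d-m+1}$ guarantees. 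Once these two routine checks are in place, the theorem follows at once from Gr\"unbaum's classical formula.
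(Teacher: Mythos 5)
Your proof is correct and follows essentially the same route as the paper: both arguments reduce the claim, via Theorem~\ref{connection}, to the identity $\sum_{j=0}^{d-1}(-1)^j\sum_{F\in\mathcal{F}_j(P)}\gamma_{d-k}(T_F(P))=(-1)^{d-k}-(-1)^d$, which is Gr\"unbaum's Theorem~3.3 rewritten from $\sigma$'s to $\gamma$'s using Euler's relation. The only cosmetic difference is that you expand the alternating sum of $\alpha$'s and telescope forward, whereas the paper inverts the relation recursively to express $\gamma^{k,d}$ as that same alternating sum — the two computations are identical read in opposite directions.
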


\subsection{Expected angles of Gaussian convex cones}
Let $k\in \{1,\dots,d\}$ be fixed.
Consider a random linear operator $A:\R^d\to\R^k$ whose matrix, also denoted by $A$, is given by
\begin{equation*}
A:=\left(
\begin{array}{ccc}
N_{11}&\dots &N_{1d}\\
\vdots &\cdots&\vdots\\
N_{k1}&\dots &N_{kd}\\
\end{array}
\right) \in\R^{ k\times d},
\end{equation*}
where $N_{11},\dots, N_{kd}$ are independent standard Gaussian random variables. \\
For $M \subseteq \R^d$ the set
\[
AM:=\{Ax:x\in M\}\subset \R^k
\]
is called  the \textit{Gaussian image} (or spectrum) of $M$. 
% If $M$ is any subset of $\R^n$, then the set
%\[
%AM:=\{Ax:x\in M\}\subset \R^k
%\]
%is called the \textit{Gaussian image} (or spectrum) of $M$. 
In~\cite[Corollary 3.7.]{DZgotze19} it was found the following connection between the expected angle of the Gaussian image of a cone and its Grassmann angles:
\begin{align}\label{907}
\mathbb{E} [\upsilon_k (AC)] &= \frac{\gamma_k(C) + \gamma_{k-1} (C)} {2}
\end{align}
provided that $C$ is not a linear subspace. Note that since $AC\subset\R^k$, then $\upsilon_k (AC)=\alpha (AC)$ if $AC$ is of full dimension and $\upsilon_k (AC)=0$ otherwise.

With the modified Grassmann angles, 
it possible to obtain~\eqref{907} for arbitrary convex cones:
\begin{align}\label{912}
  \mathbb{E} [\upsilon_k (AC)]  = \alpha_{k-1} (C).
\end{align}
Indeed, due to Theorem~\ref{connection} it suffices to check~\eqref{912} for the linear subspaces only.\\ 
Let $C = \mathbb{R} ^n $, where $ 0\leq n \leq d$.\\
It is known (see \cite[Proposition 5.7]{DZgotze19}) that for any $k \in \N$ and for arbitrary cone $C\subseteq \R^d$,
\[
\mathbb{P}[\dim A C=\min(k,\dim C)]=1.
\]
Consider two cases:
\begin{enumerate}
    \item Case 1: $n \leq k-1.$ \\
Then $\upsilon_k(AC) = 0 $ with probability 1, because $\mathbb{P}[\dim AC  = \min (\dim C , k ) = n] = 1.$\\%\leq k-1 $
So, $\mathbb{E} [\upsilon_k (AC)]=0 $.\\
On the other hand, $\alpha_{k-1} (C) = 0 $, because $\dim C + (d-k+1) = n + (d-k+1) \leq d $.
    \item Case 2: $n \geq k $. \\
By the same argument, \\
$\upsilon_k (AC) = 1 $ with probability 1, because $\mathbb{P}[\dim AC  = \min ( \dim C , k ) = \min ( n , k ) = k ]$,   and $\mathbb{E} [\upsilon_k (AC)]=1 $;
\\
$\alpha_{k-1} (C) = 1 $, because $\dim C + (d-k+1) = n + (d-k+1) > d $.\\
\end{enumerate} 
So,  $$\mathbb{E} [\upsilon_k (A\mathbb{R}^n)] =  \sum_{i\geq 0}  \upsilon_{k+i} (\mathbb{R}^n) = \alpha_{k-1} (\mathbb{R}^n).$$
This completes the proof of~\eqref{912}.

\subsection{Proofs}\label{2247}
\begin{proof}[Proof of Theorem~\ref{connection}]
Note that from the definition of $ W_ {k} ^ {-}, W_ {k} ^ {+} $, as well as from the properties of probability, it follows that:
%\begin{enumerate}
    %\item 
    \begin{align}\label{e1}
        &\mathbb{P} [W_{k}^{+} \cap C \neq  \{0\} ] =  \mathbb{P} [W_{k}^{-} \cap C \neq  \{0\} ].
        \\
    %\end{align}  
    %\item 
    %\begin{multline}
    \nonumber
    &\mathbb{P} [W_{k}^{+} \cap C \neq  \{0\} ] +  \mathbb{P} [W_{k}^{-} \cap C \neq  \{0\} ]  \\ 
    \nonumber
    &=\mathbb{P} \left[ (W_{k}^{+}\cap C\neq  \{0\}) \bigcup (W_{k}^{-}  \cap C\neq  \{0\} )\right] + \mathbb{P} \left[ (W_{k}^{+}\cap C\neq  \{0\}) \bigcap (W_{k}^{-}  \cap C\neq  \{0\} )\right]
    \\
    \nonumber
    &=\mathbb{P} [ (W_{k}^{+}\cup W_{k}^{-} ) \cap C \neq  \{0\} ] + \mathbb{P} \left[ (W_{k}^{+}\cap C\neq  \{0\}) \bigcap (W_{k}^{-}  \cap C\neq  \{0\} )\right] 
    \\
   \label{e2}
    &= \mathbb{P} [ W_{k} \cap C \neq  \{0\} ] + \mathbb{P} \left[ (W_{k}^{+}\cap C\neq  \{0\}) \bigcap (W_{k}^{-}  \cap C\neq  \{0\} )\right]. 
\end{align}
%\end{enumerate}
%\begin {align*}
%\mathbb{P} [W_{m}^{+} \cap C \neq  \{0\} ] +  \mathbb{P} [W_{m}^{-} \cap C \neq  \{0\} ] = \\ \mathbb{P} \left[ (W_{m}^{+}\cap C\neq  \{0\}) \bigcup (W_{m}^{-}  \cap C\neq  \{0\} )\right] + \mathbb{P} \left[ (W_{m}^{+}\cap C\neq  \{0\}) \bigcap (W_{m}^{-}  \cap C\neq  \{0\} )\right]=\\ \mathbb{P} [ (W_{m}^{+}\cup W_{m}^{-} ) \cap C \neq  \{0\} ] + \mathbb{P} \left[ (W_{m}^{+}\cap C\neq  \{0\}) \bigcap (W_{m}^{-}  \cap C\neq  \{0\} )\right] =\\ \mathbb{P} [ W_{m} \cap C \neq  \{0\} ] + \mathbb{P} \left[ (W_{m}^{+}\cap C\neq  \{0\}) \bigcap (W_{m}^{-}  \cap C\neq  \{0\} )\right] 
%\end{align*}
Taking into account Lemma~\ref{633}, we obtain
\begin{align*}  
\mathbb{P} \left[ (W_{k}^{+}\cap C\neq  \{0\}) \bigcap (W_{k}^{-}  \cap C\neq  \{0\} )\right] =  \mathbb{P} \left[W_{k-1} \cap C \neq  \{0\} \right].
\end{align*}
Combining (\ref{e1}) and (\ref{e2}), we get the required.

\end{proof}

\begin{proof}[Proof of Lemma~\ref{633}]
Let us prove (\ref{connection^}) by proving the two following inequalities:
\begin{align}
    \label{geq}\mathbb {P} \left [(W_ {k} ^ {+} \cap C \neq \{0 \}) \bigcap (W_ {k} ^ {-} \cap C \neq \{0 \}) \right] \geq \mathbb {P} \left [W_ {k-1} \cap C \neq \{0 \} \right],\\
    \label{leq}\mathbb {P} \left [(W_ {k} ^ {+} \cap C \neq \{0 \}) \bigcap (W_ {k} ^ {-} \cap C \neq \{0 \}) \right] \leq \mathbb {P} \left [W_ {k-1} \cap C \neq \{0 \} \right].
\end{align}
Let us start with \eqref{geq}. From the definition of half-subspaces it is clear that $W_{k}^{+}\cap W_{k}^{-}= W_{k-1}$, which means that if the cone intersects $W_{k-1}$, then both $W_{k}^{+}$  and  $W_{k}^{-}$ are intersect  with the cone nontrivially. Thus, $$  \mathbb{P} \left[ (W_{k}^{+}\cap C\neq  \{0\}) \bigcap (W_{k}^{-}  \cap C\neq  \{0\} )\right] \geq  \mathbb{P} \left[W_{k-1} \cap C \neq  \{0\} \right]. $$ 
To show \eqref{leq} let us assume that the intersection of the cone with both half-subspaces is nontrivial. Then there are points $ x \neq 0, y \neq 0 $ such that $ x \in W_ {k} ^ {+} \cap C, y \in W_ {k} ^ {-} \cap C. $ \ \
If at least one of the points $ x \text { or } y $ lies in $ W_ {k-1} $, then the cone intersects $ W_ {k-1} $ in a nontrivial way, hence the inequality holds. \\
Therefore, we can assume that in $ W_ {k} $ the points $ x \text { and } y $ are separated by the subspace $ W_ {k-1} $. \\
By the convexity condition, the entire segment $xy$ lies inside the cone. \\
Consider two cases:
\begin{enumerate}
    \item If $x$ and $y$ are not collinear, then $ xy $ intersects $ W_ {k-1} $ at a point different from 0.  Hence, in this case, the intersection of the cone with $ W_ {k-1 } $ is nontrivial. 
    \item If $ x, y, 0 $ are on some line $ l $, then this whole line lies inside $C$ and does not lie inside $ W_ {k-1} $. 
 
In the case when $ C \cap W_ {k} = l \cup A $, where $ A \neq \emptyset $, consider a point $ a \in A $. If $ a \in W_ {k-1} $, then the intersection of $ C $ with $ W_ {k-1} $ is nontrivial, which is what we need. If $ a \notin W_ {k-1} $, then $ a $ lies in one of the open half-spaces $ W_ {k} ^ {+}, W_ {k} ^ {-} $.
Let us connect $ a $ with one of the two points $ x \text { or } y $, which lies in the other half-subspace. Without loss of generality, we can assume that this is the point $ x $. Then the segment $ ax $ lies in the cone $ C $ and intersects $ W_ {k-1} $ at a point different from $0$. 

Thus, it remains to consider the case when $ C \cap W_ {k} = l $. \\
We represent $ C $ as $ C = L \cup \Tilde {C}, $ where $ L $ is the largest linear subspace contained in the cone $ C $. 
\begin{enumerate}
    \item Let $ \dim L \leq d - k $. In this case, $ \mathbb {P} [C \cap W_ {k} = l] = 0 $, since $ C \cap W_ {k} = l $  implies that $ L \cap W_ {k} = l $, but $ \mathbb {P} [L \cap W_ {k} = l] = 0 $. So the left and right hand sides of (\ref{connection^}) are equal to $0$.
    \item Let $ \dim L \geq d-k + 2 $, then $ C \cap W_ {k} $ contains a two-dimensional plane with probability 1, and this contradicts our assumption.
    \item Finally, we need to consider the last option, when $ \dim L = d-k + 1 $. \\
Under the conditions of the lemma, $C \neq \mathbb{R}^{d-k+1} $.
Hence, there is $z\in  \Tilde{C} = C \setminus L$.
Consider the linear hull of $L$ and $z$, denote it by $ \Tilde{L} %:= Lin(L,z)
$. So, $\dim \Tilde{L} = d-k+2.$ \\
It follows that $\Tilde{L} \cap W_{k-1} $ contains some line $ \Tilde {l} $ with probability $1$.
The line $\Tilde{l}$ can be represented as $\Tilde{l} =\lambda w$ for some vector $w\in \Tilde{L}$ and $ \lambda \in \mathbb{R}$.
Since $w= \lambda_{1}z+ \lambda_{2}v$ for some $\lambda_{1},\lambda_{2} \in \mathbb{R}, v\in L$, we have  $\Tilde{l} =\lambda w=\lambda (\lambda_{1}z+ \lambda_{2}v)$, where $v\in L, \lambda \in \mathbb{R}$.\\
If $\lambda_{1}=0$, then $\Tilde{l} =\lambda\lambda_{2}v = \Tilde{\lambda} v$, that is, $\Tilde{l}$ lies in $L$, which means that the intersection of $ C $ with $ W_ {k-1} $ is nontrivial.
If $\lambda_{1}\neq0$, then $\Tilde{l} =\lambda (\lambda_{1}z+ \lambda_{2}v) = \lambda\lambda_{1} (z+ \frac{\lambda_{2}}{\lambda_{1}}v) = \Tilde{\lambda} ( z + \Tilde{v})$, and for nonnegative $ \Tilde {\lambda} $, points of the form $ \Tilde {\lambda } (z + \Tilde {v}) $ lie in the cone. It follows that the intersection of $ C $ with $ W_ {k-1} $ is nontrivial. 
\end{enumerate}
    % \\
 %\\
\end{enumerate} 

% с единичной вероятностью $W_{m-1} \cap \Tilde{L} \neq  \{0\} $.
\end{proof}

\begin{proof}[Proof of Theorem~\ref{Crofton}]
Let us consider three cases: 
\begin{enumerate}
    \item If $ C $ is not a linear subspace in $ \mathbb {R} ^ d $, then using the Theorem \ref{connection}  and Crofton formula \eqref{1818}, we obtain the following chain of equalities:
\begin{align*} 
\mathbb{P} [W_{d-k}^{+} \cap C \neq  \{0\} ] =  \frac{1}{2}\left( \mathbb{P} [W_{d-k} \cap C \neq  \{0\} ] + \mathbb{P} [W_{d-k-1} \cap C \neq  \{0\} ] \right)
\\= \frac{1}{2} \left(2 \sum_{i\geq 1  \text{ odd}} \upsilon_{k+i} (C) + 2 \sum_{j\geq 1  \text{ odd}} \upsilon_{k+1+j} (C) \right) = \sum_{i\geq 1}  \upsilon_{k+i} (C).
\end{align*}
\item If $C$ is a linear subspace of dimension $ n $ in $ \mathbb {R} ^ d $, but at the same time $ C \neq \mathbb {R} ^ {k + 1} $, then:
\begin{enumerate}
    \item Suppose $k < n$; then $n+d-k > d$, moreover, since $n \neq k+1 $, we see that  $n\geq k+2$, therefore, by Theorem \ref{connection},
\begin{gather*}
\mathbb{P} [W_{d-k}^{+} \cap C \neq \{0\} ]
\\
= \frac{1}{2} \left( \mathbb{P} [W_{d-k} \cap C \neq \{0\} ]
+ \mathbb{P} [W_{d-k-1} \cap C \neq \{0\} ] \right)
\\
= \frac{1}{2} ( 1+1 ) = 1.
%\mathbb{P} [W_{d-k}^{+} \cap C \neq  \{0\} ] &=  \frac{1}{2} \left( \mathbb{P} [W_{d-k} \cap C \neq  \{0\} ] 
%+ \mathbb{P} [W_{d-k-1} \cap C \neq  \{0\} ] \right)
%\\
%&= \frac{1}{2} ( 1+1 ) = 1.
\end{gather*}
 On the other hand, $\slim_{i\geq 1}  \upsilon_{k+i} (C) = 1$, since $ \upsilon_{n}(C) = 1 , 
     \upsilon_{j}(C) = 0 \text{ for } j\neq n$.
     \item 
     Suppose $k \geq n$; then $n+d-k \leq d$, hence,
\begin{gather*} 
\mathbb{P} [W_{d-k}^{+} \cap C \neq  \{0\} ] 
\\
=  \frac{1}{2} \left( \mathbb{P} [W_{d-k} \cap C \neq  \{0\} ] + \mathbb{P} [W_{d-k-1} \cap C \neq  \{0\} ] \right)
\\
= \frac{1}{2} ( 0+0 ) = 0.
\end{gather*}
At the same time, $\slim_{i\geq 1}  \upsilon_{k+i} (C) = 0$, since $ \upsilon_{n}(C) = 1 , 
     \upsilon_{j}(C) = 0 \text{ for } j\neq n$.
\end{enumerate}
%1) 
%2)  
\item Finally, consider the case when $ C = \mathbb{R}^{k+1} $. %, that is, $k=m+1$.     
In this case, on the one hand, $d-k+k+1=d+1 > d$ and therefore,
$$ \mathbb{P} [W_{d-k}^{+} \cap C \neq  \{0\} ] = 1. $$
On the other hand, $\slim_{i\geq 1}  \upsilon_{k+i} (C) = 1$, because $ \upsilon_{k+1}(C) = 1 , 
     \upsilon_{j}(C) = 0 \text{ for } j\neq k+1.$
\end{enumerate}
     \end{proof}

\begin{proof}[Proof of Theorem~\ref{827}]
For the proof, it is convenient to denote by $\alpha_{k,F,P}$ the $k$-th modified Grassmann  angle for a tangent cone of $P$ at face $F$, i.e., $\alpha_{k,F,P}:= \alpha_{k}(T_{F}(P))$.  %-- $j$- dimensional face of  $P$. % for tangent cone to $P$,

According to Gr\"unbaum \cite{bG68} (with the slightly different notation),  we introduce: %GRASSMANN ANGLES OF CONVEX POLYTOPES BY BRANKO GRINBAUM.
\begin{align*}
  &\gamma^{k,d} (C^{r}) := \mathbb{P} [W_{k} \cap C^{r} \neq  \{0\} ],
\\
&\sigma^{k,d} (C^{r}) : = 1- \gamma^{k,d} (C^{r}) = \mathbb{P} [W_{k} \cap C^{r} =  \{0\} ],  
\end{align*}
%\gamma^{m,d} (C^{k}) : = \mathbb{P} [W_{m} \cap C^{k} =  \{0\} ] 
%\\
%\beta^{m,d} (C^{k}) :=1- \gamma^{m,d} (C^{k}) = \mathbb{P} [W_{m} \cap C^{k} \neq  \{0\} ],
where $ C ^ {r} \subseteq \R^d $ is a convex cone of dimension $ 1\leq r \leq d $  and $W_k$ is a random $k$-dimensional linear subspace having the uniform distribution on
the Grassmann manifold of all such subspaces in $\R^d$.%is a random $m$-dimensional linear subspace of $\R^d$
%distributed uniformly on the Grassmannian of all such subspaces,.
%For a $ d $ -dimensional lattice polytope $ P $ and its $ j $ -dimensional face $ F ^ {j} $, 

For a polytope $ P $ and its $ j $-dimensional face $ F ^ {j}$ let
\begin{align*}
    \sigma ^ {k, d} (P, F ^ {j}): = \sigma ^ {k, d} (T_{F^{j}}(P)), 
    \\
    \gamma ^ {k, d} (P, F ^ {j}): = \gamma ^ {k, d} (T_{F^{j}}(P)), 
\end{align*} 
where $ T_{F^{j}}(P) $  was defined in section \ref{polytopes}.
%is the tangent cone to $ K ^ {d} $ at some point $ z \in \relint (K ^ {j}) $ (as we know, it doesn't matter which point of the relative interior of the face to choose)

Also, define
$$\sigma_{j}^{k} (P) := \sum_{F^j \in \mathcal{F}_{j}(P)}\sigma^{k,d}(P,F^{j}). $$
%here the sum is taken over all $ j $ -faces $ F_ {i} ^ {j} $ of the polytope $ P $.

%According to Theorem 3.3, 
In \cite[Theorem 3.3.]{bG68}, Gr\"unbaum proved that for each $ d $-polytope $P$ and $1 \leq k\leq d-1$ the following identity holds:
\begin{align} \label{Gr}
    \sum_{j=0}^{d-k-1} (-1)^{j} \sigma_{j}^{k} (P) = 1 - (-1)^{d-k}.
 \end{align}

We rewrite the last identity in terms of $ \gamma ^ {k, d} (P,F^{j}) = 1- \sigma ^ {k, d} (P,F^{j}) $. First note that the superscript $ d-k-1 $ in the sum on the left hand side can be increased to $ d $, while the value of the sum will not change, since $ \sigma ^ {k, d} (P, F_ {i} ^ {j}) = 0 $ for $ j \geq d-k $.
We get:
\begin{align*}
    \sum_{j=0}^{d-k-1} (-1)^{j} \sigma_{j}^{k} (P) &= \sum_{j=0}^{d} (-1)^{j} \sum_{F^j \in \mathcal{F}_{j}(P)}\sigma^{k,d}(P,F^{j}) 
    \\ 
    &= \sum_{j=0}^{d} (-1)^{j} \sum_{F^j \in \mathcal{F}_{j}(P)} (1-\gamma^{k,d}(P,F^{j}))
    \\
    &=\sum_{j=0}^{d} (-1)^{j} f_{j} - \sum_{j=0}^{d} (-1)^{j} \sum_{F^j \in \mathcal{F}_{j}(P)} \gamma^{k,d}(P,F^{j}) .
 \end{align*}
 
Using Euler's identity (\ref{Eulerp}) for a convex $ d $-dimensional polytope we obtain:
%$$\sum_{j=0}^{d} (-1)^{j} f_{j} = 1.$$
%Then 
$$ \sum_{j=0}^{d} (-1)^{j} f_{j} - \sum_{j=0}^{d} (-1)^{j} \sum_{F^j \in \mathcal{F}_{j}(P)} \gamma^{k,d}(P,F^{j}) = 1 - \sum_{j=0}^{d} (-1)^{j} \sum_{F^j \in \mathcal{F}_{j}(P)} \gamma^{k,d}(P,F^{j}). $$
Then, comparing the last equality with equality (\ref{Gr}), we can conclude that \begin{align}\label{gammagr}
 \sum_{j=0}^{d} (-1)^{j} \sum_{F^j \in \mathcal{F}_{j}(P)} \gamma ^{k,d}(P,F^{j}) = (-1)^{d-k}.
 \end{align} 
%Further, we can express $ \alpha_{m} $ in terms of $ \beta_ {m} $,
%Formula \eqref{gammagr} also holds for $m=d$, this is exactly the Euler's relation.
Further, using the definitions of $\alpha_{k, F^{j} , P };  \gamma^{k,d}(P,F^{j})$ and Theorem \ref{connection}, we obtain %that
for $F^{j} \neq P  $ and $ 1\leq k \leq d-1 $: 
\begin{align}\label{connection1}
 \alpha_{k, F^{j} , P }  = \frac{1}{2} \left(\gamma^{d-k,d}(P,F^{j}) + \gamma^{d-k-1,d}(P,F^{j}) \right).
\end{align}
Substituting $d-1$ for $k$ in (\ref{connection1}), we get %$m=d-1$ we have 
\begin{align*}
 \alpha_{d-1, F^{j} , P}  = \frac{1}{2} \left(\gamma^{1,d}(P ,F^{j}) + 0 \right). \end{align*}
Hence,
\begin{align*}
   \gamma^{1,d}(P ,F^{j}) = 2  \alpha_{d-1, F^{j} , P }.
\end{align*}
Repeating the argument above, we get
\begin{gather*}
\alpha_{d-2, F^{j} , P }  = \frac{1}{2} \left(\gamma^{2,d}(P, F^{j}) + \gamma^{1,d}(P, F^{j}) \right) 
\\
\gamma^{2,d}(P, F^{j}) = 2  \alpha_{d-2, F^{j} , P  } - 2\alpha_{d-1, F^{j} , P }
\\
\alpha_{d-3, F^{j} , P  }  = \frac{1}{2} \left(\gamma^{3,d}(P, F^{j}) + \gamma^{2,d}(P, F^{j}) \right)
\\
\gamma^{3,d}(P, F^{j}) = 2  \alpha_{d-3,  F^{j} , P  } - 2\alpha_{d-2,  F^{j} , P  } + 2\alpha_{d-1,  F^{j} , P  }
\\
\cdots
\end{gather*}
 Continuing this line of reasoning, we see that
 \begin{align*}
     \gamma^{k,d}(P, F^{j}) = 2\sum_{n=0}^{k-1} (-1)^{n}\alpha_{d-k+n, F^{j} , P}.%(P,K_{i}^{j})  
 \end{align*}
Finally, substitute the last identity in (\ref{gammagr}) (note that  for $F^{j} =P $ we have $\gamma^{k,d}(P ,P) = 1 $)  to obtain
\begin{align*}
2\sum_{j=0}^{d-1} (-1)^{j} \sum_{F^j \in \mathcal{F}_{j}(P)}
\sum_{n=0}^{k-1} (-1)^{n}\alpha_{d-k+n, F^{j} , P}%(P,K_{i}^{j})   
= (-1)^{d-k} -  (-1)^{d}.
\end{align*}
\end{proof}

\section{Proofs of main results}\label{1300}
\subsection{Theorem~\ref{2000}: Properties of discrete intrinsic volumes }\label{809}  %\ref{1959},
%Now we consider the following concept.
%Consider the sum
%\begin{thm}
%\begin{thm}
%Let $P \in \mathcal{P} (\Z^d)$ and 
\begin{proof}
Since $\alpha(N_F(P)) = \upsilon_k(T_F(P))$ we can rewrite $A_{k}(P)$ in the following form: 
\begin{align}\label{valuation formula}
A_{k}(P) := \slim_{v \in P\cap \Z^d} \slim_{F \in \mathcal{F}_{k}(P)} \mathbbm {1}[v\in F]\det(F)\upsilon_{k}(T_{F}(P)) \alpha(T_{v}(F)).
\end{align}
%Then $A_{m,P}$ is valuation.%and 
%\begin{align}
   % \lim_{t\to\infty}\frac{A_{m,P}(t)}{t^m}=V_m(P), \quad m=0,1,\dots,d.
%\end{align}
%\end{thm}
%is valuation 
%\end{thm}
%\begin{align}\label{valuation formula}
%A_{m,P} := \slim_{v \in P\cap \Z^d} \slim_{F \in \mathcal{F}_{m}(P)} \mathbbm {1}[v\in F]\det(F)\upsilon_{m}(T_{F}(P)) \alpha(T_{v}(F)).
%\end{align}
%Let us prove that $g_{m,P}$ is valuation.
Obviously, $A_{k}(\emptyset) = 0.$
First, we must show that for all $P, Q\in\mathcal P(\Z^d)$ such that $P\cup Q \in\mathcal P(\Z^d) $
\begin{align*}
   A_{k}(P\cup Q) = A_{k}(P)+ A_{k}(Q)- A_{k}(P\cap Q),
\end{align*}
that is, 
\begin{gather*}
   \slim_{v \in (P\cup Q) \cap \Z^d}\slim_{F \in \mathcal{F}_{k}(P\cup Q)} \mathbbm {1}[v\in F]\det(F)\upsilon_{k}(T_{F}(P\cup Q)) \alpha(T_{v}(F))) 
   \\
   =  \slim_{v \in P\cap \Z^d} \slim_{F \in \mathcal{F}_{k}(P)} \mathbbm {1}[v\in F]\det(F)\upsilon_{k}(T_{F}(P)) \alpha(T_{v}(F)) 
   \\
   +  \slim_{v \in Q\cap \Z^d} \slim_{F \in \mathcal{F}_{k}(Q)} \mathbbm {1}[v\in F]\det(F)\upsilon_{k}(T_{F}(Q)) \alpha(T_{v}(F)) 
   \\
   - \slim_{v \in (P\cap Q)\cap \Z^d} \slim_{F \in \mathcal{F}_{k}(P\cap Q)} \mathbbm {1}[v\in F]\det(F)\upsilon_{k}(T_{F}(P\cap Q)) \alpha(T_{v}(F)).
\end{gather*}
We will prove that the equality holds for every $v \in (P\cup Q) \cap \Z^d$. In other words for a fixed $v\in P\cup Q$ we have
\begin{gather}\label{for a fixed v}
      \slim_{F \in \mathcal{F}_{k}(P\cup Q)} \mathbbm {1}[v\in F]\det(F)\upsilon_{k}(T_{F}(P\cup Q)) \alpha(T_{v}(F))
   \\ \nonumber
   =    \slim_{F \in \mathcal{F}_{k}(P)} \mathbbm {1}[v\in F]\det(F)\upsilon_{k}(T_{F}(P)) \alpha(T_{v}(F))
   \\ \nonumber
   +    \slim_{F \in \mathcal{F}_{k}(Q)} \mathbbm {1}[v\in F]\det(F)\upsilon_{k}(T_{F}( Q)) \alpha(T_{v}(F))
   \\ \nonumber
   -    \slim_{F \in \mathcal{F}_{k}(P\cap Q)} \mathbbm {1}[v\in F]\det(F)\upsilon_{k}(T_{F}(P\cap Q)) \alpha(T_{v}(F)).
\end{gather}
\begin{defn}
We say that two $k$-dimensional polytopes are collinear if they are in the same $k$-dimensional subspace.%hyperplane.
\end{defn}
\noindent Consider some cases.

\textbf{Case 1.} $v\notin P\cap Q$. Without loss of generality, $v\in P\setminus Q$. Then \eqref{for a fixed v} becomes
\begin{gather*}
     \slim_{F \in \mathcal{F}_{k}(P\cup Q)} \mathbbm {1}[v\in F]\det(F)\upsilon_{k}(T_{F}(P\cup Q)) \alpha(T_{v}(F))
   \\
   =   \slim_{F \in \mathcal{F}_{k}(P)} \mathbbm {1}[v\in F]\det(F)\upsilon_{k}(T_{F}(P)) \alpha(T_{v}(F)),
\end{gather*}
which is trivial, since $\upsilon_k$ and $\alpha$ are both defined by a small neighborhood of $v$, and polytopes $P\cup Q$, $P$ are identical in the neighborhood of $v$.

%\medskip
%\noindent
\textbf{Case 2.} $v\in P\cap Q$. To deal with this case we need to describe the connections between faces of $P, Q$ and faces of $P\cup Q, P\cap Q$. We will need two following lemmas.
\begin{lm}\label{unionf}
Let $F$ be a $k$-dimensional face of $P\cup Q$ or $P\cap Q$. Then there exists a $k$-dimensional face $\Tilde{F}$ of $P$ or of $Q$ such that  $F$ and $\Tilde{F}$ are collinear.
\end{lm}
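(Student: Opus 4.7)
The plan is to handle the two cases separately: when $F$ is a face of $P\cup Q$, and when $F$ is a face of $P\cap Q$. The former reduces to a clean dimension argument, while the latter requires exploiting the convexity of $P\cup Q$ to constrain the local structure of $P\cap Q$ near a relative interior point of $F$.

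Case 1 ($F$ is a $k$-face of $P\cup Q$): First I would write $F = (P\cup Q)\cap H$ for a supporting hyperplane $H$ of $P\cup Q$. Since $P,Q\subseteq P\cup Q$ both lie on the same side of $H$, the hyperplane $H$ is weakly supporting for $P$ and for $Q$, so $F_P:=P\cap H$ and $F_Q:=Q\cap H$ are (possibly empty) faces of $P$ and $Q$, and $F = F_P \cup F_Q$. If both $\dim F_P$ and $\dim F_Q$ were strictly less than $k$, then $F$ would be contained in the union of two proper affine subspaces of $\aff(F)$, contradicting that $\relint(F)$ is non-empty and open in $\aff(F)$. Hence at least one of $F_P,F_Q$ is $k$-dimensional; this face is contained in $F$ with equal dimension, so it shares the same affine hull and serves as $\tilde F$.

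Case 2 ($F$ is a $k$-face of $P\cap Q$): Here I would pick $x\in\relint(F)$ and let $F_P$, $F_Q$ be the unique faces of $P$, $Q$ whose relative interiors contain $x$. Standard facts give $F = F_P\cap F_Q$, hence $\dim F_P,\dim F_Q \geq k$. The key claim is that $\dim F_P = k$ or $\dim F_Q = k$; such a face, containing $F$ with matching dimension, shares its affine hull and yields $\tilde F$. If $x\in\Int(P)$, then $P\cap Q$ agrees with $Q$ on a neighbourhood of $x$, forcing $\dim F_Q = k$, and symmetrically for $x\in\Int(Q)$. When $x$ lies on the boundary of both polytopes, I would pass to tangent cones: $T_x(P\cup Q)=T_x(P)\cup T_x(Q)$ is a convex cone by the convexity of $P\cup Q$, and a short cone-theoretic argument shows that the maximal linear subspaces of $T_x(P)$ and $T_x(Q)$, whose intersection has dimension exactly $k$, cannot both have dimension strictly greater than $k$ without contradicting the convexity of their union.

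The main obstacle is this last cone-theoretic step, which I would isolate as a sublemma: if $C_1,C_2\subseteq\R^d$ are convex cones whose union $C_1\cup C_2$ is convex, and if the largest linear subspace contained in $C_1\cap C_2$ has dimension exactly $k$, then $C_i$ itself contains a $k$-dimensional maximal linear subspace for at least one $i\in\{1,2\}$. Once this sublemma is in place, the translation back to faces via the correspondence between faces of a polytope and lineality spaces of its tangent cones is routine, and the proof of Lemma~\ref{unionf} is complete.
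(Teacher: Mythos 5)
Your Case 1 is essentially the paper's own argument: both write $F=H\cap(P\cup Q)$ for a supporting hyperplane $H$, observe that $H$ also supports $P$ and $Q$ so that $F=(H\cap P)\cup(H\cap Q)$ is a union of two faces, and conclude by a dimension count that at least one of them is $k$-dimensional. Your Case 2, however, takes a genuinely different route. The paper stays with supporting hyperplanes: it first shows, using segments joining points of $P$ and $Q$ and the convexity of $P\cup Q$, that at least one polytope, say $P$, lies on one side of $H_F$, so $H_F\cap P$ is a face of $P$, and then excludes $\dim(H_F\cap P)\geq k+1$ by a three-way case analysis on the position of $Q$ relative to $H_F$. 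You instead localize at $x\in\relint(F)$, use $F=F_P\cap F_Q$ together with the identification of $\dim F_P$ with the dimension of the lineality space of $T_x(P)$, and reduce everything to one cone statement. This is more structural and meshes well with the tangent-cone identities $T_x(P\cup Q)=T_x(P)\cup T_x(Q)$ and $T_x(P\cap Q)=T_x(P)\cap T_x(Q)$ that the paper itself invokes elsewhere; the paper's version buys elementarity at the price of a longer case analysis.

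The one real debt in your plan is the cone sublemma, which you assert but do not prove; since it carries the whole of Case 2, you must supply it. It is true, and the argument is short. Write $L_i$ for the lineality space $C_i\cap(-C_i)$, note $L_1\cap L_2$ is the lineality space of $C_1\cap C_2$ and has dimension $k$, and suppose for contradiction $\dim L_1,\dim L_2>k$. Pick $u\in L_1\setminus L_2$ and $v\in L_2\setminus L_1$, and set $w_1=(u+v)/2$, $w_2=(u-v)/2$; by convexity of $C_1\cup C_2$ each of $\pm w_1,\pm w_2$ lies in $C_1$ or $C_2$. If $w_1$ and $-w_1$ lie in the same $C_i$ then $w_1\in L_i$, forcing $v\in L_1$ or $u\in L_2$, a contradiction; so, up to symmetry, $w_1\in C_1$ and $-w_1\in C_2$, and likewise $w_2,-w_2$ lie in different cones. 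If $w_2\in C_1$, then $-u\in L_1\subseteq C_1$ and $w_1\in C_1$ give $-w_2=-u+w_1\in C_1$ (convex cones are closed under addition), so $w_2\in L_1$ and $v\in L_1$, a contradiction; if instead $-w_2\in C_1$, then $v\in L_2\subseteq C_2$ and $-w_1\in C_2$ give $-w_2=v-w_1\in C_2$, so $w_2\in L_2$ and $u\in L_2$, again a contradiction. With this supplied (and the routine check that $F=F_P\cap F_Q$ for polyhedra), your proof closes and is, if anything, tidier than the paper's.
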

\begin{proof}[Proof of Lemma \ref{unionf}]
Let us consider two cases: 
\begin{enumerate}
    \item $F$ is a $k$-dimensional  face of $P \cup Q$. By definition (see Subsection \ref{Polyhedral sets}), the face $F$ can be represented in the form $F = H_F\cap (P\cup Q)$, where $H_F$ is a supporting hyperplane of $P\cup Q$. In this case, $ H_F\cap P$ and $ H_F\cap Q$ are faces of $P$ and $Q$ respectively. Now we show that both of these faces are $k$-dimensional. Indeed, if $ H_F\cap P$ and $ H_F\cap Q$ have dimension less than $k$, then $F = H_F\cap (P\cup Q) = (H_F\cap P)\cup (H_F\cap Q)$ has dimension less than $k$, which contradicts our assumption.
    
    \item $F$ is a $k$-dimensional  face of $P \cap Q$. By the same argument, $F = H_F\cap (P\cap Q)$ for some  supporting hyperplane $H_F$ of $P\cap Q$.
    
    First note that at least one of the polytopes $P$ or $Q$ lies on one side of the $H_F$.
    In fact, by convexity of the union of $P$ and $Q$, if $p \in P$, $q \in Q$,  then the segment $pq$ contains at least one point from $P\cap Q$. So, if we assume that there are points $p_1$ and $q_1$ on one side of the $H_F$, $p_2$ and $q_2$ on the other side of the $H_F$, then, according to the above, there exists $x \in p_1q_1, y \in p_2q_2$, such that  $x, y \in P \cap Q$. Consequently,  $P \cap Q$ does not lie on one side of $H_F$, which contradicts the assumption.
    
    We can assume without loss of generality that $P$ lies on one side of the $H_F$.  Then $H_F \cap P$ is a face of $P$.  It remains to check that $H_F \cap P$ is $k$-dimensional.\\
    Assume the converse: the dimension of $H_F \cap P$ at least $k+1$. There are three cases. 
    \begin{enumerate}
        \item \label{(a)} $Q$ does not lie on one side of $H_F$. Since $\dim H_F \cap P \geq k+1$, there is a point $w \notin F$, $w \in  H_F \cap P$, such that $\dim \lin(w, F) = k+1,$ where $\lin(w, F)$ is a linear hull of $w$ and $F$. Let us prove that in this case $w \in Q$. By assumption, there are two points $q_1, q_2 \in Q $, such that $ q_1, q_2  $ lie on opposite sides of $H_F$. Both segments $q_1w$ and $q_2 w$ contain points from $P\cap Q$. But $P\cap Q$ lies on one side of $H_F$. Therefore, the only point on the segments $q_1w,q_2 w$ lying at the $P\cap Q$ is $w$. This
contradicts our assumption that $F = H_F\cap (P\cap Q)$ is $k$-dimensional.
        \item \label{(b)} $Q$ and $P$ lie on opposite sides of $H_F$. This case is analyzed similarly to the previous one.
        \item $Q$ lies on the same side of $H_F$ as $P$. Then $H_F\cap Q$ is a face of $Q$. If $H_F\cap Q$ is $k$-dimensional, then the lemma is proved with $\Tilde{F} = H_F\cap Q$.
        Now suppose that $\dim H_F\cap Q \geq k+1 $. There are a points $w_1 \notin F$, $w_1 \in  H_F \cap P$, $w_2 \notin F$, $w_2 \in  H_F \cap Q$, such that $\dim \lin(w_1, F) =\dim \lin(w_2, F) = k+1$. Suppose that $\dim \lin(w_1, w_2, F) = k+2$. As above, the segment $w_1w_2$ contains a point $z \in P\cap Q$. Moreover, $z \notin F$, since $\dim \lin(w_1, w_2, F) = k+2$. On the other hand, $z \in H_F\cap (P\cap Q) = F $, which is the desired contradiction. If $\dim \lin(w_1, w_2, F) = k+1$, then $w_1$ and $ w_2$ are collinear. In
the case that $w_1$ and $ w_2$ co-directed, we have $\dim H_F \cap (P \cap Q) = k+1$, which contradicts our assumption.
If $w_1$ and $ w_2$ are oppositely
directed, then $F$ is a face of $H_F \cap P$, and hence $F$ is a face of $ P$.
This concludes the proof.
    \end{enumerate}

\end{enumerate}

\end{proof}
\begin{lm} \label{unionff}
Let $E$ be a $k$-dimensional face of $P$, such that there does not exist face of $Q$ collinear to $E$. Then $E$ is a $k$-dimensional face of $P\cap Q$ or of $P\cup Q$.
\end{lm}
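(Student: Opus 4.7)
My plan is to fix a supporting hyperplane $H_E$ of $P$ with $H_E\cap P=E$ and $P\subset H_E^+$, and to split into two cases according to whether $E\subset Q$. If $E\subset Q$, the statement is immediate: then $P\cap Q\subset P\subset H_E^+$, so $H_E$ also supports $P\cap Q$, and $H_E\cap(P\cap Q)=(H_E\cap P)\cap Q=E\cap Q=E$, exhibiting $E$ as a $k$-face of $P\cap Q$.

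Suppose now $E\not\subset Q$ and fix any point $y_0\in E\setminus Q$. The first step is to show $Q\subset H_E^+$. Assume for contradiction that some $q\in Q$ lies in $\Int H_E^-$. By convexity of $P\cup Q$, the whole segment $[y_0,q]$ lies in $P\cup Q$. For every $t\in(0,1]$, the point $(1-t)y_0+tq$ lies in $\Int H_E^-$, hence outside $P\subset H_E^+$, so it must lie in $Q$. Letting $t\to 0^+$ and using that $Q$ is closed gives $y_0\in Q$, a contradiction. Hence $Q\subset H_E^+$, so $H_E$ also supports $P\cup Q$, and $H_E\cap(P\cup Q)=E\cup F_Q$, where $F_Q:=H_E\cap Q$ is a (possibly empty) face of $Q$.

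The remaining step, which I expect to be the main obstacle, is to show $F_Q\subset E$. Since $E\cup F_Q$ is the intersection of the convex sets $P\cup Q$ and $H_E$, it is convex. I would then invoke the following elementary dichotomy, verified by inspecting relative interiors in affine hulls: if $A$, $B$ are closed convex sets whose union is convex, then $\dim A>\dim B$ implies $B\subset A$, while $\dim A=\dim B$ implies $\aff A=\aff B$. Applying this with $A=E$ and $B=F_Q$ leaves three cases. If $\dim F_Q>k$, then $E\subset F_Q\subset Q$, contradicting $E\not\subset Q$. If $\dim F_Q=k$, then $\aff F_Q=\aff E$, making $F_Q$ a $k$-face of $Q$ collinear to $E$, contradicting the hypothesis. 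Finally, if $\dim F_Q<k$ (including $F_Q=\emptyset$), then $F_Q\subset E$. In all surviving cases $H_E\cap(P\cup Q)=E$, so $E$ is a $k$-face of $P\cup Q$, completing the proof.
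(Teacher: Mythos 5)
Your proof is correct and follows essentially the same route as the paper's: fix the supporting hyperplane $H_E$, exploit the convexity of $P\cup Q$ through a segment argument, and compare dimensions of convex sets whose union is convex (your dichotomy is the convex-set analogue of the paper's Lemma~\ref{convexity of union cones}). Your reorganization of the cases --- first disposing of $E\subset Q$ outright, then showing $Q\subset H_E^{+}$ and classifying $F_Q=H_E\cap Q$ by dimension --- is a tidier packaging of the paper's three-way split on the position of $Q$ relative to $H_E$, but the underlying ingredients are identical.
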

\begin{proof}[Proof of Lemma \ref{unionff}] Following the notation of the previous lemma consider a hyperplane $H_E$ such that $E = H_E\cap P$ and consider three cases%
\begin{enumerate}
    \item $Q$ entirely lies on the same side of $H_E$ as $P$. In this case $H_E$ is a supporting hyperplane for $P
    \cup Q$ and $E\subset H_E\cap (P\cup Q)$. Assume that there does not exist a face of $P\cup Q$ collinear to $E$, then $H_E\cap(P\cup Q)$ has dimension at least $k + 1$ and therefore $H_E\cap Q$ is of dimension at least $k + 1$ and consequently $H_E\cap P\subset H_E\cap Q$. Hence $H_E\cap(P\cap Q) = H_E\cap P = E$.
    \item $Q$ does not lie entirely on the one side of $H_E$. In this case by the same argument as in case \ref{(a)} of the previous lemma we get that $E\subset Q$ and E is a face of $P\cap Q$.
    \item $Q$ lies entirely on the opposite side of $H_E$ than $P$. Here proof follows by the same argumetns as in case \ref{(b)}  of Lemma \ref{unionf}.
\end{enumerate}
\end{proof}
With these lemmas in hand, we can prove \eqref{for a fixed v} by independent consideration of collinear classes of faces. Fix  $k$-dimensional face $E$ of $P$.

\noindent\textbf{Case 2.1.} There is no face of $Q$ collinear to E. The second lemma gives that $E$ is a face of $P\cup Q$ or $P\cap Q$.

\noindent\textbf{Case 2.1.1} $E$ is a face of $P\cap Q$. We will need the following.
\begin{lm} \label{convexity of union cones}
    Let $C_1$ and $C_2$ be cones of different dimensions such that their union is convex. Then one of them contains the other.
\end{lm}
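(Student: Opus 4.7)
The plan is to assume without loss of generality that $\dim C_1 < \dim C_2$ and then show $C_1 \subseteq C_2$. The key idea is to use the dimension gap to produce a point of $C_2$ that lies strictly off the linear hull of $C_1$, and then exploit convexity of the union along segments joining this point to arbitrary points of $C_1$.

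More concretely, I would set $L := \lin C_1$, so that $C_1 \subseteq L$ and $\dim L = \dim C_1 < \dim C_2$. Because $C_2$ cannot be contained in the lower-dimensional subspace $L$, there exists some $y \in C_2 \setminus L$. For an arbitrary point $x \in C_1$, I would then consider the segment $[x,y]$. Since $L$ is a linear (in particular affine) subspace containing $x$ but not $y$, the segment meets $L$ in the single point $x$.

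By the assumed convexity of $C_1 \cup C_2$, we have $[x,y] \subseteq C_1 \cup C_2$. Any point of this segment lying in $C_1$ must also lie in $L$, and hence must equal $x$; therefore $[x,y] \setminus \{x\} \subseteq C_2$. Since $C_2$ is closed, passing to the limit along the segment as the parameter tends to the endpoint $x$ yields $x \in C_2$. As $x \in C_1$ was arbitrary, this gives $C_1 \subseteq C_2$, completing the proof.

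The argument is short and the only subtle point — not really an obstacle — is making sure one uses that the cones are closed (so that the limiting point $x$ belongs to $C_2$) and that $L$ is a linear, not merely affine, subspace disjoint from $\{y\}$, which is automatic here since $\lin C_1$ passes through the origin and $y \notin \lin C_1$. No results beyond the basic convexity and closedness of polyhedral cones recalled in Subsection~\ref{Convex cones} are needed.
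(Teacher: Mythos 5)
Your proof is correct. It does, however, take a different route from the paper's. The paper's argument is a local one: for any $x$ in the union, convexity plus the cone property give $x + (C_1\cup C_2)\subseteq C_1\cup C_2$, so every neighborhood of $x$ meets the union in a set of dimension $\max(\dim C_1,\dim C_2)$; the lower-dimensional cone cannot account for this, so points of the higher-dimensional cone accumulate at $x$ and closedness puts $x$ in it. Your argument is global along a single segment: you pick $y\in C_2\setminus\lin C_1$, observe that $[x,y]$ meets $\lin C_1\supseteq C_1$ only at $x$, conclude $[x,y]\setminus\{x\}\subseteq C_2$ from convexity of the union, and finish by closedness of $C_2$. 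Both proofs hinge on closedness in the last step (the paper leaves this, and the dimension count, implicit, so your write-up is actually the more complete of the two). A small bonus of your approach is that it never uses the cone structure beyond the fact that $C_1$ lies in a subspace of dimension $\dim C_1$ missed by some point of $C_2$, so it extends verbatim to closed convex sets of different dimensions; a small redundancy is your worry about $L$ being linear rather than merely affine --- an affine subspace containing $x$ but not $y$ already meets $[x,y]$ only at $x$, so nothing special about passing through the origin is needed there.
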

\begin{proof}[Proof of Lemma \ref{convexity of union cones}]
% Suppose that $\dim C_1 = k, \dim C_2 = n, k>n \text{ and } x \in C_1 \cup C_2.$ Let us show that $x \in C_1$. Assume the converse. Then there exists a convex neighborhood $U$ of the point $x$ such that $U \cap C_1 = \emptyset$. Therefore, $\dim C_1 \cup C_2 \cap U \leq n$. 
% On the other hand, since $C_1 \cup C_2$ is a convex cone, it follows that there are $k$ linearly independent vectors $w_1, \ldots, w_k$ such that $x + \lambda w_1, \ldots, x+ \lambda w_k \in C_1 \cup C_2$ for any nonnegative $\lambda$. This contradicts the inequality $\dim C_1 \cup C_2 \cap U \leq n$. % of the union $C_1$ and $C_2$. 
% The Lemma \ref{convexity of union cones} is proved.
Suppose that $\dim C_1 = k, \dim C_2 = n, k>n \text{ and } x \in C_1 \cup C_2.$ By the convexity condition, $x + C_1 \cup C_2 \subseteq  C_1 \cup C_2 $. Hence for every neigborhood $U$ of $x$ 
$U\cap (C_1 \cup C_2)$ is at least $k$-dimensional, hence $x\in C_1$.

% Suppose that $\dim C_1 = k, \dim C_2 = n, k>n \text{ and } x \in C_1 \cup C_2.$
% Let us show that $x \in C_1$. Assume the converse. Then there exists a convex neighborhood $U$ of the point $x$ such that $U \cap C_1 = \emptyset$. Therefore, $\dim C_1 \cup C_2 \cap U \leq n$.
% By the convexity condition, $x + C_1 \cup C_2 \subseteq  C_1 \cup C_2 $. Therefore
\end{proof}
We know that $E\subset Q$ and $E$ is not a face of $Q$. Hence the cone $T_E(Q):= T_e(Q)$ (for some $e \in \relint(E)$) contains a $k$-dimensional subspace. Also, we know that $Q$ has no faces collinear to $E$, therefore $T_E(Q)$ is a cone of dimension at least $k + 1$ while $T_E(P)$ is of dimension exactly $k$.
%конус $T_E(Q)$ - это конус для точки из относит.внутренности Q
Lemma \ref{convexity of union cones} gives $T_E(P)\subset T_E(Q)$; therefore $T_E(P)= T_E(P\cap Q)$. Now we see that in \eqref{for a fixed v}, the terms with face $E$ coincide and have different signs: one term is in the sum for $P$ and the other is in the sum for $P\cap Q$.

\noindent\textbf{Case 2.1.2} $E$ is a face of $P\cup Q$. By the same argument as above we conclude that cone $T_e(Q), e\in E,$ is at most $(k-1)$-dimensional and $T_E(P)= T_E(P\cup Q)$; therefore terms for face $E$ again coincide.

\noindent\textbf{Case 2.2} There exists face $E_1$ of $Q$ such that $E_1$ and $E$ are collinear. Then there is a face $E_2 = E \cup E_1$ of $P\cup Q$ collinear to E.

\noindent\textbf{Case 2.2.1} There is no face of $P\cap Q$ collinear to $E$. Consider an arbitrary point $p\in \relint (E)$. It is easy to see that $p\notin Q$. Then,
\begin{align}\label{with point p}
    T_E(P) = T_p(P) = T_p(P\cup Q) = T_{E_2}(P\cup Q).
\end{align} 
By doing the same for $Q$ we establish
\begin{align}\label{cone equality}
    T_E(P) = T_{E_1}(Q) = T_{E_2}(P\cup Q).
\end{align}
Denote the cone in the latter equation by $C$. Now the terms in \eqref{for a fixed v} for a face $E$ are the following:
\begin{align*}
    \det (E_2) v_k(C) \alpha(T_v(E_2)) = \det (E) v_k(C) \alpha(T_v(E)) + \det (E_1) v_k(C) \alpha(T_v(E_1)).
\end{align*}
All three determinants are equal, hence the equality follows from the additivity of the solid angle (in this case $\alpha(T_v(P\cap Q)) = 0$).

\noindent\textbf{Case 2.2.2}. There exists a $k$-dimensional face $E_3 = E\cap E_1$ of $P\cap Q$. In this case we have four polytopes $E, E_1, E_2, E_3$ which can be in a different relations. 

\noindent\textbf{Case 2.2.2.1}. $E\nsubseteq E_1$ and $E_1\nsubseteq E$. Then \eqref{cone equality} holds by the same argument as in the previous case. Also, consider point $r\in \relint  (E_3)\subset \relint (E)$. We have  
\begin{align}\label{with point r}
    T_{E_3}(P\cap Q) = T_{r}(P\cap Q) = T_{r}(P)\cap T_{r}(Q) = T_r(P) = T_E(P).
\end{align}
Therefore,
\begin{align*}
    T_E(P) = T_{E_1}(Q) = T_{E_2}(P\cup Q) = T_{E_3}(P\cap Q).
\end{align*}
If we denote the cone in the latter equality by $C,$ the terms in \eqref{for a fixed v} collinear to $E$ will be
\begin{align*}
    \det (E_2) v_k(C) \alpha(T_v(E_2)) &= \det(E) v_k(C) \alpha(T_v(E)) 
    \\
    &+ \det (E_1) v_k(C) \alpha(T_v(E_1))- \det (E_3) v_k(C) \alpha(T_v(E_3)).
\end{align*}
And again we have equal determinants and additivity of the solid angle.

\noindent\textbf{Case 2.2.2.2} $E_1 = E$. Then $E = E_1 = E_2 = E_3$ and 
\begin{align*}
    \alpha(T_v(E)) = \alpha(T_v(E_1)) =\alpha(T_v(E_2)) = \alpha(T_v(E_3)) = \alpha.
\end{align*}
In this case \eqref{for a fixed v} is reduced to %becomes
\begin{align*}
    v_k(T_{E_2}(P\cup Q)) = v_k(T_{E}(P)) + v_k(T_{E_1}(Q)) - v_k(T_{E_3}(P\cap Q)),
\end{align*}
which follows from the additivity property of the conic intrinsic volume (see \cite[Section 6.5]{SW08}).

\noindent\textbf{Case 2.2.2.3} $E\subset E_1$, $E\neq E_1$. Then $E_2 = E_1$, $E_3 = E$. The same arguments as in \eqref{with point p} and in \eqref{with point r} give
\begin{align*}
    T_{E_1}(Q) = T_{E_2}(P\cup Q), \quad T_{E}(P) = T_{E_3}(P\cap Q).
\end{align*}

Now the terms in \eqref{for a fixed v} for $P$ coincide with the terms for $P\cap Q$ and the terms for $Q$ coincide with the terms for $P\cup Q$.

Thus, $A_{k}(\cdot)$ is a valuation. The translation-invariance of $A_{k}(\cdot)$ is trivial.

 It is easily shown that $A_0 (P) = 1$ and $A_d (P) = A(P)$:
 \begin{gather*}
   A_0(P)=\sum_{v- \text{vertex of } P} \upsilon_0(T_v(P)) = 1,
   \\
   \nonumber
    A_d(P)=\sum_{v\in P\cap \Z^d} \upsilon_d(T_P(P))\alpha(T_v(P)) = \sum_{v\in P\cap \Z^d}\alpha(T_v(P))= A(P).
 \end{gather*}
Here, we write by definition $\alpha(T_v(v))=1, \det(v) = 1$ when $v$ is vertex of $P$, and used the properties of the conic intrinsic volumes, in particular, the fact that for any $P \in \mathcal{P}(\Z^d)$ we have $\slim_{v- \text{vertex of } P} \upsilon_0(T_v(P)) = 1.$

Evidently, $A_k(P) = 0$ in the case when $\dim P< k$, since $P$ has no $k$ - dimensional faces.

The fourth part of Theorem \ref{2000} is equivalent to the first part Theorem \ref{1021}, the proof of which is presented below.
\end{proof}
%As mentioned in Subsection \ref{Valuations} valuations agrees with polynomials. Therefore, $A_{m,P}(t)$ is polynomial in $t$.
%The following statement shows that the leading coefficient of this polynomial is $V_m(P)$, which implies Theorem \ref{}.
%\begin{st}
%Let $P \in \mathcal{P}(\Z^d)$ is a lattice convex $k$- dimensional polytope, then the constant term
%of $A_{m,P}$ is $0$,the leading coefficient is $V_m(P)$ and $A_{m,P}$ is always even or odd.
%\end{st}

\subsection{Theorem~\ref{1021}: Properties of intrinsic Ehrhart polynomials}\label{816}
\begin{proof}
Rewrite \eqref{valuation formula} in the following way:
\begin{align*}
    A_{k}(P) &= \slim_{F \in \mathcal{F}_{k}(P)}\slim_{v \in F\cap \Z^d}  \mathbbm {1}[v\in F] \det(F)\upsilon_{k}(T_{F}(P)) \alpha(T_{v}(F))
    \\
    &= \slim_{F \in \mathcal{F}_{k}(P)}\det(F)\upsilon_{k}(T_{F}(P))\slim_{v \in F\cap \Z^d}  \mathbbm {1}[v\in F] \alpha(T_{v}(F))
    \\
    &=\slim_{F \in \mathcal{F}_{k}(P)}\det(F)\upsilon_{k}(T_{F}(P))\slim_{E\in \mathcal{F}(F)}\slim_{v \in \relint(E)}  \alpha(T_{v}(F))
    \\
    &= \slim_{F \in \mathcal{F}_{k}(P)}\det(F)\upsilon_{k}(T_{F}(P))\slim_{E\in \mathcal{F}(F)}L_{\relint(E)}  \alpha(T_{E}(F)).
\end{align*}
Hence,% by Lemma \ref{connectionmacehr},
\begin{align}\label{polynomial A_k,P}
    A_{k, P}(t) &= \slim_{F \in \mathcal{F}_{k}(P)}\det(F)\upsilon_{k}(T_{F}(P))\slim_{E\in \mathcal{F}(F)}L_{\relint(E)}(t)  \alpha(T_{E}(F))
    \\
    \nonumber &= \slim_{F \in \mathcal{F}_{k}(P)}\det(F)\upsilon_{k}(T_{F}(P))  A_F(t).
\end{align}
Here by $A_F(t)$ we mean the sum $\slim_{E\in \mathcal{F}(F)}L_{\relint(E)}(t)  \alpha(T_{E}(F))$ in $k$-dimensional space in $\R^d$, generated by the face $F$. The results mentioned in section \ref{2329} are correct for $A_F(t)$ with minor changes. By %Theorem \ref{Macdonaldpolynomial}
\eqref{1010}, \eqref{1011}, $A_F(t)$ is always even or odd polynomial of degree $k$ with zero constant term. Therefore, the same holds for $ A_{k, P}(t)$.
%Polynomials $L_{\relint(E)}$ are described by \eqref{Ehrhart–Macdonald reciprocity law}. Theorem \ref{Ehrhart theorem} gives that the constant term of $L_{\relint(E)}$ equals $(-1)^{\dim E}$, therefore the constant term of $A_{m,P}$ equals
%\begin{align*}
  %  \slim_{F \in \mathcal{F}_{m}(P)}\det(F)\upsilon_{m}(T_{F}(P))\slim_{E\in \mathcal{F}(F)}(-1)^{\dim E}  \alpha(T_{E}(F)).
%\end{align*}
%Theorem \ref{Brianchon} states that the inner sum in the latter formula equals $0$, hence the constant term of $A_{m, P}$ is also $0$.

\medskip To calculate the leading coefficient notice that the terms in \eqref{polynomial A_k,P} where $E\neq F$ have degree strictly less than $k$ and consequently do not impact the leading coefficient; the terms $L_{\relint(E)}(t)$ where $E=F$, by \eqref{1435} and \eqref{651},
%{Ehrhart theorem}, 
have the leading coefficient $\frac{|F|}{\det(F)}$. Thus, the leading coefficient of $A_{k, P}(t)$ equals
\begin{align*}
    \slim_{F \in \mathcal{F}_{k}(P)}\det(F)\upsilon_{k}(T_{F}(P))\frac{|F|}{\det(F)} = \slim_{F \in \mathcal{F}_{k}(P)}\upsilon_{k}(T_{F}(P)) |F| \stackrel{\eqref{2107a}}{=} V_k(P).
\end{align*}
%Here the first equality follows from the definition of relative volume $vol F$ given in Subsection \ref{ehrhart}.

Finally we shall show that reciprocity law holds:
\begin{align*}
     A_{k, P}(-t) &= \slim_{F \in \mathcal{F}_{k}(P)}\det(F)\upsilon_{k}(T_{F}(P))  A_F(-t)
     \\
     &\stackrel{\eqref{651}}{=} \slim_{F \in \mathcal{F}_{k}(P)}\det(F)\upsilon_{k}(T_{F}(P))  (-1)^{\dim F}A_F(t)
     \\
     &= (-1)^{k}  A_{k, P}(t).
\end{align*}
\end{proof}

\subsection{Theorem~\ref{1548}: Properties of Grassmann angle valuations}\label{821}
%\begin{thm}\label{val}
%Suppose $P \in \mathcal {P} (\Z^d).$ Then for $m\in \{0, \dots ,  d\}$ 
  %\begin{align}
      %G_{m,P} := \sum_{v \in P\cap\mathbb{Z}^d} \alpha_{m}(T_{v}(P)) 
  %\end{align} is a translation-invariant valuation.
  %является инвариантной относительно сдвигов валюацией на  $\mathcal{P} (\mathbb{Z}^{d})$ -множестве всех выпуклых $d$-мерных многогранников с вершинами в узлах решетки $ \mathbb{Z}^d$ .  
%\end{thm}
\begin{proof}
By the definition of translation-invariant valuation, we need to show that
\begin{enumerate}
    \item $G_{k}(\emptyset) = 0 ;$
    \item  $G_{k}(P \cup Q)  + G_{k}(P \cap Q)  =  G_{k}(P) + G_{k}(Q)  $ for any $P, Q \in \mathcal{P} ( \mathbb{Z}^d) ;$  
    \item $G_{k}(P+z) = G_{k}(P) $ for any $P \in \mathcal{P} ( \mathbb{Z}^d) $ and $z\in \mathbb{Z}^d$.
\end{enumerate}
%1) 
%2) $G_{m,P \cup Q}  + G_{m,P \cap Q}  =  G_{m,P} + G_{m,Q}  $ для любых $P, Q \in \mathcal{P} ( \mathbb{Z}^d)  $ . \\
%3) $G_{m,P+z} = G_{m,P} $ для любых $P \in \mathcal{P} ( \mathbb{Z}^d) $ и $z\in \mathbb{Z}^d$.\\
%Первое и третье свойства очевидно выполнены. Проверим второе. \\
The first and third properties are obvious.
Let us check the second one.\\
%Распишем  2) по определению $G_{m,P} $, получим, что нужно доказать следующее:
We must prove that
\begin{align*} 
\sum_{v \in (P\cup Q)\cap\mathbb{Z}^d} \alpha_{k}(T_{v}(P\cup Q)) &+ \sum_{v \in (P\cap Q)\cap\mathbb{Z}^d} \alpha_{k}(T_{v}(P\cap Q)) 
\\
&=  \sum_{v \in P\cap\mathbb{Z}^d} \alpha_{k}(T_{v}(P))+  \sum_{v \in Q\cap\mathbb{Z}^d} \alpha_{k}(T_{v}(Q)).
\end{align*}
Using Theorem \ref{Crofton}  
%Воспользуемся выражением грассмановых углов через сумму внутренних объёмов и тем, что
and the trivial fact that
\begin{align*}
    T_{v}(P\cup Q)=T_{v}(P) \cup T_{v}(Q),
    \\
    T_{v}(P\cap Q)=T_{v}(P) \cap T_{v}(Q),
\end{align*}
we get 
\begin{gather*}
    \sum_{v \in (P\cup Q)\cap\mathbb{Z}^d} \alpha_{k}(T_{v}(P\cup Q)) + \sum_{v \in (P\cap Q)\cap\mathbb{Z}^d} \alpha_{k}(T_{v}(P\cap Q))\\= \sum_{v \in (P\cup Q)\cap\mathbb{Z}^d} \sum_{i\geq 1}  \upsilon_{k+i} (T_{v}(P\cup Q))+\sum_{v \in (P\cap Q)\cap\mathbb{Z}^d} \sum_{i\geq 1}  \upsilon_{k+i} (T_{v}(P\cap Q))\\ =  \sum_{v \in (P\cup Q)\cap\mathbb{Z}^d} \sum_{i\geq 1}  \upsilon_{k+i} (T_{v}(P) \cup T_{v}(Q))+\sum_{v \in (P\cap Q)\cap\mathbb{Z}^d} \sum_{i\geq 1}  \upsilon_{k+i} (T_{v}(P) \cap T_{v}(Q)).
\end{gather*}
%По аддитивности внутренних объемов можем переписать последнее выражение в виде:
By the additivity of the conic intrinsic volumes (see \cite[Section 6.5]{SW08}), we can rewrite the last expression in the following form:
\begin{gather*}
\sum_{v \in (P\cup Q)\cap\mathbb{Z}^d} \sum_{i\geq 1}  \upsilon_{k+i} (T_{v}(P) \cup T_{v}(Q))+\sum_{v \in (P\cap Q)\cap\mathbb{Z}^d} \sum_{i\geq 1}  \upsilon_{k+i} (T_{v}(P) \cap T_{v}(Q)) 
\\
=\sum_{v \in (P\cup Q)\cap\mathbb{Z}^d} \sum_{i\geq 1}  \upsilon_{k+i} (T_{v}(P)) + \upsilon_{k+i} (T_{v}(Q)) - \upsilon_{k+i} (T_{v}(P) \cap T_{v}(Q)) \\+\sum_{v \in (P\cap Q)\cap\mathbb{Z}^d} \sum_{i\geq 1}  \upsilon_{k+i} (T_{v}(P) \cap T_{v}(Q)) 
\\
=\sum_{v \in (P\cup Q)\cap\mathbb{Z}^d} \sum_{i\geq 1}  \upsilon_{k+i} (T_{v}(P)) + \upsilon_{k+i} (T_{v}(Q))\\ =\sum_{v \in P\cap\mathbb{Z}^d} \sum_{i\geq 1}  \upsilon_{k+i} (T_{v}(P)) + \sum_{v \in Q\cap\mathbb{Z}^d} \sum_{i\geq 1}\upsilon_{k+i} (T_{v}(Q)) 
\\
= \sum_{v \in P\cap\mathbb{Z}^d} \alpha_{k}(T_{v}(P))+  \sum_{v \in Q\cap\mathbb{Z}^d} \alpha_{k}(T_{v}(Q)). 
\end{gather*}
This implies that $G_k(\cdot)$ is a translation-invariant valuation on $\mathcal{P}(\Z^d)$.

%Let  $P\subseteq \mathbb{R}^d$ be a convex lattice $k$ - dimensional polytope ($k \in \{0, \ldots, d\}$).\\
%By definition, put
%\begin{align}
 %   G_{m,P} (t) := \sum_{v \in tP\cap\mathbb{Z}^d} \alpha_{m}(T_{v}(tP)),
%\end{align}
%where $m\in \{0, \dots ,  d\},  t \in \mathbb{N}$,  $T_v(tP)$ a tangent cone at point $v$ with respect to $tP$.
%\begin{rem}
Further, we see that for full-dimensional $P \in \mathcal{P}(\Z^d),$ 
\begin{align*}
G_{d-1}(P) = \sum_{v \in P\cap\mathbb{Z}^d} \alpha_{d-1}(T_{v}(P)) =  \sum_{v \in P\cap\mathbb{Z}^d} \alpha(T_{v}(P)) = A(P) 
\end{align*} 
 %Macdonald 
is the solid-angle valuation for the polytope $ P \in \mathcal{P}(\Z^d)$.%, which is make sense if and only if $k = d$.
%\end{rem}

Note also that since $\alpha_{d}(C)\equiv 0  $ for any cone $C$, we have
$$G_{d}(P)\equiv 0 .$$
%\begin{rem}
The following step is to show that the Ehrhart valuation $L(P)$, up to a constant, coincides with $ G_ {0} (P) $:
\begin{align*}
 G_{0}(P) = \sum_{v \in P\cap\mathbb{Z}^d} \alpha_{0}(T_{v}(P)) =  \sum_{v \in P\cap\mathbb{Z}^d} \sum_{i\geq 1}  \upsilon_{i} (T_{v}(P)) \\ =  \sum_{v \in P\cap\mathbb{Z}^d} 1 -\upsilon_{0} (T_{v}(P)) = L(P) -   \sum_{v \in P\cap\mathbb{Z}^d}\upsilon_{0} (T_{v}(P)). 
\end{align*}
%Далее,  $\upsilon_{0} (T_{v}(tP))=0$ когда $v $ не является вершиной многогранника $tP$. Действительно, $\upsilon_{0} (T_{v}(tP))= \mathbb{P} \{ \prod_{T_{v}(tP)} (g) \in \relint \text{ некоторой } 0 - \text{мерной грани } T_{v}(tP) \}$. Но если $v$ - не вершина $tP$, то $T_{v}(tP)$ не содержит 0-мерных граней.
Further, $ \upsilon_ {0} (T_ {v} (P)) = 0 $ when $ v $ is not a vertex of the polytope $ P $. Indeed, % $ \upsilon_ {0} (T_ {v} (tP)) = \mathbb {P} \{\prod_ {T_ {v} (tP)} (g) \in \relint \text { some } 0 - \text {dimensional face} T_ {v} (tP) \} $.
 $\upsilon_0(T_ {v} (P)) = \mathbb{P}[\Pi_{T_ {v} (P)}(g)\in \relint( \text{of some } 0-\text{ dimensional face of } T_ {v} (P))].$
But if $ v $ is not a vertex of $ P $, then $ T_ {v} (P) $ does not contain $0$ -- dimensional faces.

So,
\begin{align*}
 G_{0}(P) = L(P) -   \sum_{v - \text{ vertex of } P}\upsilon_{0} (T_{v}(P)) = L(P) - 1.
 \end{align*}
In the last step, we  used the %well-known
fact that $\slim_{v - \text{ vertex of } P}\upsilon_{0} (T_{v}(P)) = \slim_{v - \text{ vertex of } P} \alpha (T_{v}(P)^{\polar}) =  1$ for $P \in \mathcal{P} (\Z^d)$.

%\end{rem}
%The analogue of Ehrhart’s theorem \cite{eE67b} in the world of modified Grassmann 
%angles is as follows.
The fourth part of Theorem \ref{1548} easily follows from inequality \eqref{inequality} and properties of angles $\alpha_k$, including Theorem \ref{connection}.
\end{proof}
%\begin{st} \label{pol1}
 %Suppose $m\in \{0, \dots ,  d-1\}$ and $ P \in \mathcal{P} (\Z^d), \dim P = k \geq m+1$.  Then $G_{m,P} (t) := \slim_{v \in tP\cap\mathbb{Z}^d} \alpha_{m}(T_{v}(tP)) $  is a polynomial of degree $k$ whose leading coefficient
%is $vol P$.
%\end{st}

%Now we shall give the following definition.
%A polynomial $G_{m,P} %(t)
%$ we will call  $m$-th \textit{Grassmann polynomial}.
%In what follows 
%This completes the proof.
%\end{proof}
%\begin{rem}
%Note that Theorem \ref{val} implies polynomiality of $G_{m,P}(t)$ because of McMullen result (see Section  \ref{Valuations}).
%\end{rem}
\subsection{Theorem~\ref{824}: Properties of Grassmann polynomials}\label{823}
\begin{proof}
%Многогранник $tP$ представим в виде дизъюнктного объединения относительных внутренностей его граней :
Theorem \ref{1548} implies polynomiality of $G_{k,P}(t)$ because of McMullen's result (see \eqref{1621}).
Nevertheless, let us show another standard short reasoning.\\
We can represent the dilated polytope $tP$ as a disjoint union of its relative open faces:
\begin{align*}
 tP = \bigcup_{F\in \mathcal{F} (P)} \relint(tF).
\end{align*}
%$$ tP = \cup_{F \subseteq P} \relint (tF).$$
Hence we can write $$G_{k,P} (t) = \slim_{v \in tP\cap\mathbb{Z}^d} \alpha_{k}(T_{v}(tP)) = \slim_{F\in \mathcal{F} (P)}\slim_{v \in tP\cap\mathbb{Z}^d} \alpha_{k}(T_{v}(tP))\mathbbm{1}[v \in \relint (tF)] .$$
Since  $T_{v_{1}}(tP)=T_{v_{2}}(tP) $ for any $v_{1} , v_{2} \in \relint(tF) $, we see that   $\alpha_{k}(T_{v}(tP))$ is constant on each relatively open face $ \relint (tF)$, and we denote this
constant by $\alpha_{k,F,P} $,  whence, %$\relint(tF)$ для каждой грани $F\subseteq P$, обозначим эту константу\\
%Тогда:
\begin{align} \label{polynomialG}
G_{k,P} (t) = \slim_{F\in \mathcal{F}(P) } \alpha_{k,F,P}  \slim_{v \in tP\cap\mathbb{Z}^d} %\mathbf{1}_{\relint (tF)} (v) 
\mathbbm{1}[v \in \relint (tF)]
=
\slim_{F\in \mathcal{F}(P) } \alpha_{k,F,P} L_{\relint(F)} (t) .
\end{align}
%где $L_{\relint(F)} (t)$ - многочлен Эрхарта для $\relint(F)$.\\
Thus $ G_ {k, P} (t) $ is a polynomial in $ t $ of degree at most $ d $, because $L_{\relint(F)} %(t)
$ is a polynomial.\\
Next, notice that every term in the right hand side of \eqref{polynomialG} has degree strictly less then $d$, except the one where $F=P$. Hence, the leading coefficient of $G_ {k, P} (t)$ is equal to $\alpha_{k,P,P} |P|$, since the leading coefficient of $L_{\relint(P)} (t)$ is $|P|$, because $P$ is full-dimensional. Moreover,  $\alpha_{k,P,P} = 1 $, therefore $G_{k}^{(d)}(P)=|P|$.

It remains to check that $G_{k}^{(0)}(P)=0.$
\\
%\begin{st}\label{constantterm}
%If $P \in \mathcal{P}(\Z^d)$ is an lattice convex $d$- dimensional polytope, then the constant term
%of the Grassmann polynomial $G_{m,P} %(t)
%$ is $0$.
%Свободный член многочлена $G_{m,P} (t)$ при $ 1\leq m \leq d-1$ равен $0$.
%\end{st}
%\begin{proof}
For $k=0$ it follows from the fact that constant term of $L_P(t)$ is $1$ and $G_{0,P}(t) = L_P(t) - 1$.\\
Otherwise, first note that from \eqref{polynomialG} %the previous proof of statement \ref{pol1} 
we have the relation %что из предыдущего доказательства верно соотношение 
$$ G_{k,P} (0) = \slim_{F\in \mathcal{F}(P) } \alpha_{k,F,P} L_{\relint(F)} (0) =\slim _{F\in \mathcal{F}(P) } \alpha_{k,F,P} (-1)^{dim F}. $$
Here the last equality holds due to the fact that $ L_ {P} (0) = 1 $ and due to the Ehrhart–Macdonald reciprocity law, which was mentioned in the introduction: %section \ref{ehrhart}:
%Здесь последнее равенство выполняется в силу того, что $L_{P}(0) =1$ , а также в силу второй теоремы Эрхарта, которая утверждает, что для выпуклых решетчатых многогранников имеет место соотношение :
$$ L_{P} (-t) = (-1)^{dim P}L_{\relint(P)} (t). $$ 
%Итак, чтобы выразить свободный член многочлена $ G_{m,P}$ нужно понять, чему равна сумма
So, to express the constant term of the polynomial $ G_ {k, P} $, we need to understand what is the sum $ \slim_ {F\in \mathcal{F}(P)} \alpha_ {k, F, P} (-1) ^ {dim F} .$
Again, as in the proof of Theorem \ref{827}, we apply formula (\ref{connection1}) to obtain
%\begin{align} %\label{connection1}
 %\alpha_{m, F_{i}^{j} , P }  = \frac{1}{2} \left(\gamma^{d-m,d}(P,F_{i}^{j}) + \gamma^{d-m-1,d}(P,F_{i}^{j}) \right).
%\end{align}
%Тогда при $K^{d} := P$ справедлива следующая цепочка равенств :
\begin{gather*}
%\slim_{F\subseteq P} \alpha_{m,F,P} (-1)^{dim F}
 \slim_ {F\in \mathcal{F}(P)} \alpha_ {k, F, P} (-1) ^ {dim F} = \sum_{j=0}^{d} (-1)^{j} \sum_{F^j \in \mathcal{F}_{j}(P)} \alpha_{k, F^{j}, P}   \\
= \sum_{j=0}^{d-1} (-1)^{j} \sum_{F^j \in \mathcal{F}_{j}(P)} \frac{1}{2}\left(\gamma^{d-k,d}(P,F^{j})+ \gamma^{d-k-1,d}(P,F^{j})\right)  + (-1)^{d} \alpha_{k, P, P}. 
\end{gather*}
%Далее воспользуемся формулой (5), а также тем, что 
Next, we use formula (\ref{gammagr}) and the fact that $\alpha_{k, P, P} =1$  and $\gamma^{d-k,d}(P,P)= \gamma^{d-k-1,d}(P,P)=1$ to obtain
\begin{gather*}
\frac{1}{2}\sum_{j=0}^{d-1} (-1)^{j} \sum_{F^j \in \mathcal{F}_{j}(P)} \gamma^{d-k,d}(P,F^{j})+\frac{1}{2}\sum_{j=0}^{d-1} (-1)^{j} \sum_{F^j \in \mathcal{F}_{j}(P)} \gamma^{d-k-1,d}(P,F^{j})  + (-1)^{d} \\= \frac{1}{2} \left( (-1)^{k} - (-1)^{d}\gamma^{d-k,d}(P,P)\right) + \frac{1}{2} \left( (-1)^{k+1} - (-1)^{d}\gamma^{d-k-1,d}(P,P)\right) +(-1)^{d}\\ = \frac{1}{2} \left( (-1)^{k} - (-1)^{d}\right) + \frac{1}{2} \left( (-1)^{k+1} - (-1)^{d}\right) +(-1)^{d} = 0.
\end{gather*}
Thus, $\slim_{F\in \mathcal{F}(P)} \alpha_{k,F,P} (-1)^{dim F} =0$, which completes the proof.
\end{proof}

 In the process of proving Theorem \ref{824} we established the following analogue of the Brianchon--Gram relation \eqref{Brianchon} for the modified Grassmann angles.
\begin{st}
    If $P \in \mathcal{P} (\Z^d)$ is $d$-dimensional polytope, then
\begin{align*}
  \slim_{F\in \mathcal{F}(P)} \alpha_{k,F,P} (-1)^{dim F} =0.
\end{align*}
\end{st}

%, because $k \geq m+1$. This concludes the proof.
%ref there is only one term in 
%Let us prove that leading coefficient of $G_{m,P}$ is $ vol P$.\\
%Indeed, this follows from the fact that the  leading coefficient of $L_{\relint(P)} (t)$ is $ \vol P$ and 
%Действительно, поскольку  $G_{m,P} (t) = \slim_{F\subseteq P} \alpha_{m,F,P} L_{\relint(F)} (t) $ , то ненулевой коэффициент при $t^d$ в данной сумме будет только в одном слагаемом, а именно в $\alpha_{m,P,P} L_{\relint(P)} (t)$. Т.к. $\alpha_{m,P,P}$ = 1 из определения грассманова угла, а коэффициент при $t^d$ в $L_{\relint(P)} (t)$ равен $ vol P$, то и старший коэффициент $G_{m,P} (t)$ равен $ vol P$.

%Now we shall give the following definition.
%A polynomial $G_{m,P} %(t)
%$ we will call  $m$-th \textit{Grassmann polynomial}.
%In what follows 
%This completes the proof.

%\begin{rem}
%Note that Theorem \ref{val} implies polynomiality of $G_{m,P}(t)$ because of McMullen result (see Section  \ref{Valuations}).
%\end{rem}
\subsection{Theorem~\ref{938}: Grassmann polynomials of Reeve's tetrahedron}\label{943}

%Consider a $\textit{Reeve’s tetrahedron } $  in $\mathbb{R}^3 $ : $\Delta_{h} := conv \{ (0,0,0) , (0,1,0) , (1,0,0) ,  (1,1,h) \}$, where $ h $ is a positive integer. %- фиксированное натуральное число.

%This tetrahedron was used by Reeve \cite{jR57} to show that %no linear analogue of Pick’s theorem can hold in dimension 3.
%higher-dimensional generalizations of Pick's theorem do not exist.

%Furthermore, the Reeve's tetrahedron is an example of a polytope for which the Ehrhart and Macdonald polynomials have negative coefficients (see \cite{BR15}, \cite{b09positivity}), namely:
%Тетраэдр Рива является примером многогранника, для которого многочлены Эрхарта и Макдональда имеют отрицательные коэффициенты.
%Многочлен Эрхарта для тетраэдра Рива
%\begin{align*}
%&L_{\Delta_{h}}(t) =\frac{h}{6}t^3 + t^{2} +\left(2 - \frac{h}{6} \right)t +1, 
%\\
%\end{align*}
%Многочлен Макдональда для тетраэдра Рива 
%\begin{align*}
%&A_{\Delta_{h}}(t) =\frac{h}{6}t^3 +  \left ( S - \frac{h}{6} \right) t.
%\end{align*}
%Here $ S := S (\Delta_{h})  $ is the sum of the solid angles at the
%vertices of $\Delta_{h}$. %сумма телесных углов при вершинах тетраэдра , то есть 
%$S (\Delta) = \slim_{v-\text{ вершина } \Delta} \alpha_{2} (T_{v}(\Delta))  $.

%So, in terms of Grassmann polynomials for $\Delta_{h}$ we can write
%\begin{align*}
%G_{0, \Delta_{h}}(t) &= L_{\Delta_{h}}(t) - 1 = \frac{h}{6}t^3 + t^{2} + \left( 2 - \frac{h}{6} \right) t, 
%\\
%G_{2, \Delta_{h}}(t) &= A_{\Delta_{h}}(t) = \frac{h}{6}t^3 + \left ( S - \frac{h}{6} \right) t.
%\end{align*}

%Найдем коэффициенты многочлена
\begin{proof}
Let us find the coefficients of the polynomial $G_{1, \Delta_{h}}(t)$:
\begin{align*}
G_{1, \Delta_{h}}(t) = \sum_{v \in t\Delta_{h}\cap\mathbb{Z}^d} \alpha_{1}(T_{v}(t\Delta_{h})).
\end{align*}
%Для этого напомним определение двойственного конуса и  докажем следующее утверждение.
To this end, we need to recall the definition of the polar cone (see Section \ref{Convex cones}) and prove the following statement.
%Пусть $C $-- конус, порожденный векторами ${\displaystyle \{v_{1},\ldots ,v_{k}\}} :\\ {\displaystyle C=\{a_{1}v_{1}+\cdots +a_{k}v_{k}\mid a_{i}\in \mathbb {R} _{\geq 0},v_{i}\in \mathbb {R} ^{d}\}}$
%\begin{defn}
%$\textit{Двойственным конусом}$ к выпуклому многогранному конусу $C$, называется множество $ C^{\star} = \{v\in V\;:\;\forall w\in C,\langle w,v\rangle \geq 0\}$.
%\end{defn}
%\begin{rem}
%$C^{\star}$ также является выпуклым конусом.
%\end{rem}
Let $ n $ denote the normal to the random %hyperplane 
subspace $ W_ {d-1} $ in $ d $-dimensional space, with a fixed direction, passing through $ 0 $.
%Обозначим за $n$ -- нормаль к случайной гиперплоскости $W_{d-1}$ в $d$-мерном пространстве, с фиксированным направлением, проходящую через $0$.
\begin{st}\label{polar}
For $ P \in \mathcal{P} (\mathbb{Z}^{d}) $ and $v \in  P\cap\mathbb{Z}^d $ the following two
conditions are equivalent:
\begin{enumerate}
    \item[(a)] $T_{v}(P) \cap W_{d-1} = \{0\};$
    \item[(b)] $\big\{ n \cap \relint( T_{v}(P)^{\polar}) \big\} \bigcup \big\{ -n \cap \relint( T_{v}(P)^{\polar}) \big\}  \neq \{0\}.$
\end{enumerate}
 %$\\
%2) $\big\{ n \cap Int( T_{v}(P)
%1)  )^{\star} \big\} \bigcup \big\{ -n \cap Int ( T_{v}(P) )^{\star} \big\}  \neq \{0\}.$
\end{st}
\begin{proof}
%Доказательство представляет собой цепочку равносильных переходов
The proof is a chain of equivalent transitions:
\begin{align*}
T_{v}(P) \cap W_{d-1} = \{0\} \Leftrightarrow \big\{\forall x \in T_{v}(P) \text{  } \langle x,n \rangle > 0 \big\} \bigcup \big\{\forall x \in T_{v}(P) \text{  } \langle x,n \rangle < 0 \big\}  \Leftrightarrow\\ 
\big\{ n \cap \relint( T_{v}(P)^{\polar}) \big\} \bigcup \big\{ -n \cap \relint( T_{v}(P)^{\polar}) \big\}  \neq \{0\}.%\big\{ n \in Int( T_{v}(P) )^{\star} \big\}  \bigcup \big\{  -n \in Int( T_{v}(P) )^{\star}\big\}
\end{align*} 
\end{proof}
Denote by $l$ the line containing the normal $ n $. Then
\begin{align*}
\mathbb{P} [W_{d-1}\cap T_{v}(P) \neq  \{0\} ] &= 1 - \mathbb{P} [W_{d-1} \cap T_{v}(P) =  \{0\} ]
\\
&= 1 - \mathbb{P}\big[\big\{ n \in \relint( T_{v}(P)^{\polar}) \big\} \bigcup \big\{ -n \in \relint( T_{v}(P)^{\polar}) \big\} \big]
\\
&= 1 - \mathbb{P}[ l \cap \relint( T_{v}(P)^{\polar}) \neq   \{0\} ].
\end{align*}
Further, we apply  Theorem \ref{connection} and Statement \ref{polar} to $P\in \mathcal{P} (\mathbb{Z}^{3}) $ with $\dim P = 3$: 
\begin{align*} 
G_{1, P}(t) &= \sum_{v \in tP\cap\mathbb{Z}^3} \alpha_{1}(T_{v}(tP)) = \sum_{v \in tP\cap\mathbb{Z}^3} \mathbb{P} [W_{2}^{+} \cap  (T_{v}(tP)) \neq  \{0\} ] 
\\
&= \sum_{v \in tP\cap\mathbb{Z}^3} \frac{1}{2} \left( \mathbb{P} [W_{2} \cap (T_{v}(tP))  \neq  \{0\} ] + \mathbb{P} [W_{1} \cap  (T_{v}(tP)) \neq  \{0\} ]\right)
\\
&= \frac{1}{2}\left(\sum_{v \in tP\cap\mathbb{Z}^3} 1 -  \sum_{v \in tP\cap\mathbb{Z}^3} \mathbb{P}[ l \cap  \relint(T_{v}(tP)^{\polar}) \neq   \{0\} ] \right) 
\\
&+ \frac{1}{2}\sum_{v \in  tP\cap\mathbb{Z}^3} \mathbb{P} [W_{1} \cap  (T_{v}(tP)) \neq  \{0\} ].
\end{align*}
In the case of Reeve’s tetrahedron, we have
\begin{gather}
\label{Reeve} G_{1, \Delta_{h}}(t) = \sum_{v \in t\Delta_{h}\cap\mathbb{Z}^3} \alpha_{1}(T_{v}(t\Delta_{h})) = \sum_{v \in t\Delta_{h}\cap\mathbb{Z}^3} \mathbb{P} [W_{2}^{+} \cap (T_{v}(t\Delta_{h})) \neq  \{0\} ] 
\\
\nonumber =  \sum_{v \in t\Delta_{h}\cap\mathbb{Z}^3}\frac{1}{2} \left( \mathbb{P} [W_{2} \cap (T_{v}(t\Delta_{h}))  \neq  \{0\} ] + \mathbb{P} [W_{1} \cap  (T_{v}(t\Delta_{h})) \neq  \{0\} ] \right)
\\
\nonumber =\frac{1}{2} \big(\sum_{v \in t\Delta_{h}\cap\mathbb{Z}^3} 1 -  \sum_{v \in t\Delta_{h}\cap\mathbb{Z}^3} \mathbb{P}[ l \cap \relint( T_{v}(t\Delta_{h})^{\polar}) \neq   \{0\} ] \text{ } \big) + \frac{1}{2} \sum_{v \in t\Delta_{h}\cap\mathbb{Z}^3} \mathbb{P} [W_{1} \cap  (T_{v}(t\Delta_{h})) \neq  \{0\} ] 
\\
\nonumber  = \frac{1}{2} L_{\Delta_{h}}(t) - 1 + A_{\Delta_{h}}(t) -\frac{1}{2} L_{\relint (\Delta_{h})} (t)
\\ \nonumber
= \frac{1}{2}\left(\frac{h}{6}t^3 + t^{2} +\left(2 - \frac{h}{6} \right)t +1\right) -1 +\frac{h}{6}t^3 +  \left( S - \frac{h}{6} \right) t 
 - \frac{1}{2} \left( \frac{h}{6}t^3 - t^{2} +\left(2 - \frac{h}{6} \right)t -1\right) 
 \\
 =  \nonumber \frac{h}{6}t^3 + t^2 + \left( S - \frac{h}{6} \right) t .
\end{gather}
Here the fifth equality follows from the fact that
%В четвертом равенстве воспользовались тем, что можно вычислить
$$\sum_{v \in t\Delta_{h}\cap\mathbb{Z}^3}\mathbb{P}[ l \cap \relint( T_{v}(t\Delta_{h})^{\polar}) \neq   \{0\} ] \text{ }  = \sum_{v -\text{ vertex of } t\Delta_{h}} \mathbb{P}[ l \cap \relint( T_{v}(t\Delta_{h})^{\polar}) \neq   \{0\} ] \text{ }.  $$
Indeed, if ${v \in t\Delta_{h}\cap\mathbb{Z}^3}$ is not a vertex of $t\Delta_{h}$, then the polar cone lies in a linear subspace of dimension at most %$ d-1$ 
$2$, hence $\mathbb{P}[ l \cap  \relint( T_{v}(t\Delta_{h})^{\polar}) \neq   \{0\} ]=0.$\\
%Нетрудно заметить, что 
 Moreover, from \eqref{sol2} and the identity $$\slim_{v - \text{ vertex of } P} \alpha (T_{v}(P)^{\polar}) =  1$$ for $P \in \mathcal{P} (\Z^d)$ it easily follows that
$$\sum_{v -\text{ vertex of } t\Delta_{h}} \mathbb{P}[ l \cap \relint( T_{v}(t\Delta_{h})^{\polar})  \neq   \{0\} ] \text{ }= \sum_{v -\text{ vertex of } t\Delta_{h}} 2\alpha_{2} ( T_{v}(t\Delta_{h})^{\polar})= 2. $$
%Для пятого равенства вычислили многочлен 
In the penultimate step in \eqref{Reeve}, we used the Ehrhart–Macdonald reciprocity law $L_{\Delta_{h}}(-t) = (-1)^{3}L_{\relint (\Delta_{h})}(t)$ %for $P \in \mathcal{P}(\Z^d)$ 
to find the polynomial
\begin{align*}
    L_{\relint (\Delta_{h})} (t) =  \frac{h}{6}t^3 - t^{2} +\left(2 - \frac{h}{6} \right)t -1.
\end{align*}
%, используя свойство взаимности для $P\in \mathcal{P} (\mathbb{Z}^{d}) $:
%\begin{align*}
%L_P(-t) = (-1)^{dimP}L_{\relint (P)}(t)
%\end{align*}
Thus, 
\begin{align*}
G_{1, \Delta_{h}}(t) = \frac{h}{6}t^3 + t^2 + \left( S - \frac{h}{6} \right) t. 
\end{align*}
The proof is complete.
%It is well known that $S < \frac{1}{2} $ (see ~\cite[Proposition ~ 11]{b09positivity}). Therefore, $S - \frac{h}{6} < 0$  if $h \geq 3$. So, for the Reeve's tetrahedron $\Delta_h$ with $h \geq 3$ the Grassmann polynomials have negative linear coefficients. %\cite{ ,} by  below, we conclude that S − h6 < 0 if h ≥ 3. 
\end{proof}

\subsection{Theorem~\ref{1206}: Grassmann angle valuations are not combinatorially positive}\label{1207}
%\begin{st}
%The Grassmann valuation $G_{m,P} := \slim_{v \in P\cap\mathbb{Z}^d} \alpha_{m}(T_{v}(P)) $ is not combinatorially positive for $0 \leq m \leq d-2$.
%\end{st} 
\begin{proof}
%By %Theorem \ref{Katharina}
As noted in the introduction, it suffices to prove that  there is a simplex $\Delta_{0} \in \mathcal{P} (\mathbb{Z}^d) $ such that $G_{k}(\relint(\Delta_{0})):=
     \slim_{F \in \mathcal{F} (\Delta_{0}) }
(-1)^{\dim \Delta_{0} - \dim F} G_{k}(F) < 0 $.
%for all simplices $\Delta \in \mathcal{P} (\mathbb{Z}^d) $.
Let $\Delta \in \mathcal{P} (\mathbb{Z}^d)$ be an arbitrary simplex.\\
From the definition of $G_{k}(\relint(\Delta))$ and Theorem \ref{Crofton} it follows that
%\begin{align*}
  %G_{m,P} := \slim_{v \in P\cap\mathbb{Z}^d} \alpha_{m}(T_{v}(P)) 
%\end{align*}
\begin{align*}
    G_{k}(\relint(\Delta)):&=
     \slim_{F \in \mathcal{F} (\Delta) }
(-1)^{\dim \Delta - \dim F} G_{k}(F)
\\
&=  \slim_{F\in \mathcal{F} (\Delta) }
(-1)^{\dim \Delta - \dim F} \slim_{v \in F\cap\mathbb{Z}^d} \alpha_{k}(T_{v}(F)) 
\\
&= \slim_{F\in \mathcal{F} (\Delta) }
(-1)^{\dim \Delta - \dim F} \slim_{v \in F\cap\mathbb{Z}^d} \sum_{i\geq 1}  \upsilon_{k+i} (T_{v}(F)).
\end{align*}
By Fubini's theorem we have
\begin{align*}
     \slim_{F \in \mathcal{F} (\Delta) }
(-1)^{\dim \Delta - \dim F} &\slim_{v \in F\cap\mathbb{Z}^d} \sum_{i\geq 1}  \upsilon_{k+i} (T_{v}(F))
\\
&= \slim_{v \in \Delta\cap\mathbb{Z}^d}  \sum_{i\geq 1}  \slim_{F \in \mathcal{F} (\Delta) : v\in \mathbb{Z}^d \cap F }
(-1)^{\dim \Delta - \dim F} \upsilon_{k+i} (T_{v}(F)).
\end{align*}
%Further, we need the result from the article by Amelunxen, Dennis ; Lotz, Martin
%Intrinsic volumes of polyhedral cones: a combinatorial perspective. (English
%summary)
%\begin{cor}
%Let $C\subseteq\mathbb{R}^d$ be a closed convex cone. Then
%\begin{align*}
 % (-1)^k \upsilon_k (C) = \sum_{F\in \mathcal{F} (C)} (-1)^{\dim F} \upsilon_k (F),
  %\end{align*}
 % where $\mathcal{F}(C)$ - the set of all faces of the cone $C$.
%\end{cor}
%By this corollary 
Using \eqref{intrinsic1} and the fact that $T_{v}(F)$ are the faces of the cone $T_{v}(\Delta)$, we rewrite the sums in the following form:
\begin{gather*}
\slim_{F \in \mathcal{F} (\Delta) : v\in \mathbb{Z}^d \cap F }
(-1)^{\dim \Delta- \dim F} \upsilon_{k+i} (T_{v}(F)) 
\\
=(-1)^{\dim \Delta} \slim_{F \in \mathcal{F} (\Delta) : v\in \mathbb{Z}^d \cap F }
(-1)^{\dim F} \upsilon_{k+i} (T_{v}(F))
\\
= (-1)^{\dim \Delta}(-1)^{k+i}  \upsilon_{k+i} (T_{v}(\Delta)).
\end{gather*}
Therefore,
\begin{align*}
G_{k}(\relint(\Delta)):&= 
\slim_{F \in \mathcal{F} (\Delta) }
(-1)^{\dim \Delta - \dim F} G_{k}(F) 
\\
&= (-1)^{\dim \Delta}\slim_{v \in \Delta\cap\mathbb{Z}^d}  \sum_{i\geq 1} (-1)^{k+i}  \upsilon_{k+i} (T_{v}(\Delta)).
\end{align*}
%By Crofton's formula:
Now, by Crofton's formula (\ref{1818}), 
\begin{gather*}
\sum_{i\geq 1} (-1)^{k+i}  \upsilon_{k+i} (T_{v}(\Delta)) \\
=
\begin{cases}
       \frac{1}{2} (-1)^{k+1} ( \gamma_{k}(T_{v}(\Delta)) - \gamma_{k+1}(T_{v}(\Delta)) )  & \text{if } T_{v}(\Delta) \text{ is not a linear subspace,}
        \\
        (-1)^{\dim \Delta} & \text{if  } T_{v}(\Delta) \text{ is a linear subspace, } k < \dim \Delta,
        \\
        0 & \text{if } T_{v}(\Delta) \text{ is a linear subspace, }  k \geq  \dim \Delta .
        \end{cases}
\end{gather*}
%Hence,
Combining this with the fact that $T_{v}(\Delta)$ is a linear subspace if and only if $v \in \relint(\Delta) $, we get
\begin{gather*}
G_{k}(\relint(\Delta)) = (-1)^{\dim \Delta}\slim_{v \in (\Delta\setminus(\relint(\Delta))\cap\mathbb{Z}^d}  \frac{1}{2} (-1)^{k+1} \left( \gamma_{k}(T_{v}(\Delta)) - \gamma_{k+1}(T_{v}(\Delta)) \right)
\\
+ (-1)^{\dim \Delta}
\begin{cases}
        \slim_{v \in \relint(\Delta)\cap\mathbb{Z}^d}  (-1)^{\dim \Delta}  & \text{if  } T_{v}(\Delta) \text{ is a linear subspace, } k < \dim \Delta
        \\
         \slim_{v \in \relint(\Delta)\cap\mathbb{Z}^d} 0 & \text{if } T_{v}(\Delta) \text{ is a linear subspace, }  k \geq  \dim \Delta 
        \end{cases}
\\
= (-1)^{\dim \Delta +k+1}\frac{1}{2}\slim_{v \in (\Delta\setminus(\relint(\Delta))\cap\mathbb{Z}^d}   ( \gamma_{k}(T_{v}(\Delta)) - \gamma_{k+1}(T_{v}(\Delta)) ) 
\\
+ \begin{cases} 
L(\relint(\Delta)) & \text{if  } T_{v}(\Delta) \text{ is a linear subspace, } k < \dim \Delta
        \\
         0 & \text{if } T_{v}(\Delta) \text{ is a linear subspace, }  k \geq  \dim \Delta.
        \end{cases}
\end{gather*}
%Next, we use the fact that for any convex cone $C \subset R^d $ with $C \neq \{0\}$:
Further, as we mentioned in Subsection \ref{Convex cones}, for any convex cone $C \subseteq \R^d $ with $C \neq \{0\}$ we have:
$$1 = \gamma_0(C) \geq  \gamma_1(C) \geq  \ldots \geq   \gamma_d(C) = 0.$$
%Since $L_{\relint(\Delta) } \geq 0$, it follows that if $dim \Delta $ and $m$ are of different parity, then the $G_{m,\relint(\Delta)}$ is nonnegative for  simplex $\Delta \in \mathcal{P} (\mathbb{Z}^{d})$.
Since we can find a simplex $\Delta_{0}\in \mathcal{P} (\mathbb{Z}^{d})$ such that $\dim \Delta_0$ and $k \in \{0,\ldots,d-2\}$  have the same parity, $L(\relint(\Delta_0))  = 0$ and $\gamma_{k}(T_{v}(\Delta_0))> \gamma_{k+1}(T_{v}(\Delta_0))$ for some $v\in \Delta_0\setminus\relint(\Delta_{0})$, %and such that $dim \Delta $ and $m$  have the same parity,
it follows that the $G_{k}(\relint(\Delta_{0}))$ is negative for the simplex $\Delta_{0} $.
Thus, condition \rm(iii) (see Subsection \ref{Positive valuations}) %Theorem \ref{Katharina} 
does not hold for $\Delta_{0} $, which proves our claim.
\end{proof}
%Let us consider two special cases. \\
\begin {rem} Let us consider case $k=d-1$. In this case, $G_{d-1}(\relint(\Delta))$ coincides with $ G_{d-1}(\Delta) = A(\Delta)$ %for full-dimensional $\Delta$ (if $\dim \Delta < d$, then $G_{d-1}(\Delta)=0$) 
and hence $G_{d-1}(\cdot) $ is combinatorially positive:
\begin{align*}
G_{d-1}(\relint(\Delta)):&= 
\slim_{F \in \mathcal{F} (\Delta) }
(-1)^{\dim \Delta  -\dim F}G_{d-1}(F)
\\
&= (-1)^{\dim \Delta }\slim_{v \in \Delta\cap\mathbb{Z}^d}  \sum_{i\geq 1} (-1)^{d-1+i}  \upsilon_{d-1+i} (T_{v}(\Delta)) 
\\
&= (-1)^{\dim \Delta  + d}\slim_{v \in \Delta\cap\mathbb{Z}^d}  \upsilon_{d} (T_{v}(\Delta)) 
\\
&= (-1)^{\dim \Delta  + d}\slim_{v \in \Delta\cap\mathbb{Z}^d} \alpha_{d-1} (T_{v}(\Delta))%\begin{cases}\\
         %0 & \text{if } T_{v}(\Delta) \text{ is a linear subspace, }  m \geq  dim \Delta.
        %\end{cases} 
        =G_{d-1}(\Delta) = A(\Delta).
\end{align*} 
Here the fifth equality follows from the fact that $\alpha_{d-1}(T_{v}(\Delta)) \neq 0 $ if and only if $\dim \Delta = d$.
\end{rem}

\section{Acknowledgments}
The author is grateful to Dmitry Zaporozhets and Anna Gusakova for helpful discussions and valuable remarks.%constant attention to
%this work.

\bibliographystyle{plain}
\bibliography{diploma}
\vspace{1cm}
\author{Mariia Dospolova
\\
Leonhard Euler International Mathematical Institute\\
Russia\\ 
email \href{mailto:me@somewhere.com}{\color{blue}dospolova.maria@yandex.ru} }
\end{document}